\theoremstyle{plain}
\newtheorem{teo}{Theorem}[section]
\newtheorem{propo}[teo]{Proposition}
\newtheorem{lem}[teo]{Lemma}
\newtheorem{cor}[teo]{Corollary}
\theoremstyle{definition}
\newtheorem{defin}[teo]{Definition}
\newtheorem{ej}[teo]{Example}
\newtheorem{obs}[teo]{Remark}
 \newtheorem{que}[teo]{Question}
\numberwithin{equation}{section}
\numberwithin{figure}{section}
\begin{document} 
\title{Structural Stability and a characterization of Anosov Families}
 
\author{Jeovanny de Jesus Muentes Acevedo}
\address{Grupo de Investigaci\'on Deartica, Universidad del Sin\'u\\ 
El\'ias Bechara Zain\'um\\
Cartagena, Colombia}  \email{jeovanny.muentes@unisinu.edu.co}
 


\begin{abstract}
Anosov families are non-stationary dynamical systems with hyperbolic behavior. Non-trivial
examples of Anosov families will be given in this paper. We show the existence of invariant manifolds, the structrural stability
and a characterization for a certain class of Anosov families.  
 \end{abstract}

 \subjclass[2010]{  	37D20;   37C75; 37B55}
\keywords{Anosov families, Anosov diffeomorphism,    structural stability, non-stationary dynamical systems, non-autonomous dynamical systems}

\maketitle
 
\section{Introduction}
 
 Anosov families were introduced by P. Arnoux and A. Fisher in \cite{alb}, motivated by generalizing the
notion of Anosov diffeomorphisms. Roughly, an Anosov family is a non-stationary dynamical system
 $ (f_{i})_{i\in\mathbb{Z}}$ defined on a sequence of compact Riemannian manifolds $(M_{i})_{i\in\mathbb{Z}}$, which has a similar behavior
to an Anosov diffeomorphisms (see Definition \ref{anosovfamily}). It is important to point out that there exist Anosov
families $(f_{i})_{i\in\mathbb{Z}}$  such that the $f_{i}$'s  are not necessarily Anosov diffeomorphisms (see \cite{alb}, Example 3).
Furthermore, the $M_{i}$'s, although they are diffeomorphic, they are not necessarily isometric, thus, the
hyperbolicity could be induced by the Riemannian metrics (see \cite{alb}, \cite{Jeo2}  for more detail).

\medskip

Let \( \mathcal{M}\) be the disjoint union of the family of Riemannian manifolds  $M_{i}$, for $i\in\mathbb{Z}$. For $m\geq1$, let      $\mathcal{D}^{m}(\mathcal{M})$  be the set consisting of the families of $C^{m}$-diffeomorphisms on $\mathcal{M}$, which is endowed      with the \textit{strong topology} (see  Section 2) or with the \textit{uniform topology}.     We denote by $\mathcal{A}^{m}(\mathcal{M})$ the subset of $\mathcal{D}^{m}(\mathcal{M})$ consisting of Anosov families and by $\mathcal{A}^{m}_{b}(\mathcal{M})$  the set consisting of $C^{m}$ Anosov families  with bounded second derivative and  such that the angles between the unstable and stable subspaces are bounded away from 0 (see Definition \ref{propang2}).  Young, in  \cite{young}, Proposition 2.2, proved that     families consisting of  $C^{1+1}$ random small perturbations of an Anosov diffeomorphism of class $C^{2}$ are   Anosov.  In \cite{Jeo1} we prove    for any $(f_{i})_{i\in\mathbb{Z}}$, there exists a sequence of positive numbers  $(\varepsilon_{i})_{i\in\mathbb{Z}}$ such that, if $(g_{i})_{i\in\mathbb{Z}}\in \mathcal{D}^{1}(\mathcal{M})$, such that  $g_{i}$ is $\varepsilon_{i}$-close to $f_{i}$ in the $C^{1}$-topology, then $(g_{i})_{i\in\mathbb{Z}}\in \mathcal{A}^{1}(\mathcal{M})$.   This fact means that $\mathcal{A}^{1}(\mathcal{M})$  is an open subset of $\mathcal{D}^{1}(\mathcal{M})$  endowed
with the strong topology. The most important implication of this result is the great variety of non-trivial examples that it provides (non-trivial examples of Anosov families can be found in \cite{alb} and \cite{Jeo2}, thus the result in \cite{Jeo1} proves that, in a certain way, these examples are not isolated),  since we only ask
that the family be Anosov and we do not ask for any additional condition. Considering the uniform
topology on $\mathcal{D}^{m}(\mathcal{M})$, we will prove that $\mathcal{A}^{2}_{b}(\mathcal{M})$ is an open subset of $\mathcal{D}^{2}(\mathcal{M})$, which generalizes the Young's result.  

\medskip

Non-stationary dynamical systems are classified by \textit{uniform conjugacy} (see  Definition \ref{definconjugacy}).  The notion of uniform conjugacy   is also known as
\textit{equi-conjugacy} in the literature (see, e.g., \cite{Kolyada}). This   conjugacy  is  also considered to characterize \textit{random dynamical systems} and \textit{non-autonomous dynamical systems} or \textit{time-dependient dynamical systems} (see  \cite{Jeo3}, \cite{Liu}).  Kolyada and Snoha introduced a notion of topological entropy   for non-stationary dynamical systems (see \cite{Kolyada}, \cite{Kolyada2}, \cite{Jeo3}). This entropy is invariant by uniform conjugacy. C. Kawan and Y. Latushkin, in \cite{KawanL}, gave formulas for the
entropy of \textit{non-stationary subshifts of finite type}, introduced by Fisher and Arnoux in \cite{alb}, which code non-stationary dynamical systems, and in particular Anosov families. In \cite{Jeo3} we proved that for any sequence of $C^{1}$-diffeomorphisms $(f_{i})_{i\in\mathbb{Z}}$, there exists a sequence of positive numbers $(\delta_{i})_{i\in\mathbb{Z}}$ such that if $g_{i}$ is a $C^{1}$-diffeomorphism which is $\delta_{i}$-close to $f_{i}$ in the $C^{1}$-topology, then $(f_{i})_{i\in\mathbb{Z}}$ and $(g_{i})_{i\in\mathbb{Z}}$ have the same topological entropy.

 \medskip
 
 If $E$ is a vector bundle over a compact Riemannian manifold $M$, set $\Gamma(E)$ the Banach $  \mathbb{R}$-vector
space of continuous sections of $E$ over $M$, endowed with the $C^{0}$-topology. An automorphism $L :
E \rightarrow E$ is called \textit{hyperbolic} if 
$\sigma(L) \cap \mathbb{S}^{1} =\emptyset$, where $\sigma(L) $ is the spectrum of $L$. For $p \in M$, let $E_{p}$
denote the fiber of $E$ over $p$. For a diffeomorphism $f$ on $M$, let $f_{\ast} : \Gamma(T M) \rightarrow \Gamma(T M)$ be the bounded
linear operator given by $f_{\ast} (\zeta) = D f \circ \zeta \circ f ^{-1}$, where $T M$ is the tangent bundle over $M$. J. Mather in
\cite{Mather1}, \cite{Mather12} proved that $f$ is an Anosov diffeomorphism if and only if $f_{\ast}$ is a hyperbolic automorphism.
We can define the operator $f_{\ast}$ for non-stationary dynamical systems. In this case, that operator could be unbounded or non-hyperbolic (see Section 4). We will give some conditions on an Anosov
family to obtain the hyperbolicity of $f_{\ast}$ defined for the family (see Theorem \ref{characte}).

\medskip

 Structural stability for non-stationary dynamical systems with respect to the uniform topology
on $\mathcal{D}^{m}(\mathcal{M})$ will be defined in Section 2 (see Definition \ref{estruturalmenteestavel}).    A. Castro, F. Rodrigues and P. Varanda, in \cite{Acastro}, Theorem 2.3, proved the stability of  sequences of Anosov diffeomorphisms which are  small perturbations of a fixed Anosov diffeomorphism. Liu in  \cite{Liu}, Theorem 1.1,  proved this same fact for the random case.  In this work we
will show that $\mathcal{A}^{2}_{b}(\mathcal{M})$ is structurally stable in $\mathcal{D}^{1}(\mathcal{M})$, which generalizes the above results, since the elements in $\mathcal{A}^{2}_{b}(\mathcal{M})$ are not necessarily small perturbations of a fixed diffeomorphism. 

\medskip

In the next section we will define the class of objects to be studied in this work: Anosov families.
 Furthermore, we will introduce the strong and uniform topologies on $\mathcal{D}^{m}(\mathcal{M})$ and the uniform conjugacy to classify non-stationary dynamical systems. In Section 3 we will show some results that
provide a great variety of examples of Anosov families. Another examples and properties of Anosov
families can be found in \cite{alb},  \cite{Jeo2} and \cite{Jeo3}. A characterization of Anosov families will be given in Section
4, which generalizes the characterization of J. Mather in \cite{Mather1} and \cite{Mather12} for Anosov diffeomorphisms. 
 In Section 5 we will  prove the openness of $\mathcal{A}^{2}_{b}(\mathcal{M})$ in $\mathcal{D}^{2}(\mathcal{M})$ with respect to the uniform topology.
We will see in Section 6 that each family in $\mathcal{A}^{2}_{b}(\mathcal{M})$ admits stable and unstable at every point of $\mathcal{M}$.    Finally, the structural stability of  $\mathcal{A}^{2}_{b}(\mathcal{M})$ will be proved in Section 7.

  \section{Anosov families}

In this work we will consider  a sequence of Riemannian manifolds   $M_{i}$   with fixed Riemannian metric $\langle \cdot, \cdot\rangle_{i}$ for $i\in \mathbb{Z}$. Consider the  \textit{disjoint union}  $$\mathcal{M}=\coprod_{i\in \mathbb{Z}}{M_{i}}=\bigcup_{i\in \mathbb{Z}}{M_{i}\times{i}}.$$ 
   $\mathcal{M}$ will be endowed with the Riemannian metric $\langle \cdot, \cdot\rangle$    induced by  $\langle \cdot, \cdot\rangle_{i} $, setting 
\begin{equation}\label{metricariemannaian} 
\langle \cdot, \cdot\rangle|_{M_{i}}=\langle \cdot, \cdot\rangle_{i} \quad\text{ for }i\in \mathbb{Z}.
\end{equation} 
  
 We denote by $\Vert \cdot\Vert_{i}$ the induced norm by  $\langle\cdot,\cdot\rangle_{i}$ on $TM_{i}$ and we will take   $\Vert \cdot \Vert$ defined on  $\mathcal{M}$  as    $\Vert \cdot\Vert|_{M_{i}}=\Vert \cdot\Vert_{i} $ for $i\in \mathbb{Z}$. If $d_{i}(\cdot,\cdot)$ is the metric on $M_{i}$ induced by $\langle \cdot, \cdot\rangle_{i}$, then $\mathcal{M}$ is endowed with the metric
\begin{equation*}\label{primerametric} 
d(x,y)=    
        \begin{cases}
        \min\{1,d_{i}(x,y)\} & \mbox{if }x,y\in M_{i} \\
        	1  & \mbox{if }x\in M_{i}, y\in M_{j} \mbox{ and }i\neq j.  \\
        \end{cases} 
\end{equation*}

\begin{defin}\label{leidecomposicao} A  \textit{non-stationary dynamical system}  $(\mathcal{M},\langle\cdot,\cdot\rangle, \mathcal{F})$ (or \textit{n.s.d.s}, for short)  is a map $\mathcal{F}:\mathcal{M}\rightarrow \mathcal{M}$, such that, for each $i\in\mathbb{Z}$, $\mathcal{F}|_{M_{i}}=f_{i}:M_{i}\rightarrow M_{i+1}$ is a  diffeomorphism. Sometimes we use the notation   $\mathcal{F}=(f_{i})_{i\in\mathbb{Z}}$. The composition law is defined   to be   
\begin{equation*}
\mathcal{F}_{i} ^{\, n}:= 
\begin{cases}
f_{i+n-1}\circ \cdots\circ f_{i}:M_{i}\rightarrow M_{i+n}  & \mbox{if }n>0 \\
  f_{i+n}^{-1}\circ \cdots\circ f_{i-1}^{-1}:M_{i}\rightarrow M_{i+n} & \mbox{if } n<0  \\
	 I_{i}:M_{i}\rightarrow M_{i}  & \mbox{if } n=0,\\
 \end{cases} 
\end{equation*}
where $I_{i}$ is the identity on $M_{i}$  (see Figure \ref{Toros12}). 
\begin{figure}[h]
\begin{center}
\begin{tikzpicture}
\tikzstyle{point}=[circle,thick,draw=black,fill=black,inner sep=0pt,minimum width=1pt,minimum height=1pt]
\newcommand*{\xMin}{0}%
\newcommand*{\xMax}{6}%
\newcommand*{\yMin}{0}%
\newcommand*{\yMax}{6}%

\draw (-6.3,0.2) node[below] {\small \dots};
  
\draw (-6,0) .. controls (-5.9,0.8) and (-3.1,0.8) .. (-3,0);
\draw (-5.86,-0.01) .. controls (-5.85,0.6) and (-3.15,0.6) .. (-3.14,-0.01);
\draw (-6,0) .. controls (-5.9,-0.8) and (-3.1,-0.8) .. (-3,0);
\draw (-5.88,0.05) .. controls (-5.85,-0.6) and (-3.15,-0.6) .. (-3.12,0.05);
\draw (-4.5, -0.9) node[below] {\small $M_{i-1}$};

\draw (-2.5,0.45) node[below] {\small $\xrightarrow{f_{i-1}}$};

\draw (-2,0) .. controls (-2.05,0.8) and (0.1,0.8) .. (0,0);
\draw (-2,0) .. controls (-2.1,-0.8) and (0.1,-0.8) .. (0,0);
\draw (-1.5,-0.029) .. controls (-1.31,0.2) and (-0.7,0.2) .. (-0.5,-0.03);
\draw (-1.55,0.05) .. controls (-1.45,-0.2) and (-0.55,-0.2) .. (-0.45,0.05);

\draw (-1, -0.9) node[below] {\small $M_{i}$};

\draw (0.5,0.45) node[below] {\small  $\xrightarrow{\,\, \, f_{i}\, \, \,}$};

 \draw (1,0) .. controls (1,-0.7) and (3.5,-1.5) .. (3.5,0);
 \draw (1.2,0.1) .. controls (1,-0.3) and (3,-0.1) .. (3,0.4);

 \draw (1,0) .. controls (1,0.7) and (3.5,1.5) .. (3.5,0);
 \draw (1.2,0) .. controls (1.1,0.3) and (3,0.5) .. (2.8,0.15);
\draw (2.5, -0.9) node[below] {\small $M_{i+1}$};

 \draw (3.9,0.2) node[below] {\small \dots};
\end{tikzpicture}
\end{center}
\caption{A non-stationary dynamical system on a sequence of 2-torus with different Riemannian metrics.}
\label{Toros12}
\end{figure}
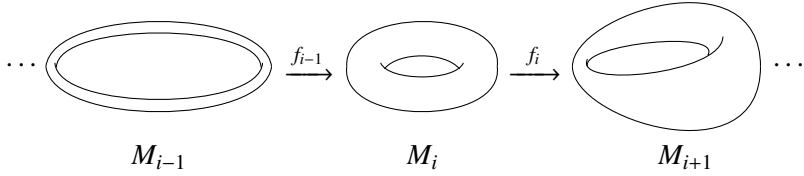
\end{defin}

We use the notation $(\mathcal{M}, \langle\cdot,\cdot\rangle,\mathcal{F})$, to indicate that we are fixing the Riemannian metric given in \eqref{metricariemannaian}.

\medskip

The study of time-depending dynamical systems is known in the literature with several different
names: \textit{non-stationary dynamical systems}, \textit{non-autonomous dynamical systems}, \textit{sequences of mappings}, among other names (see  \cite{alb}, \cite{Acastro}, \cite{Kawan}, \cite{KawanL}, \cite{Jeo3}, \cite{Jeo1} and the references there in).

\medskip

 Let $X_{1}$ and $X_{2}$ be $n$-dimensional compact Riemannian manifolds. We will introduce the metric on the spaces  $$\text{Diff}^{m}(X_{1},X_{2})=\{ \phi: X_{1}\rightarrow X_{2}: \phi \text{ is a } C^{m}\text{-diffeomorphism}\}\quad \text{ for }m=0,1,2.$$    Let $\Vert \cdot\Vert_{k}$   be the Riemannian norm on $X_{k}$ and $h_{k}: X_{k}\rightarrow \mathbb{R}^{p} $ be a $C^{\infty}$ isometric embedding, for $k=1,2$ and $p\in\mathbb{N}$ large enough.   
  We choose  a system of exponential charts $\{  (U_{x_{i}},\text{exp}_{x_{i}})\}_{i=1}^{l_{1}} $ and $\{  (V_{y_{i}},\text{exp}_{y_{i}})\}_{i=1}^{l_{2}}$ which covers $X_{1}$ and $X_{2}$ respectively, where $U_{x_{i}}$ is an open subset of $T_{x_{i}}X_{1}$ and $V_{y_{i}}$ is an open subset of  $T_{y_{i}}X_{2}$ with diameter less than $\varrho/2$, where $\varrho$ is an injectivity radius of $X_{2}$. We can identify isometrically $U_{x_{i}}$ and $V_{y_{i}}$  with open subsets of $\mathbb{R}^{n}$ and, by an abuse on the notation, we will continue calling them by $U_{x_{i}}$ and $V_{y_{i}}$. 
  
Let $d_{k}(\cdot,\cdot)$ be the metric   induced by $\Vert \cdot\Vert_{k}$ on $X_{k}$, for $k=1,2.$ Consider    two  homeomorphisms $\phi: X_{1}\rightarrow X_{2}$ and $\psi: X_{1}\rightarrow X_{2}$. 
 The $d^{0}$ metric on  $\text{Diff}^{0}(X_{1},X_{2})$ is given by
\begin{equation}\label{edssc1} d^{0}(\phi, \psi)=\max\left\{\max_{x\in X_{1}}d_{2}(\phi(x),\psi(x)),\max_{y\in X_{2}}d_{1}(\phi^{-1}(y),\psi^{-1}(y))\right\}.\end{equation}
 If $\varphi\in \text{Diff}^{0}(X_{1},X_{2})$,   take 
\[ \tilde{\varphi}_{x_{i}}= h_{2} \circ \varphi \circ \text{exp}_{x_{i}}  \text{ for }i=1,\dots,l_{1} \quad\text{and}\quad  \hat{\varphi}_{y}= h_{1} \circ \varphi^{-1} \circ \text{exp}_{y_{i}}   \text{ for } i=1,\dots,l_{2}.  \]
 The $d^{1}$ metric on  $\text{Diff}^{1}(X_{1},X_{2})$ is given by 
\begin{equation}\label{edssc2} d^{1}(\phi, \psi)=d^{0}(\phi, \psi) +\max_{1\leq i\leq l_{1}}\sup_{u\in U_{x_{i}}}\Vert D_{u}(\tilde{\phi}_{x_{i}}) - D_{u}(\tilde{\psi}_{x_{i}})\Vert+\max_{1\leq i\leq l_{2}}\sup_{v\in V_{y_{i}}}\Vert D_{v}(\hat{\phi}_{y_{i}}) - D_{v}(\hat{\psi}_{y_{i}})\Vert,\end{equation}
where $D_{w}\varphi$ is  the derivative of any diffeomorphism  $\varphi$  at $w\in \mathbb{R}^{n}$. 
If $\phi$ and $\psi$ are   $C^{2}$-diffeomorphisms, the $d^{2}$ metric   is given by 
\begin{equation}\label{edssc3}  d^{2}(\phi, \psi)=d^{1}(\phi, \psi)  +\max_{1\leq i\leq l_{1}}\sup_{u\in U_{x_{i}}}\Vert D_{u}^{2}(\tilde{\phi}_{x_{i}}) - D_{u}^{2}(\tilde{\psi}_{x_{i}})\Vert+\max_{1\leq i\leq l_{2}}\sup_{v\in V_{y_{i}}}\Vert D_{v}^{2}(\hat{\phi}_{y_{i}}) - D_{v}^{2}(\hat{\psi}_{y_{i}})\Vert,\end{equation}
where $D_{w}^{2}\varphi $ is  the second derivative of any $C^{2}$-map $\varphi$  at $w\in \mathbb{R}^{n}$.

\medskip

Fix $m\geq 1$. The set   \[\mathcal{D}^{m}(\mathcal{M})=\{\mathcal{F}=(f_{i})_{i\in\mathbb{Z}}: f_{i}:M_{i}\rightarrow M_{i+1} \text{ is a  }C^{m}\text{-diffeomorphism}\}\] 
 can be endowed with the \textit{strong topology} (or \textit{Whitney topology}) or the \textit{uniform topology}:

 \begin{defin}\label{weds}
 Given  $\mathcal{F}=(f_{i})_{i\in\mathbb{Z}}$ and  $\mathcal{G}=(g_{i})_{i\in\mathbb{Z}}$ in  $ \mathcal{D}^{m}(\mathcal{M})$, take 
 \[ d_{unif}^{m}(\mathcal{F},\mathcal{G})=\sup_{i\in\mathbb{Z}} \{\min \{d^{m}(f_{i},g_{i}),1)\}\},\]
 where $ d ^{m}(\cdot,\cdot)$ is the $C^{m}$-metric on $\text{Diff}^{m}(M_{i},M_{i+1})$ (see \eqref{edssc1}, \eqref{edssc2} and \eqref{edssc3}).  The \textit{uniform topology} on $ \mathcal{D}^{m}(\mathcal{M})$ is induced by $ d_{unif}^{m}(\cdot,\cdot)$. We denote by $\tau_{unif}$ the uniform topology on $ \mathcal{D}^{m}(\mathcal{M})$. 
 \end{defin}

 \begin{defin} For each   $\mathcal{F}=(f_{i})_{i\in\mathbb{Z}}\in  \mathcal{D}^{m}(\mathcal{M})$ and a sequence of positive numbers   $ (\epsilon_{i})_{i\in \mathbb{Z}}$, a  \textit{strong basic neighborhood} of $\mathcal{F}$ is the set $$B^{m}(\mathcal{F}, (\epsilon_{i})_{i\in \mathbb{Z}})=  \{\mathcal{G}\in \mathcal{D}^{m}(\mathcal{M}): d^{m}(f_{i},g_{i})<\epsilon_{i}\text{ for all }i\}.$$ 
  The $C^{m}$-\textit{strong topology}   is generated by the strong basic neighborhoods  of each $\mathcal{F}\in \mathcal{D}^{m}(\mathcal{M})$.\end{defin}

  A sequence   $(h_{i})_{i\in\mathbb{Z}}$, where $h_{i}:M_{i}\rightarrow M_{i}$ is a continuous maps for any $i\in\mathbb{Z}$, is equicontinuous if for any $\varepsilon>0$ there exists $\delta>0$ such that for any $i\in\mathbb{Z}$, if $x,y\in M_{i}$ with $d_{i}(x,y)<\delta$ then $d_{i}(h_{i}(x),h_{i}(y))<\epsilon$.  Non-stationary dynamical systems are classified via  \textit{uniform topological conjugacy}:

\begin{defin}\label{definconjugacy} A \textit{uniform topological conjugacy} between  two non-stationary systems $(\mathcal{M},\langle\cdot,\cdot\rangle,\mathcal{F})$  and  $(\mathcal{M},\langle\cdot,\cdot\rangle,\mathcal{G})$   
is a map $\mathcal{H}:\mathcal{M}\rightarrow \mathcal{M}$, such that, for  each $i\in \mathbb{Z} ,$ $\mathcal{H}|_{M_{i}}=h_{i}:M_{i}\rightarrow M_{i}$ is a homeomorphism, $(h_{i})_{i\in\mathbb{Z}}$  and $(h_{i}^{-1})_{i\in\mathbb{Z}}$ are equicontinuous and     \[h_{i+1}\circ f_{i}=g_{i}\circ h_{i}:M_{i}\rightarrow M_{i+1},\]   that is, the following diagram commutes: 
\[\begin{CD}
M_{-1}@>{f_{-1}}>> M_{0}@>{f_{0}}>>M_{1}@>{f_{1}}>>M_{2} \\
@V{\cdots}V{h_{-1}}V @VV{h_{0}}V @VV{h_{1}}V @VV{h_{2}\cdots}V\\
M_{-1}@>{g_{-1}}>> M_{0}@>{g_{0}}>>M_{1} @>{g_{1}}>>M_{2}
\end{CD} 
\]
In that case, we will say the families are \textit{uniformly conjugate}. 
\end{defin}

The reason for considering uniform conjugacy instead of topological conjugacy is that every non-stationary dynamical system is topologically conjugate to the constant family whose maps are all the
identity (see \cite{alb}, Proposition 2.1).

\medskip

The structural stability for elements in $\mathcal{D}^{m}(\mathcal{M})$ will be given considering the uniform topology on $ \mathcal{D}^{m}(\mathcal{M})$ (see Definition \ref{weds}).  

\begin{defin}\label{estruturalmenteestavel} We say that $\mathcal{F}\in \mathcal{D}^{m}(\mathcal{M})$ is \textit{uniformly structurally stable} if there exists $\varepsilon>0$ such that any  $\mathcal{G}\in \mathcal{D}^{m}(\mathcal{M})$,  with    $d_{unif}^{m}(\mathcal{F},
\mathcal{G})<\varepsilon$, is  uniformly  conjugate to $\mathcal{F}$. 
  A subset $ \mathcal{A}$ is called \textit{uniformly structurally stable} if all the elements in $ \mathcal{A}$ are uniformly structurally stable. 
\end{defin}

 Next,  the definition of Anosov families will be given.   It is important to keep fixed the Riemannian metric   on each $M_{i}$, since the notion of Anosov family depends on the Riemannian metric. The   hyperbolicity for the family could be induced by the Riemannian metrics $\langle \cdot, \cdot \rangle_{i}$ (see \cite{alb}, Example 4).

 \begin{defin}\label{anosovfamily}    An  \textit{Anosov family} on $\mathcal{M}$ is  a non-stationary dynamical system     $(\mathcal{M},\langle\cdot,\cdot\rangle, \mathcal{F})$ such that:
\begin{enumerate}[i.]
\item the tangent bundle $T\mathcal{M}$ has a continuous splitting   $E^{s}\oplus E^{u}$ which is  $D\mathcal{F}$-\textit{invariant}, i. e., for each $p\in \mathcal{M}$, 
 $T_{p}\mathcal{M}=E^{s}_{p}\oplus E^{u}_{p}$ with $D_{p}\mathcal{F}(E^{s}_{p})= E^{s}_{\mathcal{F}(p)}$ and $D_{p} \mathcal{F}(E^{u}_{p})= E^{u}_{\mathcal{F}(p)}$, where $T_{p}\mathcal{M}$ is the    tangent space at $p;$
\item there exist constants $\lambda \in (0,1)$ and $c>0$ such that for each  $i\in \mathbb{Z}$, $n\geq 1$,    and $p\in M_{i}$, 
we have: \[\Vert D_{p} (\mathcal{F}_{i}^{n})(v)\Vert \leq c\lambda^{n}\Vert v\Vert \text{ if   }v\in E_{p}^{s}\quad\text{and}\quad \Vert D_{p} (\mathcal{F}_{i}^{-n}) (v)\Vert \leq c\lambda^{n}\Vert v\Vert  \text{ if }v\in E_{p}^{u}.\]
\end{enumerate}
The  subspaces $E^{s}_{p}$ and $E^{u}_{p}$ are called     stable and unstable subspaces, respectively.
\end{defin}
  
If we can take
$c=1$ we say the family is \textit{strictly Anosov}. 
 We will denote by $\mathcal{A}^{1}(\mathcal{M})$ the set consisting of Anosov family on $\mathcal{M}$ such that each $f_{i}$ is a $C^{1}$-diffeomorphism.

 \medskip
 
 In a natural way, Fisher and Arnoux in \cite{alb} generalized the notion of Markov partition to non-
stationary dynamical systems, whose associated symbolic representation is a combinatorially defined
two-sided sequence of maps, which they called a \textit{nonstationary subshift of finite type}. This is a key
tool for the further study of Anosov families.
 
 \begin{obs}\label{exponential}  Proposition 3.5 in \cite{Jeo1}      proves     the notion of    Anosov family does not depend on   Riemannian metrics chosen  uniformly equivalent on the total space $\mathcal{M}$.
  On the other hand,  taking  a fixed Riemannian manifold $M$ and considering $M_{i}=M$ for each $i\in\mathbb{Z}$, we can  suitably change the Riemannian metric on each   $M_{i}$ for each $i\in\mathbb{Z}$ such that the sequence $(I_{i})_{i\in\mathbb{Z}}$, where $I_{i}:M \rightarrow M$ is the identity map, has a hyperbolic behavior (see \cite{alb}, Example 4).  
We want to exclude that kind of cases.  Therefore, we will suppose that \[  \varrho := \inf_{i\in\mathbb{Z}}\varrho_{i}>0,\] where $\varrho_{i}$ is an     injectivity radius  of  $M_{i}$.\footnote{In \cite{alb}, Example 4, we have $\varrho=0.$} Consequently,  for each  $i\in\mathbb{Z}$ and $p\in M_{i}$, the exponential map  \(\text{exp}_{p}:B (0_{p},\varrho) \rightarrow B ( p,\varrho) \) is a diffeomorphism and  
\begin{equation*}\label{exponentialmap} \Vert v\Vert =d_{i}(\text{exp}_{p}(v), p),\quad  \text{for all  }v\in B (0_{p},\varrho)
,\end{equation*}
where   $B (0_{p},\varrho)$ is the ball in $T_{p}M_{i}$ with radius   $\varrho$ and center $0_{p}\in T_{p}M_{i}$, the zero vector in $T_{p}M_{i}$,   and $B(p,\varrho)$ is the ball in $M_{i}$  with  radius $\varrho$ and center $p$.\end{obs}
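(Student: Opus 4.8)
The only assertion in this remark that calls for an argument is the displayed identity together with the claim that $\exp_p$ is a diffeomorphism; the metric-independence statement in the first sentence is quoted directly from \cite{Jeo1} (Proposition 3.5) and requires nothing new here. So the goal is: under the standing hypothesis $\varrho=\inf_{i\in\mathbb{Z}}\varrho_i>0$, for each $i\in\mathbb{Z}$ and each $p\in M_i$ the map $\exp_p\colon B(0_p,\varrho)\to B(p,\varrho)$ is a diffeomorphism and $\|v\|=d_i(\exp_p(v),p)$ for every $v\in B(0_p,\varrho)$. The plan is to reduce both parts to the defining property of the injectivity radius and the Gauss Lemma, applied in each fiber $T_pM_i$.

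First I would fix $i$ and $p\in M_i$ and recall that, by the very definition of the injectivity radius $\varrho_i$, the exponential map $\exp_p$ restricts to a diffeomorphism of $B(0_p,\varrho_i)\subset T_pM_i$ onto the geodesic ball $B(p,\varrho_i)\subset M_i$. Since the hypothesis gives $0<\varrho\le\varrho_i$ for every $i$, restricting further to $B(0_p,\varrho)$ yields a diffeomorphism onto $B(p,\varrho)$, which is the first claim. The reason for assuming the infimum $\varrho$ to be strictly positive, rather than merely $\varrho_i>0$ for each separate $i$, is exactly to secure a single chart radius valid along the whole family; this is what fails in \cite{alb} (Example 4), where $\varrho=0$.

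For the identity I would, for $v\neq 0_p$, consider the radial geodesic $\gamma(t)=\exp_p(tv)$ on $[0,1]$, which has constant speed $\|\dot\gamma(t)\|\equiv\|v\|$ and therefore length $\|v\|$; since $\|v\|<\varrho\le\varrho_i$ the curve stays inside the injectivity ball, where the Gauss Lemma guarantees that such a radial geodesic minimizes length among curves from $p$ to $\exp_p(v)$, giving $d_i(\exp_p(v),p)=\|v\|$ (the case $v=0_p$ being trivial). I do not expect a genuine obstacle: these are standard Riemannian facts once $\varrho>0$ is in force. The only point deserving care is the uniformity — that the same $\varrho$ works for all $i$ and $p$ — which is precisely the content of the strict positivity of the infimum and is what separates this setting from the degenerate one of \cite{alb}.
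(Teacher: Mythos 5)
Your proposal is correct and follows exactly the reasoning the paper implicitly relies on: the remark offers no argument beyond ``Consequently,'' and the facts in question (that $\exp_p$ is a diffeomorphism on $B(0_p,\varrho)$ since $\varrho\le\varrho_i$, and that radial geodesics are minimizing within the injectivity ball, giving $\Vert v\Vert=d_i(\exp_p(v),p)$) are precisely the standard consequences of the definition of the injectivity radius and the Gauss Lemma that you invoke. Your added emphasis on why the \emph{uniform} lower bound $\varrho>0$ is the substantive hypothesis is consistent with the paper's own footnote contrasting this with Example 4 of \cite{alb}.
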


 \begin{defin}\label{propang2} An Anosov family satisfies the \textit{property of the angles} (or \textit{s.p.a.}) if the angle between the stable and unstable subspaces are bounded away from zero. 
 \end{defin}

  \begin{obs}\label{remarnormas} In \cite{Jeo1},  Corollary 3.8, we proved that if $\mathcal{F}$ s.p.a. then  there exists a Riemannian norm      \( \Vert \cdot\Vert_{\ast}\),  
  uniformly equivalent to $\Vert \cdot\Vert$ on $T\mathcal{M}$, with which         $ \mathcal{F}$ is strictly Anosov, with constant $\tilde{\lambda}\in(\lambda,1)$. 
   \( \Vert \cdot\Vert_{\ast}\) is uniformly equivalent to the norm     given by
 \begin{equation}\label{metriceq} \Vert (v_{s},v_{u})\Vert_{\star}=\max\{\Vert v_{s}\Vert_{\ast} ,\Vert v_{u}\Vert _{\ast}\},   \text{ for }(v_{s},v_{u})\in E_{p}^{s}\oplus E_{p}^{u}, \text{ } p\in \mathcal{M}.
 \end{equation} 
 Consequently, there exists $C\geq 1$ such that 
  \begin{equation}\label{metriceqe} (1/C)\Vert v\Vert_{\star}\leq \Vert v \Vert \leq C \Vert v\Vert_{\star} ,   \text{ for every }v\in E_{p}^{s}\oplus E_{p}^{u}, \text{ } p\in \mathcal{M}.
 \end{equation} 
 \end{obs}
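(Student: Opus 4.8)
The plan is to adapt J.~Mather's construction of an adapted (Lyapunov) norm to the non-stationary setting, the new ingredient being that every estimate must be uniform in the fiber index $i\in\mathbb{Z}$, which is exactly what the property of the angles will supply. Fix $\tilde\lambda\in(\lambda,1)$. Since $\tilde\lambda/\lambda>1$, I first choose an integer $N\geq1$ so large that $c\lambda^{N}\leq\tilde\lambda^{N}$. For $p\in M_{i}$ I then define an inner product $\langle\cdot,\cdot\rangle_{\ast}$ on $T_{p}M_{i}$ by declaring $E^{s}_{p}$ and $E^{u}_{p}$ to be orthogonal and setting, for $v,w\in E^{s}_{p}$,
\[
\langle v,w\rangle_{\ast}=\sum_{n=0}^{N-1}\frac{\langle D_{p}(\mathcal{F}_{i}^{\,n})(v),\,D_{p}(\mathcal{F}_{i}^{\,n})(w)\rangle}{\tilde\lambda^{2n}},
\]
and, for $v,w\in E^{u}_{p}$, the analogous sum with $\mathcal{F}_{i}^{\,n}$ replaced by $\mathcal{F}_{i}^{\,-n}$. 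Because the splitting $E^{s}\oplus E^{u}$ and $D\mathcal{F}$ are continuous, this defines a continuous Riemannian norm $\Vert\cdot\Vert_{\ast}$ on $T\mathcal{M}$ with $\Vert(v_{s},v_{u})\Vert_{\ast}^{2}=\Vert v_{s}\Vert_{\ast}^{2}+\Vert v_{u}\Vert_{\ast}^{2}$; in particular it is trivially equivalent (factor $\sqrt{2}$) to the box norm $\Vert\cdot\Vert_{\star}$ of \eqref{metriceq}.

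Next I would check the strict hyperbolicity estimates. For $v\in E^{s}_{p}$, using $D_{\mathcal{F}(p)}(\mathcal{F}_{i+1}^{\,n})\circ D_{p}\mathcal{F}=D_{p}(\mathcal{F}_{i}^{\,n+1})$ and a telescoping of the defining sum,
\[
\Vert D_{p}\mathcal{F}(v)\Vert_{\ast}^{2}=\tilde\lambda^{2}\Big(\Vert v\Vert_{\ast}^{2}-\Vert v\Vert^{2}+\tilde\lambda^{-2N}\Vert D_{p}(\mathcal{F}_{i}^{\,N})(v)\Vert^{2}\Big).
\]
The choice of $N$ forces $\Vert D_{p}(\mathcal{F}_{i}^{\,N})(v)\Vert\leq c\lambda^{N}\Vert v\Vert\leq\tilde\lambda^{N}\Vert v\Vert$, so the bracketed factor is at most $\Vert v\Vert_{\ast}^{2}$ and hence $\Vert D_{p}\mathcal{F}(v)\Vert_{\ast}\leq\tilde\lambda\Vert v\Vert_{\ast}$. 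Running the same computation for $\mathcal{F}^{-1}$ on $E^{u}$ gives the matching contraction in backward time, so $\mathcal{F}$ is strictly Anosov for $\Vert\cdot\Vert_{\ast}$ with constant $\tilde\lambda$ and $c=1$.

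It remains to prove the uniform equivalence of $\Vert\cdot\Vert_{\ast}$ and $\Vert\cdot\Vert$, and this is where I expect the real work. On each subspace the two norms are comparable with constants independent of $p$ and $i$: the $n=0$ term yields $\Vert v\Vert\leq\Vert v\Vert_{\ast}$, while $\Vert D_{p}(\mathcal{F}_{i}^{\,n})(v)\Vert\leq c\lambda^{n}\Vert v\Vert$ bounds the geometric sum, giving $\Vert v\Vert_{\ast}\leq c(1-(\lambda/\tilde\lambda)^{2})^{-1/2}\Vert v\Vert$ on $E^{s}$, and symmetrically on $E^{u}$. The genuinely non-stationary step is to compare the orthogonal combination $\Vert v_{s}\Vert^{2}+\Vert v_{u}\Vert^{2}$ with the actual $\Vert v_{s}+v_{u}\Vert^{2}$ in the original metric, and here s.p.a.\ enters: writing $\alpha_{0}=\inf_{p\in\mathcal{M}}\angle(E^{s}_{p},E^{u}_{p})>0$, the bound $|\langle v_{s},v_{u}\rangle|\leq\cos\alpha_{0}\,\Vert v_{s}\Vert\,\Vert v_{u}\Vert$ together with $2\Vert v_{s}\Vert\Vert v_{u}\Vert\leq\Vert v_{s}\Vert^{2}+\Vert v_{u}\Vert^{2}$ yields
\[
(1-\cos\alpha_{0})\big(\Vert v_{s}\Vert^{2}+\Vert v_{u}\Vert^{2}\big)\leq\Vert v_{s}+v_{u}\Vert^{2}\leq(1+\cos\alpha_{0})\big(\Vert v_{s}\Vert^{2}+\Vert v_{u}\Vert^{2}\big),
\]
with constants uniform over $\mathcal{M}$. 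Chaining the per-subspace comparison with this angle estimate produces a single $C\geq1$ satisfying \eqref{metriceqe}. The main obstacle is thus the uniformity itself: without the global lower bound $\alpha_{0}>0$ the projections $v\mapsto v_{s},v_{u}$ could have operator norm blowing up along the family and the equivalence constant would degenerate; the finite-sum construction is arranged precisely so that $N$, the per-subspace constant, and $\alpha_{0}$ are all chosen once and for all, independently of $i$.
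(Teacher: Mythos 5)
Your construction is correct: the finite Lyapunov sum with $c\lambda^{N}\leq\tilde\lambda^{N}$, the telescoping estimate giving the strict contraction $\Vert D_{p}\mathcal{F}(v)\Vert_{\ast}\leq\tilde\lambda\Vert v\Vert_{\ast}$, and the use of s.p.a.\ to make the comparison between $\Vert v_{s}+v_{u}\Vert$ and $\Vert v_{s}\Vert^{2}+\Vert v_{u}\Vert^{2}$ uniform in $i$ are exactly the standard Mather-style adapted-norm argument. The paper does not prove this remark itself but defers to \cite{Jeo1}, Corollary 3.8, which follows the same route, so your proposal matches the intended proof in both substance and structure.
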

 
   From now on, $\Vert \cdot \Vert_{\star}$  will denote the norm given in \eqref{metriceq}.

 \medskip

In \cite{Jeo1} we proved that  $ \mathcal{A}^{1} (\mathcal{M})$  is an open subset of $ \mathcal{F}^{1} (\mathcal{M})$ with respect to the strong topology. This fact 
means that if $( f_{i} )_{i\in\mathbb{Z}}$ is an Anosov family, then there exists a two-sided sequence of positive numbers
 $(\epsilon_{i})_{i\in\mathbb{Z}}$ such that if $( g_{i} )_{i\in\mathbb{Z}}\in  \mathcal{D}^{1} (\mathcal{M})$ and $d^{1} ( f_{i} , g_{i} ) < \epsilon_{i}$  for any $i\in\mathbb{Z}$, then  $( g_{i} )_{i\in\mathbb{Z}}\in   \mathcal{A}^{1} (\mathcal{M})$. If we do not
ask for any additional condition on the family $( f_{i} )_{i\in\mathbb{Z}}$, the sequence $(\epsilon_{i})_{i\in\mathbb{Z}}$  could not be bounded away
from zero.

\section{Some examples of Anosov families}

 It is clear that if  for each $i\in\mathbb{Z}$  $f_{i}$ is an fixed Anosov diffeomorphism $\phi:M\rightarrow M$, where $M$ is a compact Riemannian manifold with Riemannian metric $\langle\cdot,\cdot\rangle$, then $(f_{i})_{i\in\mathbb{Z}}$ is an Anosov family, considering $M_{i}=M\times \{i\}$ endowed with the metric induced by $\langle\cdot,\cdot\rangle$. Furthermore, if $\phi$ is $C^{2}$ and each $f_{i}$ is a $C^{1+1}$ small perturbation of $\phi$, then $(f_{i})_{i\in\mathbb{Z}}$ is an Anosov family (see \cite{young}, Proposition 2.2). In this section we will show some results provide many examples of Anosov families  which are not necessarily sequences of   Anosov diffeomorphisms  (or small perturbations of a single Anosov diffeomorphism).

\begin{defin}\label{gathering} Let     $\mathcal{F}$ and  $\widetilde{\mathcal{F}}$  be  non-stationary dynamical systems  on $\mathcal{M}$ and $\widetilde{\mathcal{M}}$, respectively.  We say that  $ \widetilde{\mathcal{F}}$ is a \textit{gathering} of  $ \mathcal{F} $ if there exists a strictly increasing  sequence   of integers $(n_{i})_{i\in\mathbb{Z}}$ such that $\widetilde{M}_{i}=M_{n_{i}}$ and $\widetilde{\mathcal{F}}_{i}=f_{n_{i+1}-1}\circ \cdots \circ f_{n_{i}+1}\circ f_{n_{i}}$:  
\begin{equation*}\begin{CD}
\cdots M_{n_{i-1}}@>{\tilde{f}_{i-1}=f_{n_{i}-1}\circ \cdots  \circ f_{n_{i-1}}}>> M_{n_{i}}@>{\tilde{f}_{i}=f_{n_{i+1}-1}\circ \cdots  \circ f_{n_{i}}}>>M_{n_{i+1}}   \cdots
\end{CD} 
\end{equation*}
\end{defin}

It is not difficult to prove that:

\begin{propo}\label{propogathe}  Any gathering of an Anosov family   is also an Anosov family.  
\end{propo}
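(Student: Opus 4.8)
The plan is to take the hyperbolic splitting of the gathering to be exactly the restriction of the original one, and then to reduce every estimate for $\widetilde{\mathcal{F}}$ to an estimate for $\mathcal{F}$ through a cocycle identity. The whole argument is a transfer of structure along the regrouping of indices, so the key is to set up the right bookkeeping and then observe that the exponential contraction is preserved.

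First I would record the composition identity that makes the gathering's dynamics a subsequence of the original one. Writing $\widetilde{\mathcal{F}}_i^{\,n}$ for the composition law of $\widetilde{\mathcal{F}}$ and using $\widetilde{f}_i=\mathcal{F}_{n_i}^{\,n_{i+1}-n_i}$, the telescoping of the original maps yields, for every $i\in\mathbb{Z}$ and every $n\in\mathbb{Z}$,
\[ \widetilde{\mathcal{F}}_i^{\,n}=\mathcal{F}_{n_i}^{\,n_{i+n}-n_i}\colon \widetilde{M}_i=M_{n_i}\longrightarrow M_{n_{i+n}}=\widetilde{M}_{i+n}. \]
This follows at once from the cocycle property $\mathcal{F}_{j+b}^{\,a}\circ\mathcal{F}_{j}^{\,b}=\mathcal{F}_{j}^{\,a+b}$ of the original family; the only point requiring attention is the index convention for $n<0$, where one composes the inverse maps $\widetilde{f}_{k}^{-1}=\mathcal{F}_{n_{k+1}}^{\,n_k-n_{k+1}}$ and again telescopes down to $\mathcal{F}_{n_i}^{\,n_{i+n}-n_i}$.

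Next I would define the candidate splitting for $\widetilde{\mathcal{F}}$. For $p\in\widetilde{M}_i=M_{n_i}$ set $\widetilde{E}^s_p=E^s_p$ and $\widetilde{E}^u_p=E^u_p$, where $E^s\oplus E^u$ is the $D\mathcal{F}$-invariant splitting of $\mathcal{F}$. Continuity of $p\mapsto\widetilde{E}^s_p,\widetilde{E}^u_p$ is inherited from that of the original splitting, since we are merely restricting it to the subfamily of manifolds $\{M_{n_i}\}_{i\in\mathbb{Z}}$. Invariance under $D\widetilde{\mathcal{F}}$ follows from the composition identity with $n=1$: because $D\mathcal{F}$ preserves $E^s$ and $E^u$, so does the composition $D_p\widetilde{f}_i=D_p\mathcal{F}_{n_i}^{\,n_{i+1}-n_i}$, whence $D_p\widetilde{f}_i(\widetilde{E}^s_p)=\widetilde{E}^s_{\widetilde{f}_i(p)}$ and likewise for the unstable bundle. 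This establishes condition (i) of Definition \ref{anosovfamily}.

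Finally I would transfer the hyperbolicity estimates. The decisive elementary observation is that, since $(n_i)_{i\in\mathbb{Z}}$ is a strictly increasing sequence of integers, one has $n_{i+n}-n_i\geq n$ and $n_i-n_{i-n}\geq n$ for every $n\geq1$; combined with $\lambda\in(0,1)$ this gives $\lambda^{\,n_{i+n}-n_i}\leq\lambda^{\,n}$. Thus, for $v\in\widetilde{E}^s_p$ with $p\in\widetilde{M}_i$,
\[ \Vert D_p(\widetilde{\mathcal{F}}_i^{\,n})(v)\Vert=\Vert D_p(\mathcal{F}_{n_i}^{\,n_{i+n}-n_i})(v)\Vert\leq c\,\lambda^{\,n_{i+n}-n_i}\Vert v\Vert\leq c\,\lambda^{\,n}\Vert v\Vert, \]
and symmetrically $\Vert D_p(\widetilde{\mathcal{F}}_i^{\,-n})(v)\Vert\leq c\,\lambda^{\,n_i-n_{i-n}}\Vert v\Vert\leq c\,\lambda^{\,n}\Vert v\Vert$ for $v\in\widetilde{E}^u_p$. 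Hence condition (ii) holds for $\widetilde{\mathcal{F}}$ with the \emph{same} constants $c$ and $\lambda$, and $\widetilde{\mathcal{F}}$ is an Anosov family. There is essentially no hard step here: the only places demanding care are the sign and index conventions in the composition law at negative times, and the (trivial but essential) monotonicity inequality $n_{i+n}-n_i\geq n$, which is exactly what allows the exponential contraction to survive the regrouping.
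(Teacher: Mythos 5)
Your argument is correct: the paper omits the proof entirely (stating only that ``it is not difficult to prove''), and what you supply is exactly the intended routine argument --- restrict the splitting to the subfamily $\{M_{n_i}\}$, use the cocycle identity $\widetilde{\mathcal{F}}_i^{\,n}=\mathcal{F}_{n_i}^{\,n_{i+n}-n_i}$, and exploit $n_{i+n}-n_i\geq n$ together with $\lambda\in(0,1)$ to keep the same constants $c$ and $\lambda$. No gaps.
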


\begin{ej} It follows from Proposition \ref{propogathe} that if $\phi:M\rightarrow M$ is an Anosov diffeomorphism, then for each sequence of positive integers $(n_{i})_{i\in\mathbb{Z}}$, if $f_{i}=\phi^{n_{i}}$, then $(f_{i})_{i\in\mathbb{Z}}$ is an Anosov family.   \end{ej}

 In dimension 2, a necessary and sufficient condition for a family of matrices $\mathcal{A}=(A_{i})_{i\in\mathbb{Z}}$ acting on the $2$-torus $\mathbb{T}^{2}=\mathbb{R}^{2}/\mathbb{Z}^{2}$ by multiplication  on the
column vectors, is that  there exist constants $c > 0$ and $\sigma > 1$ such that
\[\Vert \mathcal{A}^{n} _{i}(x)\Vert\geq  c \sigma^{n} \quad\text{for all }x\in \mathbb{T}^{2} \text{ and }n \geq  1, i\in\mathbb{Z}\]
where $\Vert \cdot\Vert$ is the norm on $\mathbb{T}^{2}$ inherited from $\mathbb{R}^{2}$ (see \cite{Viana}, Proposition 2.1).

\medskip

 The following example, which is due to Arnoux and Fisher \cite{alb}, proves that Anosov families are not necessarily sequences of Anosov diffeomorphisms. 
 
\begin{ej}\label{familiamultiplicativa}   For any sequence of positive integers  $(n_{i})_{i\in \mathbb{Z}}$ set
 \begin{equation*} A_{i} =
\left(
\begin{array}{ccc}
1 & 0 \\
n_{i} & 1
 \end{array}
\right) \text{ for $i$ even}\quad\text{ and } \quad A_{i} =
\left(
\begin{array}{ccc}
1 & n_{i} \\
0 & 1
 \end{array}
\right) \text{ for $i$ odd},
\end{equation*}
acting on the 2-torus $M_{i}=\mathbb{T}^{2}$. 
The family $(A_{i})_{i\in \mathbb{Z}}$ is called  the  \textit{multiplicative family   determined by the sequence} $(n_{i})_{i\in \mathbb{Z}}$.   Let $\Vert\cdot \Vert$ be the Riemannian  metric on $M_{i}$     inherited from  $\mathbb{R}^{2}$.
For each $i\in \mathbb{Z}$, let  $s_{i} =(a_{i},b_{i})$, $ u_{i} =(c_{i},  d_{i})$ and $\lambda_{i}\in (0,1 )$ be such that  $ a_{i}d_{i} + c_{i}b_{i}=1, $
\[\text{for }i\text{ even, } a_{i}=[n_{i}n_{i+1}...], \text{ } b_{i}=1, \text{ } \frac{d_{i}}{c_{i}}=[n_{i-1}n_{i-2}...],  \text{ and } \lambda_{i}= a_{i},\]
and 
\[\text{for $i$ odd, } b_{i}=[n_{i}n_{i+1}...], \text{ } a_{i}=1, \text{ }\frac{c_{i}}{d_{i}}=[n_{i-1}n_{i-2}...] \text{ and } \lambda_{i}= b_{i}.\] 
Here,  $$[n_{i}n_{i+1}...]=\frac{1}{n_{i}+\frac{1}{n_{i+1}+\cdots}}.$$  
For all $i\in \mathbb{Z}$ and $n\geq1$,  we have  \begin{equation}\label{edfsrre}\Vert A_{i}^{n}s_{i}\Vert \leq c\lambda_{i+n-1} \cdots \lambda_{i}  \Vert s_{i}\Vert\quad \text{\quad and \quad} \Vert A_{i}^{n}u_{i}\Vert\geq c^{-1}\lambda_{i+n-1}^{-1}\cdots\lambda_{i}^{-1} \Vert u_{i}\Vert,\end{equation}
 where \(c=\max\left\{\sup_{i,j}\left\{\frac{\Vert s_{i}\Vert}{\Vert s_{j}\Vert}\right\},\sup_{i,j}\left\{\frac{\Vert u_{i}\Vert}{\Vert u_{j}\Vert}\right\}\right\}\) ($c<\infty$ because $\Vert v\Vert \in (1/2,\sqrt{2})$ for all $v\in \{s_{i}:i\in \mathbb{Z}\}\cup \{u_{i}:i\in \mathbb{Z}\} $). 
\end{ej}

Note that, if there exists $\lambda \in(0,1)$ such that $\lambda_{i}\leq\lambda$ for all $i$, we have   
$$\Vert A_{i}^{n}s_{i}\Vert\leq c\lambda ^{n}\Vert s_{i}\Vert\quad\text{ and }\quad \Vert A_{i}^{n}u_{i}\Vert\geq c^{-1}\lambda^{-n} \Vert u_{i}\Vert\text{ for all  }n\geq 1.$$
This shows that, if there is a $ \lambda\in(0, 1) $ such that $ \lambda_{i} \leq  \lambda $ for all $ i$, then $ (A_{i})_{i\in\mathbb{Z}} $ is an Anosov family, with constants $ \lambda  $ and $ c $ as defined above,   the stable subspaces are spanned by $ s_{i} $ and the unstable subspaces are spanned by $ u_{i} $.   However, we have:

\begin{propo}\label{multiplicativef2} Any multiplicative family is an Anosov family with constant $\lambda=\sqrt{2/3}$ and $2c$. \end{propo}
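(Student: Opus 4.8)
The plan is to take the two cocycle estimates \eqref{edfsrre} as given --- they are established in Example~\ref{familiamultiplicativa} for an arbitrary multiplicative family, with no restriction on the sizes of the $\lambda_i$ --- together with the continuous $D\mathcal{F}$-invariant splitting whose stable and unstable lines at a point of $M_i$ are spanned by $s_i$ and $u_i$. With these in hand, verifying Definition~\ref{anosovfamily} reduces to a single uniform estimate: that there are a fixed $\lambda\in(0,1)$ and a constant $K$ with $\lambda_{i+n-1}\cdots\lambda_i\le K\lambda^n$ for all $i$ and all $n\ge1$. This is exactly the point at which the discussion following Example~\ref{familiamultiplicativa} invoked the extra hypothesis $\lambda_i\le\lambda$; the whole content of the proposition is that this hypothesis can be dropped, at the cost of passing from $c$ to $2c$ and taking $\lambda=\sqrt{2/3}$. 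The difficulty is that the $\lambda_i$ need not be bounded away from $1$ (if $n_i=1$ and $n_{i+1}$ is large, then $\lambda_i=[n_in_{i+1}\cdots]$ is close to $1$), so no bound on a single factor can work; one must instead control consecutive factors jointly.

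The key step is the two-step estimate $\lambda_i\lambda_{i+1}\le 2/3$ for every $i$. Since $\lambda_i=[n_in_{i+1}\cdots]$ and $\lambda_{i+1}=[n_{i+1}n_{i+2}\cdots]$, the defining recursion of the continued fraction gives $\lambda_i=\dfrac{1}{\,n_i+\lambda_{i+1}\,}$, that is $n_i\lambda_i+\lambda_i\lambda_{i+1}=1$, so $\lambda_i\lambda_{i+1}=1-n_i\lambda_i$. As $n_i\ge1$ and $0<\lambda_{i+1}<1$ we have $n_i\lambda_i=\dfrac{n_i}{\,n_i+\lambda_{i+1}\,}\ge\dfrac{n_i}{\,n_i+1\,}\ge\dfrac12$, hence $\lambda_i\lambda_{i+1}\le\dfrac12\le\dfrac23=\big(\sqrt{2/3}\,\big)^2$. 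Writing $\lambda=\sqrt{2/3}$ and grouping the product $\lambda_{i+n-1}\cdots\lambda_i$ into consecutive pairs, the even case $n=2k$ yields $\lambda_{i+n-1}\cdots\lambda_i\le\lambda^{2k}=\lambda^n$, while the odd case $n=2k+1$ leaves a single leftover factor, which is $<1=\lambda^{-1}\lambda$, so that $\lambda_{i+n-1}\cdots\lambda_i\le\lambda^{2k}=\lambda^{n-1}=\lambda^{-1}\lambda^n=\sqrt{3/2}\,\lambda^n$. Since $\sqrt{3/2}<2$, in both cases $\lambda_{i+n-1}\cdots\lambda_i\le 2\lambda^n$ for all $i$ and all $n\ge1$.

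To finish, substitute this uniform bound into \eqref{edfsrre}. The stable inequality becomes $\Vert A_i^{n}s_i\Vert\le c\,(\lambda_{i+n-1}\cdots\lambda_i)\Vert s_i\Vert\le 2c\,\lambda^n\Vert s_i\Vert$, and the unstable inequality becomes $\Vert A_i^{n}u_i\Vert\ge c^{-1}(\lambda_{i+n-1}\cdots\lambda_i)^{-1}\Vert u_i\Vert\ge(2c)^{-1}\lambda^{-n}\Vert u_i\Vert$, the latter being equivalent to a backward contraction of unstable vectors by the factor $2c\,\lambda^n$. Together with the invariant splitting $E^s_p\oplus E^u_p=\mathrm{span}\{s_i\}\oplus\mathrm{span}\{u_i\}$ at $p\in M_i$, which is continuous and $D\mathcal{F}$-invariant by the construction in Example~\ref{familiamultiplicativa}, this verifies both clauses of Definition~\ref{anosovfamily} with constants $\lambda=\sqrt{2/3}$ and $2c$, so the family is Anosov. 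I expect the single genuine obstacle to be the two-step continued-fraction estimate $\lambda_i\lambda_{i+1}\le 2/3$: everything else is bookkeeping (the pairing of factors, and the harmless slack $\sqrt{3/2}<2$ absorbing the odd-$n$ leftover term).
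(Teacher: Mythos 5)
Your proof is correct and follows essentially the same route as the paper: both arguments reduce the proposition to the fact that any two consecutive factors satisfy $\lambda_i\lambda_{i+1}\le 1/2<2/3=\lambda^2$, and then absorb the possible single leftover factor into the constant $2$. The only difference is presentational --- the paper obtains the pair bound by a case analysis (if some $\lambda_j>2/3$ then $n_j=1$ and $n_{j+1}\ge 2$, forcing the neighboring $\lambda$'s below $2/3$ and $1/2$) and closes the estimate by induction on $n$, whereas you get the unconditional identity $\lambda_i\lambda_{i+1}=1-n_i\lambda_i<1/2$ directly from the continued-fraction recursion $\lambda_i=1/(n_i+\lambda_{i+1})$ and then pair off the factors explicitly.
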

\begin{proof} Notice that, if $\lambda_{j}\in (2/3,1)$ for some $j\in\mathbb{Z}$, then $\lambda_{j-1}\in (0,2/3)$ and $\lambda_{j+1}\in (0,1/2).$ Indeed, if  $\lambda_{j}=\frac{1}{n_{j}+\frac{1}{n_{j+1}+\cdots}}\in  (2/3,1)$ we must have   $n_{j}=1$ and $n_{j+1}\geq 2$. Hence,   \[\lambda_{j-1}=\frac{1}{n_{j-1}+\frac{1}{1+\cdots}}  < \frac{1}{1+(1/2)}\quad \text{and}\quad \lambda_{j+1}=\frac{1}{n_{j+1}+\frac{1}{n_{j+2}+\cdots}}<1/2.\] 

Next, by induction on $n$, we  prove that $c  \lambda_{i+n-1}\cdots\lambda_{i} <2c\lambda ^{n}$, for each $i\in\mathbb{Z}$ and $n\geq 1$. Fix $i\in\mathbb{Z}$. It is clear that if $n=1,2,$ then $c  \lambda_{i+n-1}\cdots\lambda_{i} <2c\lambda ^{n}$. Let $n\geq 2$ and assume that $c  \lambda_{i+m-1}\cdots\lambda_{i} <2c\lambda ^{m}$ for each $m\in \{1,\dots,n\}$.  Clearly, if $\lambda_{i+n}\leq 2/3, $ then $c  \lambda_{i+n}\cdots\lambda_{i} <2c\lambda ^{n+1}.$ On the other hand, if $\lambda_{i+n}> 2/3$, then $\lambda_{i+n-1}<1/2$ and by induction assumption we have 
\[ c  \lambda_{i+n}\lambda_{i+n-1}\cdots\lambda_{i}  < (\lambda_{i+n}\lambda_{i+n-1}) 2c\lambda ^{n-1}  < \frac{1}{2}\cdot2c\lambda ^{n-1} <\lambda ^{2} 2c\lambda ^{n-1}=  2c\lambda ^{n+1}.\]

It follows from \eqref{edfsrre} and  the above facts   that $$\Vert A_{i}^{n}s_{i}\Vert\leq 2c\lambda^{n}  \Vert s_{i}\Vert\quad\text{and}\quad \Vert A_{i}^{n}u_{i}\Vert\geq (2c)^{-1}\lambda^{-n} \Vert u_{i}\Vert,$$ for each $i\in\mathbb{Z}$ and $n\geq 1$, which proves the proposition. \end{proof} 
 
By Proposition \ref{propogathe} we have any gathering of a multiplicative family is an Anosov family.  On the other hand, if $F_{i}\in SL(\mathbb{N},2)$, then \begin{equation}\label{recc}F_{i} =
\left(
\begin{array}{ccc}
1 & 0 \\
n_{i,k_{i}} & 1
 \end{array}
\right) \left(
\begin{array}{ccc}
1 & n_{i,k_{i}-1} \\
0 & 1
 \end{array}
\right)\cdots \left(
\begin{array}{ccc}
1 & 0 \\
n_{i,2} & 1
 \end{array}
\right)\left(
\begin{array}{ccc}
1 & n_{i,1} \\
0  & 1
 \end{array}
\right) ,
\end{equation}
for some non-negative integers $n_{i,1},\dots,n_{i,k_{i}}$, that is,  $SL(\mathbb{N},2)$ is a semigroup ge\-nerated by  \begin{equation*} M =
\left(
\begin{array}{ccc}
1 & 0 \\
1 & 1
 \end{array}
\right)  \quad\text{ and } \quad N =
\left(
\begin{array}{ccc}
1 & 1 \\
0 & 1
 \end{array}
\right) 
\end{equation*}
(see \cite{alb}, Lemma 3.11). 

\begin{cor}\label{wed} Consider a sequence $(F_{i})_{i\in\mathbb{Z}}$ in $SL(\mathbb{N},2)$ and the factorization of each $F_{i}$ as in \eqref{recc}. If $n_{i,k_{i}}$ and $n_{i,1}$ are non-zero for each $i\in\mathbb{Z}$, then $(F_{i})_{i\in\mathbb{Z}}$ is an Anosov family. \end{cor}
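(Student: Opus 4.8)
The plan is to exhibit $(F_{i})_{i\in\mathbb{Z}}$ as a gathering of a single multiplicative family and then invoke Propositions \ref{multiplicativef2} and \ref{propogathe}. Writing each elementary factor in \eqref{recc} with the generators $M$ and $N$, we have
\[ F_{i} = M^{n_{i,k_{i}}}\, N^{n_{i,k_{i}-1}}\cdots M^{n_{i,2}}\, N^{n_{i,1}}, \]
an alternating product which, read in the order in which the maps are applied (from right to left), begins with the upper-triangular power $N^{n_{i,1}}$ and ends with the lower-triangular power $M^{n_{i,k_{i}}}$; in particular $k_{i}$ is even. Since each power $M^{n_{i,j}}$ or $N^{n_{i,j}}$ is exactly one of the matrices occurring in a multiplicative family, I would concatenate all of these factors, over all $i\in\mathbb{Z}$, into one bi-infinite sequence $\mathcal{A}=(A_{\ell})_{\ell\in\mathbb{Z}}$, and choose an increasing sequence of cut points $(m_{i})_{i\in\mathbb{Z}}$ marking where the block of $F_{i}$ begins, so that $F_{i}=A_{m_{i+1}-1}\circ\cdots\circ A_{m_{i}}$.

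The key point is that $\mathcal{A}$ is a multiplicative family. Inside the block coming from $F_{i}$ the triangular types strictly alternate by construction; at the junction between the blocks of $F_{i}$ and $F_{i+1}$, the last factor of $F_{i}$ is the lower power $M^{n_{i,k_{i}}}$ and the first factor of $F_{i+1}$ is the upper power $N^{n_{i+1,1}}$, so the alternation is preserved across blocks as well. This is exactly where the hypothesis that $n_{i,1}$ and $n_{i,k_{i}}$ are non-zero enters: a vanishing boundary exponent would insert an identity factor at a junction and destroy the strictly alternating, positive-coefficient structure. After a global parity shift placing the lower-triangular powers at even indices and the upper-triangular powers at odd indices, $\mathcal{A}$ is precisely the multiplicative family determined by the concatenated coefficient sequence (all manifolds being $\mathbb{T}^{2}$ with the Euclidean metric, so the gathering is taken over the same base).

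With this in hand the conclusion is immediate: Proposition \ref{multiplicativef2} gives that $\mathcal{A}$ is an Anosov family (with constants $\lambda=\sqrt{2/3}$ and $2c$), and since $(F_{i})_{i\in\mathbb{Z}}$ is by construction the gathering of $\mathcal{A}$ associated with the cut points $(m_{i})_{i\in\mathbb{Z}}$, Proposition \ref{propogathe} yields that $(F_{i})_{i\in\mathbb{Z}}$ is an Anosov family.

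The step demanding the most care is the verification that $\mathcal{A}$ is a bona fide multiplicative family, i.e. that the concatenated coefficient sequence consists of \emph{positive} integers and that consecutive factors are of opposite triangular type. The positivity of the boundary exponents is given by hypothesis, but one must also guarantee that the interior exponents $n_{i,j}$ with $1<j<k_{i}$ are positive. This is handled by taking the reduced alternating factorization of Lemma 3.11 in \cite{alb}: an interior zero exponent would collapse two factors of equal type and can be removed by merging, an operation that preserves both the even length $k_{i}$ and the ``starts with $N$, ends with $M$'' shape of each block, hence preserves the boundary hypothesis. Once the interior exponents are arranged to be positive, $\mathcal{A}$ is confirmed to be a multiplicative family and the argument closes.
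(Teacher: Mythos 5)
Your proposal is correct and follows exactly the paper's own (much terser) argument: recognize $(F_{i})_{i\in\mathbb{Z}}$ as a gathering of the multiplicative family obtained by concatenating the alternating factors, then apply Proposition \ref{multiplicativef2} together with Proposition \ref{propogathe}. Your additional care about where the non-vanishing of the boundary exponents $n_{i,1}$, $n_{i,k_{i}}$ enters, and about merging away interior zero exponents so that the concatenated coefficient sequence consists of positive integers, fills in details the paper leaves implicit but does not change the route.
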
 
\begin{proof} Notice that $(F_{i})_{i\in\mathbb{Z}}$ is a gathering of an multiplicative family.   It follows from Proposition \ref{multiplicativef2} that $(F_{i})_{i\in\mathbb{Z}}$ is an Anosov family. \end{proof}

 Corollary \ref{wed} provides a great variety of examples of Anosov families. Next, suppose that $X=\{B_{1},\dots,B_{k}\}\subseteq  SL(\mathbb{N},2)$.  If each $F_{i}\in X$, then  $(F_{i})_{i\in\mathbb{Z}}$ is an Anosov family   (see \cite{Viana}, Proposition 2.7). Another examples are provided by Theorem 5.1 in \cite{alb}: if $(A_{i})_{i\in \mathbb{Z}}\subseteq  SL(\mathbb{N},2)$ is a non-eventually constant sequence of matrices with non-negative entries, then it is an Anosov family on $\mathbb{T}^{2}$.

\section{Characterization of Anosov families}

Let $M$ be a compact Riemannian manifold and denote by $ \Gamma(M)$ the set consisting of continuous
sections of $T M$. For a diffeomorphism $f$ on $M$, set $f_{\ast} : \Gamma(M) \rightarrow  \Gamma(M) $ the bounded linear operator
defined by $f_{\ast}(\zeta) = D f \circ \zeta \circ f^{-1}$ for any $\zeta \in \Gamma(M)$. J. Mather in \cite{Mather1}, \cite{Mather12} proved that $ f$ is an Anosov
diffeomorphism if and only if $f_{\ast}$ is a hyperbolic automorphism. Furthermore, he proved that this is
equivalent to show that $f_{\ast}- I$ is an automorphism on $ \Gamma(M)$, where $I$  is the identity on $ \Gamma(M)$. We
can define the operator $f_{\ast}$ for non-stationary dynamical systems. In this section we will give some
conditions on an Anosov family to obtain the hyperbolicity of $f_{\ast}$ defined for such family.

 

\begin{defin}\label{subestp}   For $\tau>0$ and $i\in\mathbb{Z}$, set:
\begin{enumerate}[\upshape (i)]
\item $D(I_{i},\tau) = \{h:M_{i}\rightarrow M_{i}: h \text{ is } C^{0} \text{ and } d(h(p), I_{i}(p))\leq \tau \text{ for any } p\in M_{i} \};$
\item $ \mathcal{D} (\tau) = \{(h_{i})_{i\in\mathbb{Z}}: h_{i}\in  D(I_{i},\tau) \text{ for any } i\in\mathbb{Z}    \}$;
\item $\Gamma(M_{i})=\{\zeta:M_{i}\rightarrow TM_{i}: \sigma \text{ is a continuous section} \};$
\item $\Gamma_{\tau}(M_{i})=\{\zeta \in \Gamma(M_{i}) : \sup_{p\in M_{i}}\Vert \zeta(p)\Vert \leq \tau\};$
\item \( \Gamma (\mathcal{M}) = \left\{ (\zeta _{i})_{i\in\mathbb{Z}} : \zeta_{i}\in \Gamma  (M_{i}) \text{ and }\sup_{i\in\mathbb{Z}}\Vert \zeta_{i}\Vert _{\Gamma_{i}}<\infty\right\},\) where   \( \Vert
\zeta\Vert _{\Gamma_{i}}= \max_{p\in M_{i}} \Vert \zeta (p)\Vert.\) 
\item \( \Gamma_{\tau} (\mathcal{M}) = \left\{ (\zeta _{i})_{i\in\mathbb{Z}}\in \Gamma (\mathcal{M}) : \zeta_{i}\in \Gamma_{\tau}  (M_{i}) \text{ for each } i\in\mathbb{Z} \right\};\) 
\end{enumerate}  
 \end{defin}
 
It is clear that $\Gamma(\mathcal{M})$ is a proper subset of 
$\prod_{i=-\infty}^{\infty}\Gamma(M_{i})$. Note that   $\Gamma  (M_{i})$ is a Banach  space with the norm \( \Vert
\cdot\Vert _{\Gamma_{i}} .\) 
Therefore: 

\begin{lem}     $\Gamma(\mathcal{M})$ 
is a Banach space with the norm 
\( \Vert (\sigma _{i})_{i\in\mathbb{Z}} \Vert _{\infty}=  \sup_{i\in\mathbb{Z}}\Vert \sigma_{i}\Vert _{\Gamma_{i}}.\)  
\end{lem}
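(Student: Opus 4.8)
The plan is to verify the two defining properties of a Banach space: that $\Vert\cdot\Vert_\infty$ is a genuine norm on $\Gamma(\mathcal{M})$, and that $\Gamma(\mathcal{M})$ is complete under the induced metric. Since the excerpt has already observed that each $\Gamma(M_i)$ is a Banach space with norm $\Vert\cdot\Vert_{\Gamma_i}$, the structure of the proof is to bootstrap completeness of the product-type space $\Gamma(\mathcal{M})$ from completeness of the factors, much as one does when showing that $\ell^\infty(X)$ of bounded sequences valued in a Banach space $X$ is itself a Banach space. First I would check the norm axioms: positive-definiteness follows because $\Vert(\zeta_i)_{i\in\mathbb{Z}}\Vert_\infty=0$ forces $\Vert\zeta_i\Vert_{\Gamma_i}=0$ for every $i$, hence each $\zeta_i$ is the zero section, so $(\zeta_i)_{i\in\mathbb{Z}}$ is the zero element; homogeneity and the triangle inequality pass coordinatewise through the supremum from the corresponding properties of each $\Vert\cdot\Vert_{\Gamma_i}$. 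One should also note that $\Gamma(\mathcal{M})$ is closed under addition and scalar multiplication (the relevant suprema stay finite), so it is genuinely a normed $\mathbb{R}$-vector space.

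The substantive part is completeness. I would take a Cauchy sequence $(\zeta^{(n)})_{n\in\mathbb{N}}$ in $\Gamma(\mathcal{M})$, writing $\zeta^{(n)}=(\zeta^{(n)}_i)_{i\in\mathbb{Z}}$, and proceed in the standard three steps. First, for each fixed $i\in\mathbb{Z}$ the inequality $\Vert\zeta^{(n)}_i-\zeta^{(m)}_i\Vert_{\Gamma_i}\leq\Vert\zeta^{(n)}-\zeta^{(m)}\Vert_\infty$ shows that $(\zeta^{(n)}_i)_{n\in\mathbb{N}}$ is Cauchy in the Banach space $\Gamma(M_i)$, so it converges to a limit $\zeta_i\in\Gamma(M_i)$. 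Second, I would show the candidate limit $\zeta:=(\zeta_i)_{i\in\mathbb{Z}}$ actually lies in $\Gamma(\mathcal{M})$, i.e. that $\sup_{i\in\mathbb{Z}}\Vert\zeta_i\Vert_{\Gamma_i}<\infty$; this follows from the fact that a Cauchy sequence is bounded, so there is $K>0$ with $\Vert\zeta^{(n)}\Vert_\infty\leq K$ for all $n$, whence $\Vert\zeta_i\Vert_{\Gamma_i}=\lim_n\Vert\zeta^{(n)}_i\Vert_{\Gamma_i}\leq K$ uniformly in $i$. Third, I would prove convergence in the $\Vert\cdot\Vert_\infty$ norm: given $\varepsilon>0$ pick $N$ so that $\Vert\zeta^{(n)}-\zeta^{(m)}\Vert_\infty<\varepsilon$ for $n,m\geq N$; then for each $i$ and each $n\geq N$, letting $m\to\infty$ gives $\Vert\zeta^{(n)}_i-\zeta_i\Vert_{\Gamma_i}\leq\varepsilon$, and taking the supremum over $i$ yields $\Vert\zeta^{(n)}-\zeta\Vert_\infty\leq\varepsilon$.

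The point requiring the most care, and the one I would flag as the main obstacle, is the uniformity in the index $i$ at two junctures: establishing that the limit sits in $\Gamma(\mathcal{M})$, and upgrading the coordinatewise convergence to convergence in $\Vert\cdot\Vert_\infty$. The crucial observation making both work is that the estimate $\Vert\zeta^{(n)}_i-\zeta^{(m)}_i\Vert_{\Gamma_i}\leq\Vert\zeta^{(n)}-\zeta^{(m)}\Vert_\infty$ is uniform in $i$, so the threshold $N$ can be chosen independently of $i$; the passage $m\to\infty$ is performed for each fixed $i$ but against an $i$-independent bound, which is exactly what permits taking the supremum over $i$ at the end. This is the non-autonomous analogue of the completeness of $\ell^\infty$, and the only genuine content beyond bookkeeping is keeping track of which limits are taken before the supremum and which after.
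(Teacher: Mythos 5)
Your proof is correct and is exactly the standard $\ell^\infty$-over-Banach-factors argument that the paper has in mind: the paper states the lemma without proof, treating it as an immediate consequence of each $\Gamma(M_i)$ being Banach, which is precisely what your three-step completeness argument makes explicit. No discrepancy to report.
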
  
  
  \begin{defin}\label{poerator} For any n.s.d.s. $ \mathcal{F}=(f_{i})_{i\in\mathbb{Z}}\in \mathcal{D}^{1}(\mathcal{M})$, define  
\begin{align*} \textbf{F}:\mathcal{D}(\textbf{F})&\rightarrow \Gamma(\mathcal{M})\\
(\zeta_{i})_{i\in\mathbb{Z}}&\mapsto (\textbf{F}_{i-1}(\zeta_{i-1}))_{i\in \mathbb{Z}}
\end{align*}  
where   $\textbf{F}_{i}:\Gamma(M_{i})\rightarrow \Gamma(M_{i+1})$ is defined by the formula
\(\textbf{F}_{i}(\zeta)(p) = D _{f_{i}^{-1}(p)}(f_{i}) (\zeta (f_{i}^{-1}(p))),\)  for \(p\in M_{i+1},  \zeta\in \Gamma (M_{i}),\) and \[ \mathcal{D}(\textbf{F})=\{ \zeta \in\Gamma(\mathcal{M}):\textbf{F}(\zeta)\in\Gamma(\mathcal{M})\}.\]
\end{defin}

It is not difficult to prove that $ \textbf{F}$ is a linear operator and    
\begin{equation}\label{eqqq} \Vert \textbf{F}\Vert=\sup_{\Vert(\zeta_{i})\Vert_{\infty}=1}\Vert  \textbf{F}_{i}(\zeta_{i})\Vert_{\infty}\leq \sup_{i\in\mathbb{Z}}\Vert Df_{i}\Vert.\end{equation}
Therefore, if $\sup_{i\in\mathbb{Z}}\Vert Df_{i}\Vert<\infty,$ then  $\textbf{F}$ is a bounded linear operator, and, in this case, $$\mathcal{D}(\textbf{F})=\Gamma(\mathcal{M}).$$  However, in general, we do  not have $\sup_{i\in\mathbb{Z}}\Vert Df_{i}\Vert<\infty$ (see Example \ref{familiamultiplicativa}). Consequently, \textbf{F} could be an unbounded operator. 

\medskip

In this section  we will suppose that that $\mathcal{F}\in \mathcal{A}^{1}(\mathcal{M})$ with constants $\lambda\in (0,1)$ and $c\geq1.$ Consider the splitting $TM_{i}=E_{i}^{s}\oplus E_{i}^{u}$ induced by $ \mathcal{F}$ for each $i\in \mathbb{Z}$. Set 
  $$\Gamma^{s}(M_{i})=\{\sigma\in \Gamma(M_{i}): \text{ the image of }\sigma \text{ is contained in } E^{s} \}$$
  and
  $$\Gamma^{u}(M_{i})=\{\sigma\in \Gamma(M_{i}): \text{ the image of }\sigma \text{ is contained in } E^{u} \}.$$

It is clear that \[\Gamma(M_{i})=\Gamma^{s}(M_{i})\oplus \Gamma^{u}(M_{i}),\quad \textbf{F}_{i} (\Gamma^{s}(M_{i}))=\Gamma^{s}(M_{i+1}) \quad\text{ and }\quad \textbf{F}_{i}(\Gamma^{u}(M_{i}))=\Gamma^{u}(M_{i+1}).\]

For $t=s,u$, set \begin{equation}\label{dff} \Gamma ^{t}(\mathcal{M})=\{ (\sigma_{i})_{i\in\mathbb{Z}}\in \Gamma (\mathcal{M}): \sigma_{i} \in \Gamma^{t}(M_{i}) \text{ for each }i\in\mathbb{Z} \}.\end{equation} 

Note that $\Gamma^{s}(\mathcal{M})$ and $\Gamma^{u}(\mathcal{M})$ are closed subspaces of $\Gamma(\mathcal{M})$ and $ \Gamma^{s}(\mathcal{M})\cap \Gamma^{u}(\mathcal{M})=\{\tilde{0}\}$, the zero vector in $ \Gamma(\mathcal{M}).$

\begin{lem}\label{lema44}  $ \Gamma^{s}(\mathcal{M})$ and  $ \Gamma^{u}(\mathcal{M})$ are complementary  subspaces of $ \Gamma (\mathcal{M})$ (that is,  \( \Gamma (\mathcal{M}) = \Gamma ^{s}(\mathcal{M})  \oplus \Gamma ^{u}(\mathcal{M})  )\) if and only if $\mathcal{F}$ satisfies the property of angles. 
\end{lem}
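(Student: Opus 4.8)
The plan is to prove the two implications separately, and in both the central object is the fiberwise (oblique) projection $\pi^s_p:T_pM_i\to E^s_p$ along $E^u_p$, together with its unstable counterpart $\pi^u_p$. Since $E^s_p\cap E^u_p=\{0\}$, every section decomposes fiberwise as $\sigma_i(p)=\pi^s_p(\sigma_i(p))+\pi^u_p(\sigma_i(p))$, and because the splitting is continuous each piece is again a continuous section of the appropriate subbundle. The only issue is therefore whether these pieces remain \emph{uniformly bounded} in $i$, i.e. whether they lie in $\Gamma(\mathcal{M})$. Thus the whole statement reduces to the equivalence $\sup_{p\in\mathcal{M}}\|\pi^s_p\|<\infty\iff\mathcal{F}$ s.p.a., together with the observation that a uniform bound on the projections is exactly what is needed to split $\Gamma(\mathcal{M})$.

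For the direction ``s.p.a. $\Rightarrow$ complementary'' I would invoke Remark \ref{remarnormas}: if $\mathcal{F}$ satisfies the property of angles there is a norm $\|\cdot\|_{\star}$ as in \eqref{metriceq}, uniformly equivalent to $\|\cdot\|$ via \eqref{metriceqe}. In this box norm the projections are trivially contractions, $\|\pi^s_p(v)\|_{\star}=\|v_s\|_{\ast}\le\|v\|_{\star}$, so \eqref{metriceqe} gives $\|\pi^s_p(v)\|\le C^{2}\|v\|$ uniformly in $p$ and $i$. Hence for any $(\sigma_i)\in\Gamma(\mathcal{M})$ the sequences $(\pi^s_{\bullet}\sigma_i)$ and $(\pi^u_{\bullet}\sigma_i)$ have sup-norm at most $C^{2}\|(\sigma_i)\|_{\infty}$, so they belong to $\Gamma^{s}(\mathcal{M})$ and $\Gamma^{u}(\mathcal{M})$; together with $\Gamma^{s}(\mathcal{M})\cap\Gamma^{u}(\mathcal{M})=\{\tilde 0\}$ (noted before the lemma) this yields $\Gamma(\mathcal{M})=\Gamma^{s}(\mathcal{M})\oplus\Gamma^{u}(\mathcal{M})$.

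For the converse I would argue by contraposition: assume the angle is not bounded away from zero. On each fixed $M_i$ the map $p\mapsto\angle(E^s_p,E^u_p)$ is continuous and positive on a compact manifold, hence attains a positive minimum $\alpha_i$; failure of s.p.a. means $\inf_i\alpha_i=0$, and since a finite family of positive numbers has positive infimum, there are infinitely many \emph{distinct} indices $i_k$ with $\alpha_{i_k}=:\theta_k\to 0$. Choose $p_k\in M_{i_k}$ realizing $\theta_k$ and unit vectors $a_k\in E^s_{p_k}$, $b_k\in E^u_{p_k}$ with $\angle(a_k,b_k)=\theta_k$ and $\langle a_k,b_k\rangle>0$, so that $\|a_k-b_k\|=\sqrt{2-2\cos\theta_k}\to 0$. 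Set $v_k=(a_k-b_k)/\|a_k-b_k\|$, a unit vector whose stable component $\pi^s_{p_k}(v_k)=a_k/\|a_k-b_k\|$ has norm $1/\|a_k-b_k\|\to\infty$. Using the uniform injectivity radius (Remark \ref{exponential}) and a bump function supported in a small ball around $p_k$, extend $v_k$ to a continuous section $\sigma_{i_k}$ on $M_{i_k}$ with $\sigma_{i_k}(p_k)=v_k$ and $\|\sigma_{i_k}\|_{\Gamma_{i_k}}\le 1$, and put $\sigma_i\equiv 0$ for $i\notin\{i_k\}$. Then $(\sigma_i)\in\Gamma(\mathcal{M})$, but its fiberwise stable part satisfies $\|\pi^s_{\bullet}\sigma_{i_k}\|_{\Gamma_{i_k}}\ge 1/\|a_k-b_k\|\to\infty$, so it is unbounded and not in $\Gamma(\mathcal{M})$. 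By uniqueness of the fiberwise decomposition, $(\sigma_i)$ cannot be written as an element of $\Gamma^{s}(\mathcal{M})$ plus one of $\Gamma^{u}(\mathcal{M})$, so the subspaces are not complementary.

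The routine parts are the linear-algebra estimate relating $\theta_k$ to $\|a_k-b_k\|$ and the bump-function extension. The main obstacle I anticipate is keeping the counterexample clean: justifying that the indices $i_k$ may be taken distinct (so the blow-up genuinely lives in the $\sup_i$ norm rather than within a single compact fibre, where the projection is automatically bounded) and verifying that the extended sections keep sup-norm $\le 1$ while their stable components blow up.
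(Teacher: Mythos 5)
Your proposal is correct and follows essentially the same route as the paper: the forward direction rests on the uniform boundedness of the oblique projections $\pi^s_p,\pi^u_p$ granted by the angle condition (the paper simply asserts the constant $K$ where you derive it from the box norm of Remark \ref{remarnormas}), and the converse builds a section of sup-norm $1$ supported at points in infinitely many distinct fibers where the angle degenerates, whose unique fiberwise stable part is unbounded. Your explicit choice $v_k=(a_k-b_k)/\|a_k-b_k\|$ is a cleaner version of the paper's vectors $v^s_{k_i}+v^u_{k_i}$, and your observation that the bad indices must be distinct (by compactness of each $M_i$) is a point the paper glosses over with its ``without loss of generality'' along an orbit.
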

\begin{proof} 
Suppose that $\mathcal{F}$ s.p.a. Let us prove that $ \Gamma(\mathcal{M}) = \Gamma^{s}(\mathcal{M})\oplus \Gamma^{u}(\mathcal{M})$. For $\zeta= (\zeta_{i})_{i\in\mathbb{Z}}\in \Gamma(\mathcal{M})$, take
\begin{equation}\label{dfdf} \zeta_{i}^{s}(p) = \pi^{s} (p)\zeta_{i}(p)\quad\text{
and}\quad 
\zeta_{i}^{u} (p) = \pi^{u}(p)\zeta_{i}(p), \quad\text{
for }i \in\mathbb{Z}, p \in M_{i} ,\end{equation}
where $\pi^{s}(p)$ and $\pi^{u}(p)$  are the projections on $E^{s}_{p}$  and $E_{p}^{u}$, respectively. Since $\mathcal{F}$ satisfies the property of
the angles, there exists a $K > 0 $ such that
\[\Vert \zeta_{i}^{s}(p)\Vert \leq K\Vert \zeta \Vert_{\infty}
\quad\text{and}\quad \Vert \zeta_{i}^{u}(p)\Vert \leq K\Vert \zeta \Vert_{\infty}, \text{ for } i \in\mathbb{Z}, p \in  M_{i} .\]
Therefore $\zeta^{s} = (\zeta^{s}_{i} )_{i\in \mathbb{Z}} \in \Gamma^{s}(\mathcal{M}), \zeta^{u} = (\zeta^{u}_{i} )_{i\in \mathbb{Z}}  \in \Gamma^{u}(\mathcal{M})$  and $\zeta = \zeta^{s} + \zeta^{u}$, which proves that $ \Gamma(\mathcal{M}) = \Gamma^{s}(\mathcal{M})\oplus \Gamma^{u}(\mathcal{M})$. 

Now, suppose that $\mathcal{F}$  does not satisfies the property of angles. Without loss of generality, we can
assume there exist $p \in M_{0}$  and a subsequence $k_{1} < k_{2} < k_{3} < . . . $ in $\mathbb{N}$ such that the angle $\theta_{k_{i}}$  between
the subspaces $   E_{\mathcal{F}_{0}^{k_{i}}(p)}^{s}$ and $   E_{\mathcal{F}_{0}^{k_{i}}(p)}^{u}$  converges to zero as $
i\rightarrow + \infty$. We can suppose that the angle between $ E_{\mathcal{F}_{0}^{k_{i}}(p)}^{s}$ and $   E_{\mathcal{F}_{0}^{k_{i}}(p)}^{u}$ is less than $\pi/4$ and $\theta_{k_{i+1}}<\theta_{k_{i}}$ for every $i\geq 1$.  For each $i\geq 1$,  take $v_{k_{i}}^{s}\in   E_{\mathcal{F}_{0}^{k_{i}}(p)}^{s}$ and $v_{k_{i}}^{u}\in    E_{\mathcal{F}_{0}^{k_{i}}(p)}^{u}$ such that for all $i\geq  1,$ 
$$    \widehat{v_{k_{i}}^{s} v_{k_{i}}^{u}}=\pi -\theta_{k_{i}} ,\quad \frac{1}{\sin(\theta_{k_{i-1}})}<\Vert v_{k_{i}} ^{u} \Vert <\frac{1}{\sin(\theta_{k_{i}})},   \quad \text{and} \quad \Vert v_{k_{i}}^{s}\Vert  =\sqrt{1-\sin^{2}(\theta_{k_{i}})\Vert v_{k_{i}}^{u}\Vert^{2}}-\cos(\theta_{k_{i}})\Vert  v_{k_{i}}^{u}\Vert,$$   where $ \widehat{v_{k_{i}}^{s} v_{k_{i}}^{u}}$ is the angle between $v_{k_{i}}^{s}$ and $v_{k_{i}}^{u}$.
 Since  
\begin{equation*}  \Vert v_{k_{i}}^{s}+ v_{k_{i}}^{u}\Vert^{2} =\Vert v_{k_{i}}^{s}\Vert^{2} + \Vert v_{k_{i}}^{u}\Vert^{2}  + 2\cos(\theta_{k_{i}})\Vert v_{k_{i}}^{s}\Vert \Vert v_{k_{i}}^{u}\Vert \quad \text{for each } i\geq 1,\end{equation*}
we can prove that $$ \Vert v_{k_{i}}^{s}+ v_{k_{i}}^{u}\Vert=1 \quad  \text{for each } i\geq 1.$$
However, $\Vert v_{k_{i}}^{u}\Vert \rightarrow \infty$  as $i \rightarrow \infty$ and, therefore, $\Vert v_{k_{i}}^{s}\Vert \rightarrow \infty$  as $i \rightarrow \infty$. Take $\zeta = (\zeta_{i})_{i\in\mathbb{Z}}$ such that $\zeta_{k_{i}}(\mathcal{F}^{k_{i}}(p)) = v_{k_{i}}^{s} + v_{k_{i}}^{u}$ and $\Vert \zeta_{i}\Vert_{\Gamma_{i}}\leq 1$ for any $i\in\mathbb{Z}$. Note that the sections $\zeta^{s} = (\zeta_{i}^{s}) _{i\in\mathbb{Z}}$ and $\zeta^{u} = (\zeta_{i}^{u}) _{i\in\mathbb{Z}}$
defined in \eqref{dfdf}  are the only that satisfy $\zeta_{i}^{s}\in \Gamma^{s}(M_{i}),$  $\zeta_{i}^{u}\in \Gamma^{u}(M_{i})$ and $\zeta=\zeta^{s}+\zeta^{u}.$    However $\zeta_{i}^{s}$ is not bounded and therefore does not belong to $\Gamma^{s}(\mathcal{M}).$ Hence $\zeta\in \Gamma(\mathcal{M})\setminus \Gamma^{s}(\mathcal{M})\oplus \Gamma^{u}(\mathcal{M})$, that is, $\Gamma^{s}(\mathcal{M})$ and  $\Gamma^{u}(\mathcal{M})$ are not complementary in  $\Gamma(\mathcal{M})$ 
\end{proof}

Every $C^{1}$-Anosov diffeomorphism $\phi : M \rightarrow M $ defined on a compact Riemannian manifold $M$
satisfies the property of angles, because the compactness of $M$. Hence, the subspaces $\Gamma^{s}(M)$ and  $\Gamma^{u}(M)$ defined in \eqref{dff} are complementary subspaces in  $\Gamma(M)$. Example 2.3 in \cite{Jeo2} proves that there exist Anosov families which do
not satisfy the property of angles. In that case, for any $p \in  M_{0}$, the angle between $E_{\mathcal{F}^{n} (p)}^{s}$ and $E_{\mathcal{F}^{n} (p)}^{u}$ 
converges to zero as $n \rightarrow\pm \infty$.

\medskip

Now, following the Mather's ideas in \cite{Mather1} and \cite{Mather12} we obtain the next characterization for Anosov
families which s.p.a. and with bounded derivative.

 \begin{teo}\label{characte}  $\textbf{F}:\Gamma(\mathcal{M})\rightarrow \Gamma(\mathcal{M})$ is a bounded hyperbolic automorphism if and only if $\mathcal{F}\in\mathcal{A}^{1}(\mathcal{M})$ s.p.a. and $\sup_{i\in\mathbb{Z}} \Vert D f_{i}\Vert <\infty$. 
 \end{teo}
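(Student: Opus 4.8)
The plan is to follow Mather's scheme, exploiting the stable/unstable decomposition of $\Gamma(\mathcal{M})$ supplied by Lemma \ref{lema44} together with the spectral theory of $\textbf{F}$. Throughout I use that the iterates of $\textbf{F}$ are honest push-forwards: since $\textbf{F}(\zeta)_i=\textbf{F}_{i-1}(\zeta_{i-1})$, one gets $(\textbf{F}^{n}\zeta)_i(p)=D_q(\mathcal{F}_{i-n}^{\,n})(\zeta_{i-n}(q))$ with $q=(\mathcal{F}_{i-n}^{\,n})^{-1}(p)$, and dually $(\textbf{F}^{-n}\zeta)_i(q)=D_{\mathcal{F}_i^{n}(q)}(\mathcal{F}_i^{-n})(\zeta_{i+n}(\mathcal{F}_i^{n}(q)))$.

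For the implication ($\Leftarrow$), assume $\mathcal{F}\in\mathcal{A}^{1}(\mathcal{M})$ s.p.a.\ with constants $\lambda,c$ and $\sup_i\Vert Df_i\Vert<\infty$. By \eqref{eqqq} the operator $\textbf{F}$ is bounded and defined on all of $\Gamma(\mathcal{M})$, and by Lemma \ref{lema44} we have $\Gamma(\mathcal{M})=\Gamma^{s}(\mathcal{M})\oplus\Gamma^{u}(\mathcal{M})$, a splitting $\textbf{F}$ preserves because $\textbf{F}_i(\Gamma^{t}(M_i))=\Gamma^{t}(M_{i+1})$ for $t=s,u$. Feeding the Anosov estimate (ii) into the push-forward formulas gives $\Vert\textbf{F}^{n}|_{\Gamma^{s}(\mathcal{M})}\Vert\le c\lambda^{n}$ and $\Vert\textbf{F}^{-n}|_{\Gamma^{u}(\mathcal{M})}\Vert\le c\lambda^{n}$; passing to the adapted norm $\Vert\cdot\Vert_{\star}$ of Remark \ref{remarnormas}, these become genuine contractions, so the spectral radius of $\textbf{F}|_{\Gamma^{s}}$ is $<1$ and that of $\textbf{F}^{-1}|_{\Gamma^{u}}$ is $<1$. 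Hence $\sigma(\textbf{F})=\sigma(\textbf{F}|_{\Gamma^{s}})\cup\sigma(\textbf{F}|_{\Gamma^{u}})$ splits into a part inside the open unit disk and a part outside it, giving $\sigma(\textbf{F})\cap\mathbb{S}^{1}=\emptyset$. Injectivity of $\textbf{F}$ is clear since each $\textbf{F}_i$ is a fiberwise isomorphism, and surjectivity is solved summand by summand via $\zeta_i=\textbf{F}_i^{-1}(\eta_{i+1})$; on $\Gamma^{u}$ the solution is bounded by the contraction estimate, after which the bounded inverse theorem makes $\textbf{F}^{-1}$ bounded, so $\textbf{F}$ is a hyperbolic automorphism.

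For the converse ($\Rightarrow$), assume $\textbf{F}$ is a bounded hyperbolic automorphism. Testing $\textbf{F}$ on sections concentrated near a point where $\Vert D_pf_i\Vert$ is nearly maximal yields $\Vert\textbf{F}\Vert\ge\sup_i\Vert Df_i\Vert$, so the derivative bound holds. Hyperbolicity supplies the Riesz projections and an $\textbf{F}$-invariant splitting $\Gamma(\mathcal{M})=\Gamma_{-}\oplus\Gamma_{+}$ with $\Vert\textbf{F}^{n}|_{\Gamma_{-}}\Vert\le C\mu^{n}$ and $\Vert\textbf{F}^{-n}|_{\Gamma_{+}}\Vert\le C\mu^{n}$ for some $0<\mu<1$. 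I then define fiberwise $E^{s}_{p}=\{\zeta_i(p):\zeta\in\Gamma_{-}\}$ and $E^{u}_{p}=\{\zeta_i(p):\zeta\in\Gamma_{+}\}$ for $p\in M_i$. Completeness $T_pM_i=E^{s}_{p}+E^{u}_{p}$ is immediate by decomposing any section through $p$; for $E^{s}_{p}\cap E^{u}_{p}=\{0\}$, a common vector would be contracted by $D\mathcal{F}^{n}$ both forward (evaluating $\textbf{F}^{n}\zeta^{-}$ along the orbit) and backward, which the uniform cocycle bounds force to vanish. Invariance under $D\mathcal{F}$ follows from $\textbf{F}$-invariance of $\Gamma_{\pm}$, while continuity of $p\mapsto E^{s}_p,E^{u}_p$ and the uniform estimate $\Vert D_p\mathcal{F}_i^{n}(v)\Vert\le C\mu^{n}\Vert v\Vert$ on $E^{s}_p$ come from transporting the operator bounds through the push-forward formula, using boundedness of the Riesz projections to control section norms by fiber norms. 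Thus $\mathcal{F}\in\mathcal{A}^{1}(\mathcal{M})$, and since $\Gamma^{s}(\mathcal{M})$ and $\Gamma^{u}(\mathcal{M})$ are then complementary, Lemma \ref{lema44} returns s.p.a.

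The main obstacle is the localization step in the converse: deducing from the purely operator-theoretic, \emph{global} spectral decomposition of $\textbf{F}$ that the projections are induced by a \emph{continuous, fiberwise} splitting of $T\mathcal{M}$ with \emph{uniform} hyperbolic constants and uniformly transverse summands. The non-stationarity, i.e.\ infinitely many non-isometric fibers $M_i$, is exactly what makes this uniformity delicate and is the non-stationary replacement for the compactness of $M$ in Mather's original argument; this is the reason s.p.a.\ and $\sup_i\Vert Df_i\Vert<\infty$ are imposed. I note that the same uniformity reappears in the direct implication as the need to invert $\textbf{F}$ on the stable summand, namely to bound $\zeta^{s}_i=\textbf{F}_i^{-1}(\eta^{s}_{i+1})$; this presupposes the contraction on $E^{s}$ to be uniformly bounded below, equivalently $\sup_i\Vert Df_i^{-1}\Vert<\infty$, which should be read into the standing hypotheses on $\mathcal{F}$.
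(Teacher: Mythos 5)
Your forward implication is essentially the paper's argument (push-forward formulas, the splitting from Lemma \ref{lema44}, spectral radius estimates giving $\sigma(\textbf{F}|_{\Gamma^{s}})\subseteq\{|z|\leq\lambda\}$ and $\sigma(\textbf{F}|_{\Gamma^{u}})\subseteq\{|z|\geq\lambda^{-1}\}$), and your side remark that surjectivity of $\textbf{F}$ on the stable summand quietly uses a uniform lower bound on $Df_{i}|_{E^{s}}$ is a fair observation that the paper also glosses over. The problem is the converse. You correctly identify the localization step --- passing from the global spectral splitting $\Gamma(\mathcal{M})=\Gamma_{-}\oplus\Gamma_{+}$ to a continuous fiberwise splitting of $T\mathcal{M}$ with uniform constants --- as ``the main obstacle,'' but you then do not actually carry it out: defining $E^{s}_{p}=\{\zeta_{i}(p):\zeta\in\Gamma_{-}\}$ and asserting that the required properties ``come from transporting the operator bounds through the push-forward formula'' is precisely the step that needs an argument. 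Concretely, your proof that $E^{s}_{p}\cap E^{u}_{p}=\{0\}$ does not close: from $v=\zeta_{i}(p)$ with $\zeta\in\Gamma_{-}$ and $v=\eta_{i}(p)$ with $\eta\in\Gamma_{+}$ you get $\Vert D\mathcal{F}^{n}(v)\Vert\leq\Vert\textbf{F}^{n}\zeta\Vert_{\infty}\to 0$ and $\Vert D\mathcal{F}^{-n}(v)\Vert\leq\Vert\textbf{F}^{-n}\eta\Vert_{\infty}\to 0$, but simultaneous forward and backward decay of the fiber norms along one orbit does not force $v=0$ unless you can control $\Vert D\mathcal{F}^{n}\Vert$ along the backward orbit (to recover $\Vert v\Vert$ from $\Vert D\mathcal{F}^{-n}v\Vert$); the spectral estimates on $\textbf{F}^{\pm n}$ bound section sup-norms, not fiber norms at a prescribed point, so the expansion on $\Gamma_{+}$ gives you nothing pointwise. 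The same issue infects the uniform hyperbolic estimate $\Vert D_{p}\mathcal{F}_{i}^{n}(v)\Vert\leq C\mu^{n}\Vert v\Vert$ for $v\in E^{s}_{p}$: you need $\Vert v\Vert$ below by a multiple of $\Vert\zeta\Vert_{\infty}$ for a suitable representing section, which requires localizing sections near the point.

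The paper resolves exactly this by a different device: it introduces the spaces $\Gamma^{r}_{j}$ of sections supported at a single index $j$ and characterized by the growth rates $\limsup_{n}\Vert\textbf{F}^{\pm n}(\zeta)\Vert^{1/n}\leq\lambda_{r}$, shows $\Gamma(M_{j})=\Pi_{j}(\Gamma^{1}_{j})\oplus\Pi_{j}(\Gamma^{2}_{j})$ at the level of section spaces (where directness is inherited from $\Gamma^{1}\cap\Gamma^{2}=\{0\}$, sidestepping the pointwise argument), proves each $\Pi_{j}(\Gamma^{r}_{j})$ is a $C^{0}(M_{j},\mathbb{R})$-submodule, and then invokes Swan's theorem to realize these modules as continuous subbundles $E^{1}_{j},E^{2}_{j}$ of $TM_{j}$ with $TM_{j}=E^{1}_{j}\oplus E^{2}_{j}$ fiberwise. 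The module structure is what lets one localize section-norm estimates to fiber-norm estimates. Without this (or some substitute, e.g.\ multiplying by bump functions concentrated along an orbit, which is the same idea), your converse does not go through; everything after that point (deriving s.p.a.\ from $\Gamma^{s}(\mathcal{M})\oplus\Gamma^{u}(\mathcal{M})=\Gamma(\mathcal{M})$ via Lemma \ref{lema44}, and recovering $\sup_{i}\Vert Df_{i}\Vert<\infty$ from boundedness of $\textbf{F}$) is fine and matches the paper.
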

 
 \begin{proof}
 Suppose that $ \mathcal{F} \in  \mathcal{A}^{1} (\mathcal{M})$  s.p.a. and $\sup_{i\in\mathbb{Z}} \Vert D f_{i}\Vert <\infty$. Thus \textbf{F} is a bounded automorphism on
$\Gamma(\mathcal{M})$. If $n\geq 1$, then 
\begin{equation}\label{dccd} \Vert\textbf{F}_{i+n-1}\circ \cdots \circ \textbf{F}_{i} (\zeta)\Vert _{\Gamma_{i+n}}\leq c{\lambda}^{n} \Vert \zeta \Vert _{\Gamma_{i}}     \text{ if }  \zeta\in \Gamma^{s}(M_{i})\end{equation}
 and \begin{equation}\label{eddee}\Vert\textbf{F}_{i+n-1}\circ \cdots \circ\textbf{F}_{i} (\zeta)\Vert _{\Gamma_{i+n}}\geq c^{-1}{\lambda} ^{-n}\Vert \zeta\Vert _{\Gamma_{i}} \text{ if }  \zeta\in \Gamma^{u}(M_{i}).\end{equation}
 
 It follows from Lemma \ref{lema44} that $ \Gamma^{s}(\mathcal{M})$ and $ \Gamma^{u}(\mathcal{M})$, defined in \eqref{dff}, are complementary subspaces
of $ \Gamma(\mathcal{M})$. Furthermore, we have they are invariant by $\textbf{F}$. Therefore,  $$\sigma(\textbf{F}) = \sigma(\textbf{F}|_{ \Gamma^{s}(\mathcal{M})})\cup \sigma(\textbf{F}|_{ \Gamma^{u}(\mathcal{M})})$$ (see
\cite{TKato}). It follows from \eqref{dccd} and \eqref{eddee} that for any $n\geq 1$ 
\[ \Vert\textbf{F}^{n}(\zeta)\Vert _{\infty}\leq c{\lambda}^{n} \Vert \zeta \Vert _{\infty}     \text{ for }  \zeta\in \Gamma^{s}(\mathcal{M})\quad\text{ and }\quad  \Vert\textbf{F}^{n}(\zeta)\Vert _{\infty}\geq c^{-1}{\lambda}^{-n} \Vert \zeta \Vert _{\infty} \text{ for }  \zeta\in \Gamma^{u}(\mathcal{M}).\]

Hence \[ \lim_{n\rightarrow \infty} \Vert (\textbf{F}|_{\Gamma^{s}(\mathcal{M})})^{n}\Vert ^{1/n}\leq \lambda <1 \quad \text{and} \quad  \lim_{n\rightarrow \infty} \Vert (\textbf{F}|_{\Gamma^{u}(\mathcal{M})})^{-n}\Vert ^{1/n}\geq \lambda^{-1}>1  .\]
This fact proves that $$\sigma(\textbf{F}|_{\Gamma^{s}(\mathcal{M})})
\subseteq \{z \in\mathbb{C} : \Vert z\Vert\leq \lambda\} \quad\text{and}\quad \sigma(\textbf{F}|_{\Gamma^{u}(\mathcal{M})})
\subseteq \{z \in\mathbb{C} : \Vert z\Vert\geq \lambda^{-1}\}$$
(see \cite{TKato}). Hence $\textbf{F}$ is a hyperbolic automorphism. 

Now, assume that $\textbf{F}:\Gamma(\mathcal{M})\rightarrow \Gamma (\mathcal{M})$ is  a bounded hyperbolic automorphism. Thus, there exist two closed subspaces $\Gamma^{1}$ and $\Gamma^{2}$ of $\Gamma(\mathcal{M})$ such that 
\begin{enumerate}[i.]
    \item $\Gamma(\mathcal{M})=\Gamma^{1}\oplus \Gamma^{2}$;
    \item $\textbf{F}(\Gamma^{1})=\Gamma^{1}$ and $\textbf{F}(\Gamma^{2})=\Gamma^{2}$;
    \item there exist $\lambda_{1}$, $\lambda_{2}\in (0,1)$ such that 
    \begin{equation}\label{2wew2}
        \lim_{n\rightarrow \infty} \Vert (\textbf{F}|_{\Gamma^{1}})^{n}\Vert ^{1/n}=\lambda_{1}\quad \text{and} \quad  \lim_{n\rightarrow \infty} \Vert (\textbf{F}|_{\Gamma^{2}})^{-n}\Vert ^{1/n}=\lambda_{2};  \end{equation}
\item $\sigma(\textbf{F})=\sigma(\textbf{F}|_{\Gamma^{1}})\cup \sigma(\textbf{F}|_{\Gamma^{2}})$, with $\sigma(\textbf{F}|_{\Gamma^{1}} )\subseteq \{ z\in\mathbb{Z}:0< \vert z\vert \leq \lambda_{1}\} $ and  $\sigma(\textbf{F}|_{\Gamma^{2}}) \subseteq \{ z\in\mathbb{Z}:    \lambda_{2}^{-1} \leq \vert z\vert\} $ (see \cite{TKato}). \end{enumerate}

For each $j\in\mathbb{Z}$, set 
\[ \Gamma_{j}^{1}=\{ \zeta=(\zeta_{i})_{i\in\mathbb{Z}} \in\Gamma(\mathcal{M}):\zeta_{i}=0\text{ for } i\neq j \text{ and } \limsup_{n\rightarrow \infty} \Vert \textbf{F}^{n}(\zeta)\Vert ^{1/n} \leq \lambda_{1}\} \] and 
\[ \Gamma_{j}^{2}=\{ \zeta=(\zeta_{i})_{i\in\mathbb{Z}} \in\Gamma(\mathcal{M}):\zeta_{i}=0\text{ for } i\neq j \text{ and } \limsup_{n\rightarrow \infty} \Vert \textbf{F}^{-n}(\zeta)\Vert ^{1/n} \leq \lambda_{2}\} .\]
Thus $\Gamma_{j}^{1}$ and $\Gamma_{j}^{2}$ are subspaces of $\Gamma(\mathcal{M})$ for each $j\in\mathbb{Z}$. We can prove that 
\begin{equation}\label{desdeee} \textbf{F}(\Gamma^{1}_{j})=\Gamma^{1}_{j+1}\text{ and }\quad \textbf{F}(\Gamma^{2}_{j})=\Gamma^{2}_{j+1}\quad \text{ for each }j\in\mathbb{Z}.\end{equation} 
Next, the projection 
\begin{align*}\Pi_{j}: \Gamma(\mathcal{M})&\rightarrow \Gamma(M_{j})\\
(\zeta_{i})_{i\in\mathbb{Z}}&\mapsto \zeta_{j}
\end{align*}
induces the isomorphism 
\begin{align*}\Pi_{j}|_{\Gamma^{r}_{j}}: \Gamma^{r}_{j}& \rightarrow \Pi_{j}(\Gamma^{r}_{j})\subseteq \Gamma(M_{j})\\
(\zeta_{i})_{i\in\mathbb{Z}}&\mapsto \zeta_{j}
\end{align*}
for $r=1,2$ and $j\in\mathbb{Z}$. Since $\Gamma(\mathcal{M})=\Gamma^{1}\oplus \Gamma^{2},$ we have 
\begin{equation}\label{defrr}
    \Gamma(M_{j})=\Pi_{j}(\Gamma^{1}_{j})=\Pi_{j}(\Gamma^{1}_{j})\oplus \Pi_{j}(\Gamma^{2}_{j}).
\end{equation}
From \eqref{desdeee} we can prove that 
\begin{equation}\label{edfffr}
    Df_{j}(\Pi_{j}(\Gamma_{j}^{r}))=\Pi_{j+1}(\Gamma_{j+1}^{r}) \quad \text{ for } j\in\mathbb{Z} \quad \text{ and } r=1,2.
\end{equation}
By the definition of $\Pi_{j}( \Gamma_{j}^{r})$, we obtain \begin{align*} \Pi_{j} (\Gamma_{j}^{1})  &= \{ \zeta\in \Gamma(M_{i}): \limsup_{n\rightarrow \infty} \Vert D\textbf{\textit{f}}^{n}(\zeta)\Vert_{\Gamma_{j+n}} ^{1/n}\leq \lambda_{1}\}\\
\Pi_{j} (\Gamma_{j}^{2})  &= \{ \zeta\in \Gamma(M_{i}): \limsup_{n\rightarrow \infty} \Vert D\textbf{\textit{f}}^{-n}(\zeta)\Vert_{\Gamma_{j-n}} ^{1/n}\leq \lambda_{2}\}.\end{align*}
It is not difficult to prove that $\Pi_j (\Gamma^{r}_{j})$ is a $C^{0}(M_{j},\mathbb{R})$ submodule of $\Gamma(M_{j})$ for $r=1,2,$ where 
\[C^{0}(M_{j},\mathbb{R})= \{ \phi : M_{j}\rightarrow \mathbb{R}: \phi \text{ is a continuous map}\} .\]
Therefore, it follows from \cite{Swan}, Theorem 2, that $ \Gamma(M_{j})$ is isomorphic to $TM_{j}$ and $\Gamma_{j}^{1}$ and  $\Gamma_{j}^{2}$ are, respectively, isomorphic to the subbundles $E_{j}^{1}$ and $E_{j}^{2}$ of $TM_{j}$ for each $j\in\mathbb{Z}$. Taking $\lambda =\max\{ \lambda_{1}, \lambda_{2}\},$ it follows from \eqref{2wew2} that there exists $c>0$ such that 
\begin{equation}\label{dcded}
   \Vert D_{p}(\mathcal{F}_{j}^{n})(v)\Vert\leq c \lambda^{n}\Vert v\Vert \text{ if }v\in (E_{j}^{1})_{p} \quad\text{and}\quad   \Vert D_{p}(\mathcal{F}_{j}^{-n})(v)\Vert\leq c \lambda^{n}\Vert v\Vert \text{ if }v\in (E_{j}^{2})_{p}.
\end{equation}
We have from \eqref{defrr}, \eqref{edfffr} and \eqref{dcded} that $\mathcal{F}$ is an Anosov family with constants $\lambda\in (0,1)$ and $c>0$ (the continuity of each $(E_{j}^{r})_{p}$ on $p$ if follows from Proposition 3.4 in \cite{Jeo1}). 

Next, we will prove that $\mathcal{F}$
satisfies the property of angles. It is clear that $\Gamma^{s} (\mathcal{M})\subseteq \Gamma^{1}$  and $\Gamma^{u} (\mathcal{M}) \subseteq \Gamma^{2}$. Let us show that $\Gamma^{1}\subseteq \Gamma^{s}(\mathcal{M})$. Suppose that there exists $\zeta=(\zeta_{i})_{i\in\mathbb{Z}}\in\Gamma^{1},$ such that $\zeta \notin \Gamma^{s}(\mathcal{M}).$ Since $T\mathcal{M}=E^{s}\oplus E^{u},$ for each $p\in M_{i}$ we have that $\zeta_{}(p)=v_{i}^{s}(p)+v_{i}^{u}(p)$, where $v_{i}^{s}(p)\in E^{s}$ and $v_{i}^{u}(p)\in E^{u}$. If $v_{i}^{u}(p)=0 $  for all $p$, then $\zeta \in \Gamma^{s}(\mathcal{M})$, consequently, there exists $p\in \mathcal{M}$ such that $v_{i}^{u}(p)\neq 0. $  Thus,
 \[\Vert \textbf{F}_{i}^{n} (\zeta_{i})(p)\Vert=  \Vert D\mathcal{F}_{i}^{n} (\zeta_{i} (p))\Vert\geq \Vert D\mathcal{F}_{i}^{n} (  v_{i}^{u}(p)  )\Vert - \Vert D\mathcal{F}_{i}^{n} (  v_{i}^{s}(p)  )\Vert \geq c^{-1}\lambda^{-n}\Vert v_{i}^{u}(p)\Vert -c\lambda^{n}\Vert v_{i}^{s}(p)\Vert.\]
This implies that $\limsup_{n\rightarrow +\infty}\Vert \textbf{F}^{n}(\zeta) \Vert \rightarrow \infty$ and therefore  $\limsup_{n\rightarrow +\infty}\Vert \textbf{F}^{n}(\zeta) \Vert ^{1/n}\geq 1$, which contradicts \eqref{2wew2}. 
Consequently,  $\zeta_{i}(p)=0$ for all $p\in \mathcal{M}$ and hence $\zeta \in \mathcal{M}$ and hence $\zeta \in \Gamma^{s}(\mathcal{M})$.  This proves that $\Gamma^{1}\subseteq \Gamma^{s}(\mathcal{M})$. Analogously we can prove that $\Gamma^{2}\subseteq \Gamma^{u}(\mathcal{M})$.  Therefore $ \Gamma(\mathcal{M})=\Gamma^{s}(\mathcal{M})\oplus \Gamma^{u}(\mathcal{M})$. It follows from Lemma \ref{lema44}
that $\mathcal{F}$ s.p.a.
Finally, by the definition of \textbf{F}, it is clear that \textbf{F} is bounded if and only if $\sup _{i\in \mathbb{Z}}\Vert D f_{i}\Vert = \infty$.
\end{proof}

Proposition \ref{multiplicativef2} provides of many examples of Anosov families  $  ( f_{i} )_{i\in\mathbb{Z}}$ with  
$\sup _{i\in \mathbb{Z}}\Vert D f_{i}\Vert = \infty$. A
question that arises from Theorem \ref{characte} is:

\begin{que} What kind of operator is \textbf{F}  if $ \mathcal{F}= ( f_{i} )_{i\in\mathbb{Z}}$ does not satisfy the property of angles and (or)
$\sup _{i\in \mathbb{Z}}\Vert D f_{i}\Vert = \infty?$\end{que}

\section{Openness of $\mathcal{A}^{2}_{b}(\mathcal{M})$}
 
As we said in  Introduction, in \cite{Jeo1} we  proved    $\mathcal{A}(\mathcal{M})$ is open in $\mathcal{D}^{1}(\mathcal{M})$ endowed with the strong topology, that is, for each 
$\mathcal{F}\in \mathcal{A}(\mathcal{M})$ there exists a sequence of positive numbers $ (\delta_{i})_{i\in\mathbb{Z}}$  such that  
$B^{1} (\mathcal{F}, (\delta_{i})_{i\in\mathbb{Z}})\subseteq \mathcal{A}(\mathcal{M}).$   In that case,   if we do not ask for any additional condition on $\mathcal{F}$, it is not always possible to take the sequence $\delta_{i}$ bounded away from zero, that is, $\delta_{i}$ could decay as $i\rightarrow \pm \infty$.    

\medskip

Set \[\mathcal{A}^{2}_{b}(\mathcal{M})=\{\mathcal{F}=(f_{i})_{i\in\mathbb{Z}}\in \mathcal{D}^{2}(\mathcal{M}): \mathcal{F} \text{ is Anosov, s.p.a.  and }\sup_{i\in\mathbb{Z}}\Vert Df_{i}\Vert_{C^{2}}<\infty\},\]
where \( \Vert \phi\Vert_{C^{2}}= \max\left\{ \Vert D \phi\Vert,  \Vert D\phi^{-1}\Vert,  \Vert D^{2}\phi\Vert, \Vert D^{2}\phi^{-1}\Vert \right\} \)   for a $ C^{2}$-diffeomorphism \(\phi.\)
The  goal of this section is to show  for any $\mathcal{F}=(f_{i})_{i\in\mathbb{Z}}\in \mathcal{A}^{2}_{b}(\mathcal{M})$, there exists a $\varepsilon>0$ such that: 
 \[  B^{2}(\mathcal{F},\varepsilon)=\{  \mathcal{G}\in \mathcal{D}^{2}(\mathcal{M}): d^{2}_{unif}(\mathcal{F},\mathcal{G})<\varepsilon \}\subseteq \mathcal{A}^{2}_{b}(\mathcal{M}).\]
 The basic neighborhood $B^{2}(\mathcal{F},\varepsilon)$ is called \textit{uniform}.

\medskip

From now on,    $\mathcal{F}$ will be an Anosov family in $\mathcal{A}^{2}_{b}(\mathcal{M})$ with constants $\lambda  \in (0,1)$ and $c \geq1$.  
 
\medskip

We will work using exponential charts (see Section 2). 
Take 
\[ S_{\mathcal{F}}=  \sup_{i\in \mathbb{Z}}\Vert D f_{i}\Vert_{C^{2}} \quad\text{ and }\quad r \in (0,\varrho/20S_{\mathcal{F}}).\] 
  Fix  $\mathcal{G}=(g_{i})_{i\in\mathbb{Z}}\in B ^{2}(\mathcal{F}, r) $. If $p,q\in M_{i}$  and $d(p,q)\leq r$, then we have \(d(f_{i}(p),g_{i}(q))<\varrho/2\) and \(  d(f_{i}^{-1}(p),g_{i}^{-1}(q))<\varrho/2.\) Therefore,
\( g_{i}(\overline{B(p,r)})\subseteq B(f_{i} (p),\varrho/2)\) and \( g_{i}^{-1}(\overline{B(f_{i}(p),r )})\subseteq B(p,\varrho/2).\) 
Consequently,  \begin{align*}\tilde{g}_{p}= &\, \text{exp}_{\textbf{\textit{f}}(p)}^{-1}\circ \mathcal{G}\circ\text{exp}_{p} :B (0_{p},r)\rightarrow B (0_{\mathcal{F}(p)},\varrho/2)
\\ 
 \text{and }\quad\tilde{g}_{p}^{-1} & =  \text{exp}_{p}^{-1}\circ \mathcal{G} ^{-1}\circ\text{exp}_{\mathcal{F}(p)} :B (0_{\mathcal{F}(p)},r)\rightarrow B (0_{p},\varrho/2),
\end{align*}
  are well-defined for each $p\in \mathcal{M}$.

\begin{propo}\label{primeirolem} For $\tau>0$, there exist $\tilde{r}>0$, $\tilde{\delta}>0$  and  $X_{i}=\{p_{1,i},\dots,p_{m_{i},i}\}\subseteq M_{i}$   for each $i\in\mathbb{Z}$, such that  $M_{i}=\cup _{j=1}^{m_{i}}B(p_{j,i},\tilde{r})$  for each $i\in\mathbb{Z}$ and, furthermore, for every $\mathcal{G}\in B^{2}(\mathcal{F},\tilde{\delta})$, we have that
\begin{equation*}\label{primeirolemt}\sigma(\tilde{r},\mathcal{G})= \sup_{p\in X_{i},i\in\mathbb{Z}} \left\{\sup_{z\in B(0_{p},\tilde{r})}\Vert D_{z}( D_{0} (\tilde{f}_{p})  - \tilde{g}_{p}) \Vert_{\star} ,\sup_{z\in B(0_{f_{i}(p)},\tilde{r})}\Vert D_{z}(D_{0} (\tilde{f}_{p}^{-1})  - \tilde{g}^{-1} _{p}) \Vert_{\star} \right\} <\tau.
\end{equation*}  
\end{propo}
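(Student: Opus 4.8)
The plan is to exploit that $D_0(\tilde f_p)$ is a \emph{linear} map, so that $D_z(D_0(\tilde f_p))=D_0(\tilde f_p)$ for every $z$, and hence
\[
D_z\big(D_0(\tilde f_p)-\tilde g_p\big)=D_0(\tilde f_p)-D_z(\tilde g_p).
\]
Writing $D_0(\tilde f_p)-D_z(\tilde g_p)=\big(D_0(\tilde f_p)-D_z(\tilde f_p)\big)+\big(D_z(\tilde f_p)-D_z(\tilde g_p)\big)$ and using the triangle inequality, it suffices to bound each summand by $\tau/2$ (after passing to $\Vert\cdot\Vert_{\star}$ through the uniform equivalence \eqref{metriceqe}, which costs a factor $C^{2}$ on operator norms), uniformly in $p\in\mathcal M$ and $z\in B(0_p,\tilde r)$; the estimates for the inverses $\tilde f_p^{-1},\tilde g_p^{-1}$ are identical, since $\tilde f_p$ and $\tilde g_p$ are read in the \emph{same} exponential charts, both centered at $p$ and at $\mathcal F(p)$.

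First I would control the term $D_0(\tilde f_p)-D_z(\tilde f_p)$, which measures the variation of the first derivative of the chart representation $\tilde f_p=\exp_{\mathcal F(p)}^{-1}\circ f_i\circ\exp_p$. Because $\mathcal F\in\mathcal A^{2}_{b}(\mathcal M)$ we have $\sup_i\Vert f_i\Vert_{C^{2}}=S_{\mathcal F}<\infty$, and because the standing hypothesis $\varrho=\inf_i\varrho_i>0$ (Remark \ref{exponential}) gives bounded geometry, the exponential charts and their inverses have $C^{2}$-norms bounded uniformly in $p$; chaining these bounds produces a constant $K$ with $\Vert D_w^{2}\tilde f_p\Vert\le K$ for all $p\in\mathcal M$ and all $w\in B(0_p,r)$. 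The mean value inequality then yields $\Vert D_0(\tilde f_p)-D_z(\tilde f_p)\Vert\le K|z|\le K\tilde r$, so choosing $\tilde r>0$ small enough (with $\tilde r<r$) makes $C^{2}K\tilde r<\tau/2$ uniformly.

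Next I would control $D_z(\tilde f_p)-D_z(\tilde g_p)$ using the closeness of $\mathcal G$ to $\mathcal F$. Since $\tilde f_p$ and $\tilde g_p$ are expressed in the same charts, the definition of the $C^{1}$-part of $d^{2}$ in \eqref{edssc2}--\eqref{edssc3}, together with the uniformly bounded distortion of the fixed covering charts used there, shows that $d^{2}(f_i,g_i)<\tilde\delta$ forces $\sup_z\Vert D_z(\tilde f_p)-D_z(\tilde g_p)\Vert\le\kappa\,\tilde\delta$ for a constant $\kappa$ independent of $i$. Hence choosing $\tilde\delta>0$ with $C^{2}\kappa\tilde\delta<\tau/2$ handles the second summand. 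Since $d^{2}_{unif}(\mathcal F,\mathcal G)<\tilde\delta$ (Definition \ref{weds}) gives $d^{2}(f_i,g_i)<\tilde\delta$ for \emph{all} $i$ simultaneously, this bound holds for every $\mathcal G\in B^{2}(\mathcal F,\tilde\delta)$.

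Finally, for each $i$ the manifold $M_i$ is compact, so finitely many balls $B(p_{j,i},\tilde r)$, $j=1,\dots,m_i$, cover it; set $X_i=\{p_{1,i},\dots,p_{m_i,i}\}$. Taking the supremum over $p\in X_i$, $i\in\mathbb Z$ and $z$ in the respective balls in the two estimates above gives $\sigma(\tilde r,\mathcal G)<\tau/2+\tau/2=\tau$, as required. The main obstacle is \emph{uniformity in} $i\in\mathbb Z$: since the index set is infinite we cannot rely on compactness of a single manifold, and the constants $K$, $\kappa$, $\tilde r$, $\tilde\delta$ must be chosen independently of $i$. This is precisely what the three uniform hypotheses provide---the uniform injectivity radius $\varrho>0$ keeps the exponential charts uniformly nondegenerate, the uniform $C^{2}$-bound $S_{\mathcal F}<\infty$ keeps $\Vert D^{2}\tilde f_p\Vert$ uniformly bounded, and the uniform topology $d^{2}_{unif}$ keeps $\mathcal G$ uniformly close to $\mathcal F$ along the whole family.
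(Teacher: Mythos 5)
Your proposal is correct and follows essentially the same route as the paper: the same decomposition $D_0(\tilde f_p)-D_z(\tilde g_p)=\bigl(D_0(\tilde f_p)-D_z(\tilde f_p)\bigr)+\bigl(D_z(\tilde f_p)-D_z(\tilde g_p)\bigr)$, the first term controlled by $K\Vert z\Vert$ via the uniform $C^2$ bound $S_{\mathcal F}$ (the paper cites Liu--Qian for the constant $K[1+S_{\mathcal F}]S_{\mathcal F}$), the second by $d^2(f_i,g_i)<\tilde\delta$, each made $<\tau/2$, followed by the same compactness argument to extract the finite sets $X_i$. Your explicit tracking of the $C^2$ factor from the equivalence of $\Vert\cdot\Vert$ and $\Vert\cdot\Vert_{\star}$ and of the chart-distortion constant $\kappa$ is slightly more careful than the paper's write-up, but it is a refinement of the same argument, not a different one.
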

 \begin{proof} Fix $p\in M_{i}.$ Let   $z\in B(0_{p},r )$ and $(v,w)\in E^{s}\oplus E^{u}.$  There exists $K>0$ (which does not depends on $p$), such that $$\Vert D_{0}   (\tilde{f_{p}})  (v,w)  -D_{z} (\tilde{f}_{p})  (v,w) \Vert_{\star} \leq K[1+\Vert Df_{i}\Vert_{\star}] \Vert D^{2}f_{i}\Vert_{\star}\Vert z\Vert _{\ast}\Vert (v,w)\Vert_{\star}$$ (see \cite{Liu2}, p. 50).  
 Thus, 
\begin{align*}\Vert D_{z}( D _{0}(\tilde{f}_{p})  - \tilde{g}_{p}) (v,w)\Vert_{\star}
&= \Vert D  _{0} (\tilde{f_{p}})  (v,w)  -D_{z}  (\tilde{g}_{p}) (v,w) \Vert_{\star} \\
&\leq  \Vert D_{0}   (\tilde{f_{p}})  (v,w)  -D _{z}(\tilde{f}_{p})  (v,w) \Vert_{\star}+\Vert D_{z}   (\tilde{f_{p}}-\tilde{g}_{p})  (v,w)  \Vert_{\star}\\
& \leq K[1+S_{\mathcal{F}}] S_{\mathcal{F}}\Vert z\Vert_{\star} \Vert (v,w)\Vert_{\star}+d^{2}(f_{i},g_{i})   \Vert(v,w)  \Vert_{\star}.
\end{align*} 
Therefore, if $\tilde{\delta}_{1}<\tau/2$ and $\tilde{r}_{1} < \tau/2( K(1+S_{\mathcal{F}})S_{\mathcal{F}})$, then  for each $\mathcal{G}\in B^{2}(\mathcal{F},\tilde{\delta}_{1})$ and $z\in B(0_{p},\tilde{r}_{1} ),$  we have    $  \Vert D_{z}( D_{0} (\tilde{f}_{p})  - \tilde{g}_{p}) \Vert< \tau$.  

Analogously we can prove   there exist    $\tilde{\delta}_{2}>0$ and $ \tilde{r}_{2}>0$ such that, if $\mathcal{G}\in B^{1}(\mathcal{F},\tilde{\delta}_{2})$ and $z\in B(0_{f_{i}(p)},\tilde{r}_{2})$,   then   $ \Vert D_{z}(D _{0}(\tilde{f}_{p}^{-1})  - \tilde{g}^{-1} _{p}) \Vert_{\star}\leq \tau$. 

Take $\tilde{r}=\min\{\tilde{r}_{1},\tilde{r}_{2}\}$ and $\tilde{\delta}=\min\{\tilde{\delta}_{1},\tilde{\delta}_{2}\}$.  Notice that neither $\tilde{r}$ nor $\tilde{\delta}$ depend on $p$.    Since $M_{i}$ is compact, we can choose a finite subset  $X_{i}=\{p_{1,i},\dots,p_{m_{i},i}\}\subseteq M_{i}$   such that $M_{i}=\cup _{j=1}^{m_{i}}B(p_{j,i},\tilde{r})$ for each $i\in\mathbb{Z}$, which proves the proposition. 
\end{proof}

From now on, we will fix  $\alpha\in (0,\frac{1-\tilde{\lambda}}{1+\tilde{\lambda}})$. Set
 \begin{equation*} 
 \sigma_{A}:=\min\left\{ \frac{(\tilde{\lambda}^{-1}-\tilde{\lambda})\alpha}{2(1+\alpha)^{2}},\frac{\tilde{\lambda}^{-1}(1-\alpha)-(1+\alpha)\alpha}{2(1+\alpha)}\right\}.
\end{equation*}
Furthermore, we will suppose that   $\tilde{r}$ and $\tilde{\delta}\leq \frac{1}{2}   \min\{\tilde{r},\sigma_{A} \}
  $  are small enough such that $\sigma(\tilde{r})<\sigma_{A}$.   Fix $p\in X_{i}$ for some $i\in \mathbb{Z}$. For $q\in B(p,\tilde{r})$, consider $z=\text{exp}_{p}^{-1}(q)$, \( \tilde{E}^{s}_{z}=D_{q}(\text{exp}_{p}^{-1}\circ \text{exp}_{q}) (E_{q}^{s})\), \( \tilde{E}^{u}_{z}=D_{q}(\text{exp}_{p}^{-1}\circ \text{exp}_{q}) (E_{q}^{u})\),    
\begin{align*}K_{\alpha,\mathcal{F},z}^{s} &=\{(v,w)\in \tilde{E}_{z}^{s}\oplus \tilde{E}_{z}^{u}: \Vert w\Vert_{\star}< \alpha \Vert v \Vert_{\star}\}\cup\{(0,0)\},  \\
\text{and }\quad K_{\alpha,\mathcal{F},z}^{u} &=\{(v,w)\in \tilde{E}_{z}^{s}\oplus \tilde{E}_{z}^{u}: \Vert v\Vert_{\star}< \alpha \Vert w\Vert_{\star}\}\cup\{(0,0)\} .\end{align*}

 The next two lemmas can be shown   analogously to the Lemmas 4.2 and 4.3 in \cite{Jeo1}, respectively.

\begin{lem}\label{limitadocones12} Let  $\mathcal{G}\in B^{1}(\mathcal{F},\tilde{\delta})$. For each $i\in\mathbb{Z}$, if $p\in X_{i}$ we have: 
\begin{enumerate}[i.]
\item  $D_{z} (\tilde{g}_{p}
) (\overline{K_{\alpha,\mathcal{F},z}^{u}})
\subseteq K_{\alpha,\mathcal{F},\tilde{g}_{p}(z)}^{u}$ for all   $ z\in B  (0_{p},\tilde{r})$, and
\item $
 D_{z}(\tilde{g}_{p})  ^{-1}(\overline{K_{\alpha,\mathcal{F},\tilde{g}_{p}(z)}^{s}})
\subseteq K_{\alpha,\mathcal{F},z}^{s}$ for all   $ z\in B  (0_{\mathcal{F}(p)},\tilde{r})$.
\end{enumerate}
See Figure \ref{conosinvariantes}.
\end{lem}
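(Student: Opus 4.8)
The plan is to reduce the statement to a single quantitative cone-invariance estimate for linear maps that lie within $\sigma_{A}$ of the hyperbolic model $A:=D_{0}(\tilde{f}_{p})$, and then to check that the first entry in the definition of $\sigma_{A}$ supplies exactly the required slack. First I would record the model estimates. Since $\tilde{f}_{p}=\text{exp}_{\mathcal{F}(p)}^{-1}\circ\mathcal{F}\circ\text{exp}_{p}$ and the exponential maps have derivative the identity at the origin, $A=D_{p}\mathcal{F}$ under the canonical identifications, so by the $D\mathcal{F}$-invariance of the splitting $A$ carries $E^{s}_{p}$ into $E^{s}_{\mathcal{F}(p)}$ and $E^{u}_{p}$ into $E^{u}_{\mathcal{F}(p)}$. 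Because $\mathcal{F}$ is strictly Anosov in the norm $\Vert\cdot\Vert_{\star}$ with constant $\tilde{\lambda}$ (Remark \ref{remarnormas}), $A$ contracts by at most $\tilde{\lambda}$ on $E^{s}_{p}$ and expands by at least $\tilde{\lambda}^{-1}$ on $E^{u}_{p}$.

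The core computation is then the following. Using the max-norm \eqref{metriceq}, take $(v,w)\in\overline{K_{\alpha,\mathcal{F},z}^{u}}$ normalized so that $\Vert w\Vert_{\star}=1$ and $\Vert v\Vert_{\star}\leq\alpha$. Proposition \ref{primeirolem} gives, for $\mathcal{G}\in B^{2}(\mathcal{F},\tilde{\delta})$ and $z\in B(0_{p},\tilde{r})$, the bound $\Vert A-D_{z}(\tilde{g}_{p})\Vert_{\star}\leq\sigma(\tilde{r})<\sigma_{A}$ (note that $D_{z}(D_{0}(\tilde{f}_{p})-\tilde{g}_{p})=A-D_{z}(\tilde{g}_{p})$, since $A$ is linear). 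Writing $D_{z}(\tilde{g}_{p})(v,w)=(v',w')$ in the image splitting, the estimates above yield $\Vert v'\Vert_{\star}\leq\tilde{\lambda}\alpha+\sigma_{A}$ and $\Vert w'\Vert_{\star}\geq\tilde{\lambda}^{-1}-\sigma_{A}$, so that membership in $K_{\alpha,\mathcal{F},\tilde{g}_{p}(z)}^{u}$ amounts to the scalar inequality $\tilde{\lambda}\alpha+\sigma_{A}<\alpha(\tilde{\lambda}^{-1}-\sigma_{A})$, i.e.\ $\sigma_{A}(1+\alpha)<\alpha(\tilde{\lambda}^{-1}-\tilde{\lambda})$. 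This is precisely what the first term $\frac{(\tilde{\lambda}^{-1}-\tilde{\lambda})\alpha}{2(1+\alpha)^{2}}$ in the definition of $\sigma_{A}$ guarantees, the extra factors $2$ and $(1+\alpha)$ in the denominator leaving room for the corrections discussed below. Statement (ii) for the stable cones follows by the symmetric argument applied to $\tilde{g}_{p}^{-1}$, whose model $A^{-1}$ expands $E^{s}$ by at least $\tilde{\lambda}^{-1}$ and contracts $E^{u}$.

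The point requiring care is that the cones at the source $z$ and at the image $\tilde{g}_{p}(z)$ are built from the \emph{transported} subspaces $\tilde{E}^{s}_{z},\tilde{E}^{u}_{z}$ rather than from the base splitting $E^{s}_{p}\oplus E^{u}_{p}$, with respect to which $A$ is block-diagonal. I would control this by continuity of the splitting together with the smallness of $\tilde{r}$, so that $\tilde{E}^{s/u}_{z}$ stays within an arbitrarily small angle of $E^{s/u}_{p}$, and likewise at the target; the resulting angular corrections are absorbed into the second term of $\sigma_{A}$. The main obstacle, and the reason the bounded-geometry hypotheses are needed, is that this angular control must be uniform across the infinite index set $i\in\mathbb{Z}$: continuity of the bundle over a single compact $M_{i}$ does not by itself produce one $\tilde{r}$ valid for all $i$. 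Here the s.p.a.\ hypothesis (a uniform angle bound, Definition \ref{propang2}), the uniform bound $\sup_{i}\Vert Df_{i}\Vert_{C^{2}}<\infty$, and the positive injectivity radius $\varrho$ must be used jointly, exactly as in Lemmas 4.2 and 4.3 of \cite{Jeo1}.
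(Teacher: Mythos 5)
Your proposal is correct and follows essentially the same route as the paper, which in fact gives no proof here but defers to Lemmas 4.2 and 4.3 of \cite{Jeo1}: the argument there is exactly your computation, comparing $D_{z}(\tilde{g}_{p})$ with the block-diagonal model $A=D_{0}(\tilde{f}_{p})$ via Proposition \ref{primeirolem} and checking that $\sigma_{A}(1+\alpha)<\alpha(\tilde{\lambda}^{-1}-\tilde{\lambda})$, which the first term in the definition of $\sigma_{A}$ guarantees with room to spare. The one point you flag but do not carry out --- uniform (over $i\in\mathbb{Z}$) closeness of the transported subspaces $\tilde{E}^{s/u}_{z}$ to $E^{s/u}_{p}$ --- is likewise left implicit in the paper and is correctly attributed by you to the bounded-geometry and s.p.a.\ hypotheses; the only minor slip is that this slack is absorbed by the factor $2(1+\alpha)$ in the first term of $\sigma_{A}$ rather than by its second term, which is needed for the expansion estimate of Lemma \ref{limitadocontract12}.
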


\begin{figure}[ht] 
\begin{center}

\begin{tikzpicture}
\draw[black!7,fill=black!10, ultra thin] (-7.9,-0.4)rectangle (-4.1,3.4);
\draw[black, fill=black!40, thin] (-6,1.5) -- (-7,3.4) -- (-5,3.4) -- cycle;
\draw[black, fill=black!40, thin] (-6,1.5) -- (-7,-0.4) -- (-5,-0.4) -- cycle;
\draw[black, fill=black!40, thin] (-6,1.5) -- (-4.1,2.5) -- (-4.1,0.5) -- cycle;
\draw[black, fill=black!70, thin] (-6,1.5) -- (-4.1,2.2) -- (-4.1,0.9) -- cycle;
\draw[black, fill=black!40, thin] (-6,1.5) -- (-7.9,2.5) -- (-7.9,0.5) -- cycle;
\draw[black, fill=black!70, thin] (-6,1.5) -- (-7.9,2.1) -- (-7.9,0.8) -- cycle;

\draw[black!7,fill=black!10, ultra thin] (-2.4,-0.4)rectangle (1.4,3.4);
\draw[black, fill=black!40, thin] (-0.5,1.5) -- (-1.5,3.4) -- (0.5,3.4) -- cycle;
\draw[black, fill=black!70, thin] (-0.5,1.5) -- (-1.3,3.4) -- (0.1,3.4) -- cycle;
\draw[black, fill=black!40, thin] (-0.5,1.5) -- (-1.5,-0.4) -- (0.5,-0.4) -- cycle;
\draw[black, fill=black!70, thin] (-0.5,1.5) -- (-1.1,-0.4) -- (0.3,-0.4) -- cycle;
\draw[black, fill=black!40, thin] (-0.5,1.5) -- (1.4,2.5) -- (1.4,0.5) -- cycle;
\draw[black, fill=black!40, thin] (-0.5,1.5) -- (-2.4,2.5) -- (-2.4,0.5) -- cycle;

\draw[<->] (-6,-0.5) -- (-6,3.5);
\draw[<->] (-8,1.5) -- (-4,1.5); 
\draw (-6.2,4.1) node[below] {\quad{\small $\tilde{E}_{z}^{u}$}}; 
\draw (-3.9,1.7) node[below] {\quad{\small $\tilde{E}_{z}^{s}$}};
\draw (-4.6,3.3) node[below] {\small $T_{z}M$};
\draw (-3.2,2.45) node[below] {\small $D(\tilde{g}_{p})_{q} ^{-1}$};
\draw[<-, very thick] (-4,1.8) -- (-2.5,1.8);

\draw[<->] (-0.5,-0.5) -- (-0.5,3.5);
\draw[<->] (-2.5,1.5) -- (1.5,1.5);
\draw (-0.7,4.1) node[below] {\quad{\small $\tilde{E}_{q}^{u}$}};
\draw (1.6,1.7) node[below] {\quad{\small $\tilde{E}_{q}^{s}$}};
\draw (0.7,3.3) node[below] {\quad{\small $T_{q}M$}};
\draw (-3.2,3.7) node[below] {\small $D (\tilde{g}_{p}
)_{z}$}; 
\draw[very thick, ->] (-5,3.6) .. controls (-3.5,3.8) and (-2.5,3.8) .. (-1,3.6);
 \draw (-7,4.1) node[below] {\small $K_{\alpha,\mathcal{F},z}^{u}$};
\draw (-8.7,2.7) node[below] {\quad{\small $K_{\alpha,\mathcal{F},z}^{s}$}};
\draw (0.7,4.1) node[below] {\small $K_{\alpha,\mathcal{F},q}^{u}$};
\draw (1.7,2.7) node[below] {\quad{\small $K_{\alpha,\mathcal{F},q}^{s}$}};
\end{tikzpicture} 
\end{center}
\caption{Stable and unstable invariant $\alpha$-cones. $q=\tilde{g}_{p}(z)$} \label{conosinvariantes}
\end{figure}
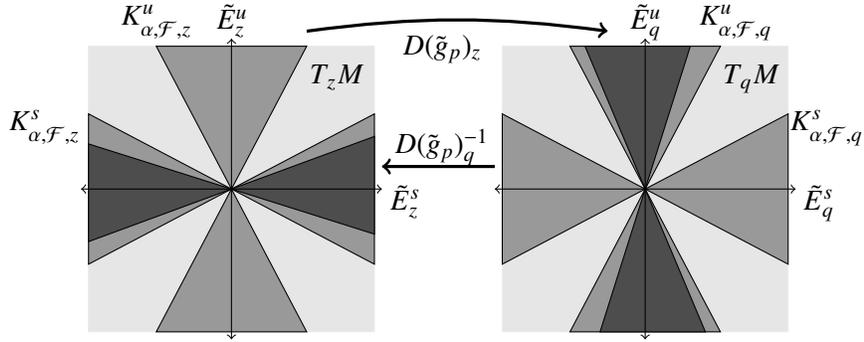
 
\begin{lem}\label{limitadocontract12} There exists $\eta\in (0,1)$ such that, 
if $\mathcal{G}\in B^{2}(\mathcal{F},\tilde{\delta})$ then,  for $p\in X_{i}$ and    $z \in B (0_{p},\tilde{r})$ we have
\begin{enumerate}[i.]
\item $\Vert D _{z}(\tilde{g}_{p}) (y)\Vert_{\star} \geq \eta^{-1}\Vert y \Vert_{\star}$ for each $y\in \overline{K_{\alpha,\mathcal{F},z}^{u}}$;
\item $\Vert   D_{z}(\tilde{g}^{-1} _{p}) (y)\Vert_{\star}\geq \eta^{-1}\Vert y\Vert_{\star}$ for each $y\in \overline{K_{\alpha,\mathcal{F},\tilde{g}_{p}(z)}^{s}}$.
\end{enumerate}
\end{lem}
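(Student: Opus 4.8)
The plan is to run the standard invariant-cone expansion estimate, comparing the perturbed local map $\tilde{g}_p$ with the constant hyperbolic linear map $L=D_0(\tilde{f}_p)$, which in the charts is essentially $D_pf_i$ and which, by Remark \ref{remarnormas}, is strictly hyperbolic in the $\Vert\cdot\Vert_\star$ norm: it contracts the stable factor by $\tilde{\lambda}$ and expands the unstable factor by $\tilde{\lambda}^{-1}$. First I would record that, by Proposition \ref{primeirolem} together with the standing assumption $\sigma(\tilde{r})<\sigma_A$, one has $\Vert D_z(\tilde{g}_p)-L\Vert_\star<\sigma_A$ for every $z\in B(0_p,\tilde{r})$. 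Thus $D_z(\tilde{g}_p)$ is a uniform $\sigma_A$-perturbation of a single hyperbolic linear map, and the whole lemma reduces to a linear-algebra estimate on cones with a fixed error budget $\sigma_A$.

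For item (i) I would take $y=(v,w)\in\overline{K^u_{\alpha,\mathcal{F},z}}$, so that $\Vert v\Vert_\star\leq\alpha\Vert w\Vert_\star$; since $\alpha<1$ and $\Vert\cdot\Vert_\star$ is the max (box) norm of \eqref{metriceq}, this forces $\Vert y\Vert_\star=\Vert w\Vert_\star$. Writing $Ly=(L_sv,L_uw)$ along the invariant splitting and letting $\pi^u$ be the norm-one projection onto the unstable factor, strict hyperbolicity gives $\Vert\pi^u(Ly)\Vert_\star=\Vert L_uw\Vert_\star\geq\tilde{\lambda}^{-1}\Vert w\Vert_\star$. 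The triangle inequality with the perturbation bound then yields
\[\Vert D_z(\tilde{g}_p)(y)\Vert_\star\geq\Vert\pi^u(D_z(\tilde{g}_p)(y))\Vert_\star\geq\Vert\pi^u(Ly)\Vert_\star-\Vert D_z(\tilde{g}_p)-L\Vert_\star\,\Vert y\Vert_\star\geq(\tilde{\lambda}^{-1}-\sigma_A)\Vert y\Vert_\star.\]
So it suffices to set $\eta^{-1}=\tilde{\lambda}^{-1}-\sigma_A$ and verify $\eta^{-1}>1$, i.e. $\sigma_A<\tilde{\lambda}^{-1}-1$. Here the precise value of $\sigma_A$ enters: its first branch gives $\sigma_A\leq\frac{(\tilde{\lambda}^{-1}-\tilde{\lambda})\alpha}{2(1+\alpha)^2}<\frac{\tilde{\lambda}^{-1}-\tilde{\lambda}}{2}$, and $\frac{\tilde{\lambda}^{-1}-\tilde{\lambda}}{2}<\tilde{\lambda}^{-1}-1$ (equivalent to $(1-\tilde{\lambda})^2>0$), so the required inequality holds with room to spare. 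Item (ii) follows symmetrically, replacing $\tilde{g}_p$ by $\tilde{g}_p^{-1}$, $L$ by $D_0(\tilde{f}_p^{-1})$, and exchanging the roles of the stable and unstable factors; Proposition \ref{primeirolem} supplies exactly the matching bound for the inverse.

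The hard part is not the two-line estimate but the bookkeeping hidden inside it. The cones $K^u_{\alpha,\mathcal{F},z}$ are built from the transported splitting $\tilde{E}^s_z\oplus\tilde{E}^u_z$, whereas the comparison map $L=D_0(\tilde{f}_p)$ respects the splitting at the center $0_p$; since $z$ ranges over $B(0_p,\tilde{r})$ and the identification $D_q(\text{exp}_p^{-1}\circ\text{exp}_q)$ is $C^0$-close to the identity when $\tilde{r}$ is small, the transported subspaces stay close to those at the base point, and I would absorb the resulting angle discrepancy into the same $\sigma_A$-budget by further shrinking $\tilde{r}$. This is precisely the point where the two competing terms defining $\sigma_A$ (one controlling the expansion/contraction gap, the other the admissible cone half-angle $\alpha$) must both be respected, which is why the constant is engineered as a minimum. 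Since all estimates are uniform over $p\in X_i$ and $i\in\mathbb{Z}$, a single $\eta$ serves the whole family, exactly as in Lemmas 4.2 and 4.3 of \cite{Jeo1}.
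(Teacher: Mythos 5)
Your argument is correct and is essentially the paper's own route: the paper gives no written proof here, deferring to Lemmas 4.2--4.3 of \cite{Jeo1}, which establish exactly this cone-expansion estimate by comparing $D_{z}(\tilde{g}_{p})$ with the strictly hyperbolic linearization $D_{0}(\tilde{f}_{p})$ in the box norm $\Vert\cdot\Vert_{\star}$ and using the $\sigma_{A}$-budget from Proposition \ref{primeirolem}. Your verification that $\tilde{\lambda}^{-1}-\sigma_{A}>1$, and your remark that the discrepancy between the transported splitting $\tilde{E}^{s}_{z}\oplus\tilde{E}^{u}_{z}$ and the splitting at the base point must be absorbed by shrinking $\tilde{r}$, match the intended bookkeeping.
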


Now, take $\xi\leq\frac{1}{2C}   \min\{\tilde{r},\sigma_{A} \}$ (see \eqref{metriceqe}).  Using the Lemmas \ref{limitadocones12} and \ref{limitadocontract12} we can prove that:

\begin{lem}\label{invariantesespacios}
Let $\mathcal{G}=(g_{i})_{i\in\mathbb{Z}}\in B^{2}(\mathcal{F}, \xi)$.
For each  $p\in \mathcal{M}$, take
 \begin{equation*}  F^{s}_{p}=\bigcap_{n=0}^{\infty}D_{\mathcal{G}^{n}(p)}(\mathcal{G}^{-n} )  (\overline{K_{\alpha,\mathcal{F},\mathcal{G}^{n}(p)}^{s}})\quad\text{ and }\quad F^{u}_{p}=\bigcap_{n=0}^{\infty}D_{\mathcal{G}^{-n}(p)}(\mathcal{G}^{n})  (\overline{K_{\alpha,\mathcal{F},\mathcal{G}^{-n}(p)}^{u}}).
 \end{equation*}
 The families  $F^{s}_{p}$ and $F^{u}_{p}$  are   $D\mathcal{G}$-invariant   subspaces and $T_{p}\mathcal{M}=F^{s}_{p}\oplus F^{u}_{p}$ for each $p\in \mathcal{M}$. Furthermore, there exist $\eta\in (0,1)$ and   $\tilde{c}\geq1$ such that,  with the splitting $T_{p}\mathcal{M}=F^{s}_{p}\oplus F^{u}_{p}$,   $\mathcal{G}$ is an Anosov family with constant $\eta$ and   $\tilde{c}$, which  s.p.a. (see Figure \ref{intercone}).   
\end{lem}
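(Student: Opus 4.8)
The plan is to run the classical cone-field construction of invariant subbundles, now in the non-stationary setting, using the two previous lemmas as the engine. First I would fix $p\in\mathcal{M}$ and set, for $n\geq 0$, $C_n^{u}(p)=D_{\mathcal{G}^{-n}(p)}(\mathcal{G}^{n})(\overline{K_{\alpha,\mathcal{F},\mathcal{G}^{-n}(p)}^{u}})$ and $C_n^{s}(p)=D_{\mathcal{G}^{n}(p)}(\mathcal{G}^{-n})(\overline{K_{\alpha,\mathcal{F},\mathcal{G}^{n}(p)}^{s}})$, so that $F_p^{u}=\bigcap_n C_n^{u}(p)$ and $F_p^{s}=\bigcap_n C_n^{s}(p)$. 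The first point is nesting: writing the iterate as $\mathcal{G}^{n+1}=\mathcal{G}^{n}\circ\mathcal{G}$ in the chart coordinates and applying Lemma \ref{limitadocones12}(i) one gets $C_{n+1}^{u}(p)\subseteq C_n^{u}(p)$, and dually Lemma \ref{limitadocones12}(ii) gives $C_{n+1}^{s}(p)\subseteq C_n^{s}(p)$. Hence each is a decreasing sequence of closed cones, so both intersections are nonempty closed cones containing the origin.

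Next I would show that each intersection is in fact a linear subspace of the expected dimension. The clean way is the graph-transform viewpoint: every $\dim E^{u}$-subspace sitting inside $\overline{K_{\alpha,\mathcal{F},z}^{u}}$ is the graph of a linear map $L\colon \tilde{E}^{u}_{z}\to \tilde{E}^{s}_{z}$ with $\Vert L\Vert_{\star}\leq\alpha$, and the expansion estimates of Lemma \ref{limitadocontract12} make the induced action on such graphs a uniform contraction, with factor bounded by a fixed $\theta<1$ depending only on $\eta$ and $\alpha$ (not on $p$ nor on $\mathcal{G}\in B^{2}(\mathcal{F},\xi)$). Consequently the subspaces $D_{\mathcal{G}^{-n}(p)}(\mathcal{G}^{n})(\tilde{E}^{u}_{\mathcal{G}^{-n}(p)})$ form a Cauchy sequence whose limit is independent of the starting graph in the cone; that limit is exactly $F_p^{u}$, a subspace of dimension $\dim E^{u}$, and the aperture of $C_n^{u}(p)$ measured around this limit decays like $\theta^{n}$. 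The same argument applied to Lemma \ref{limitadocontract12}(ii) gives that $F_p^{s}$ is a subspace of dimension $\dim E^{s}$. Since $F_p^{s}\subseteq\overline{K_{\alpha,\mathcal{F},0_p}^{s}}$ and $F_p^{u}\subseteq\overline{K_{\alpha,\mathcal{F},0_p}^{u}}$ while $\alpha<1$ forces these two cones to meet only at the origin, we get $F_p^{s}\cap F_p^{u}=\{0\}$; a dimension count then yields $T_p\mathcal{M}=F_p^{s}\oplus F_p^{u}$.

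Invariance is a bookkeeping step: applying $D_p\mathcal{G}$ to $C_n^{u}(p)$ and reindexing via $\mathcal{G}^{-n}(p)=\mathcal{G}^{-(n+1)}(\mathcal{G}(p))$ gives $D_p\mathcal{G}(C_n^{u}(p))=C_{n+1}^{u}(\mathcal{G}(p))$, and intersecting over $n$ (the dropped first term is harmless since the sequence decreases) yields $D_p\mathcal{G}(F_p^{u})=F_{\mathcal{G}(p)}^{u}$; the stable case is identical. For the hyperbolic rates I would iterate Lemma \ref{limitadocontract12}: for $v\in F_p^{u}\subseteq\overline{K^{u}}$ the inequality $\Vert D_z(\tilde{g}_p)(y)\Vert_{\star}\geq\eta^{-1}\Vert y\Vert_{\star}$ compounds to $\Vert D_p(\mathcal{G}^{n})(v)\Vert_{\star}\geq\eta^{-n}\Vert v\Vert_{\star}$, equivalently $\Vert D_p(\mathcal{G}^{-n})(v)\Vert_{\star}\leq\eta^{n}\Vert v\Vert_{\star}$, and symmetrically for $v\in F_p^{s}$; passing from $\Vert\cdot\Vert_{\star}$ back to $\Vert\cdot\Vert$ through the uniform equivalence \eqref{metriceqe} and the bounded chart distortion (controlled by $S_{\mathcal{F}}$ and $\varrho$) absorbs the loss into a single constant $\tilde{c}\geq 1$. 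This shows $\mathcal{G}$ is Anosov with constants $\eta$ and $\tilde{c}$. Finally, s.p.a. is immediate: $F_p^{s}$ and $F_p^{u}$ lie in the fixed-aperture cones $\overline{K_{\alpha,\mathcal{F},0_p}^{s}}$ and $\overline{K_{\alpha,\mathcal{F},0_p}^{u}}$ with $\alpha<1$, so the angle between them is bounded below by a positive constant depending only on $\alpha$; continuity of $p\mapsto F_p^{s},F_p^{u}$ follows as in Proposition 3.4 of \cite{Jeo1}.

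The main obstacle is the uniformity of every estimate. Because the index set is $\mathbb{Z}$ and the manifolds $M_{i}$ and the maps $f_{i}$ vary, I must ensure that the graph-transform contraction rate $\theta$, the aperture decay, and the final constant $\tilde{c}$ are fixed once and for all, independently of $p\in\mathcal{M}$, of $i\in\mathbb{Z}$, and of the perturbation $\mathcal{G}\in B^{2}(\mathcal{F},\xi)$. This is exactly what the uniform choices of $\tilde{r}$, $\tilde{\delta}$, $\sigma_{A}$ and the norm $\Vert\cdot\Vert_{\star}$ from Remark \ref{remarnormas} are designed to guarantee, and is the feature that separates the present uniform-topology statement from the strong-topology version in \cite{Jeo1}. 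Keeping these constants uniform while composing an unbounded number of exponential charts, whose distortion is controlled only because $\sup_{i}\Vert Df_{i}\Vert_{C^{2}}<\infty$ and $\varrho>0$, is the delicate part of the argument.
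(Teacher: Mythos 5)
Your proposal is correct and follows exactly the route the paper intends: the paper states the lemma immediately after announcing ``Using the Lemmas \ref{limitadocones12} and \ref{limitadocontract12} we can prove that,'' and its proof consists only of a citation to Lemmas 4.4 and 4.5 of \cite{Jeo1}, which carry out the same cone-nesting, graph-transform, and rate-compounding argument you describe. Your closing remark correctly identifies the only genuinely delicate point, namely keeping the chart-distortion losses uniform over $i\in\mathbb{Z}$ so that they can be absorbed into a single $\tilde{c}$ rather than compounding exponentially.
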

\begin{figure}[ht] 
\begin{center}

\begin{tikzpicture}
\draw[black!7,fill=black!10, ultra thin] (-7.9,-0.4)rectangle (-4.1,3.4);
\draw[black, fill=black!30, thin] (-6,1.5) -- (-7,3.4) -- (-5,3.4) -- cycle;
\draw[black, fill=black!50, thin] (-6,1.5) -- (-6.8,3.4) -- (-5.4,3.4) -- cycle;
\draw[black, fill=black!69, thin] (-6,1.5) -- (-6.6,3.4) -- (-5.8,3.4) -- cycle;
\draw[black, fill=black!85, thin] (-6,1.5) -- (-6.4,3.4) -- (-5.9,3.4) -- cycle;
\draw[black, fill=black!30, thin] (-6,1.5) -- (-7,-0.4) -- (-5,-0.4) -- cycle;
\draw[black, fill=black!50, thin] (-6,1.5) -- (-5.2,-0.4) -- (-6.6,-0.4) -- cycle;
\draw[black, fill=black!69, thin] (-6,1.5) -- (-6.3,-0.4) -- (-5.4,-0.4) -- cycle;
\draw[black, fill=black!85, thin] (-6,1.5) -- (-6.1,-0.4) -- (-5.6,-0.4) -- cycle;

\draw[black, fill=black!30, thin] (-6,1.5) -- (-4.1,2.5) -- (-4.1,0.5) -- cycle;
\draw[black, fill=black!50, thin] (-6,1.5) -- (-4.1,2.2) -- (-4.1,0.8) -- cycle;
\draw[black, fill=black!69, thin] (-6,1.5) -- (-4.1,2) -- (-4.1,1) -- cycle;
\draw[black, fill=black!85, thin] (-6,1.5) -- (-4.1,1.9) -- (-4.1,1.3) -- cycle;

\draw[black, fill=black!30, thin] (-6,1.5) -- (-7.9,2.5) -- (-7.9,0.5) -- cycle;
\draw[black, fill=black!50, thin] (-6,1.5) -- (-7.9,2.2) -- (-7.9,0.8) -- cycle;
\draw[black, fill=black!69, thin] (-6,1.5) -- (-7.9,2) -- (-7.9,1) -- cycle;
\draw[black, fill=black!85, thin] (-6,1.5) -- (-7.9,1.7) -- (-7.9,1.2) -- cycle;

 \draw[<->] (-6,-0.5) -- (-6,3.5);
 \draw[<->] (-8,1.5) -- (-4,1.5); 
\draw (-6.2,4) node[below] {\quad{\small $E_{p}^{u}$}}; 
\draw (-3.9,1.7) node[below] {\quad{\small $E_{p}^{s}$}};
\draw (-4.6,3.3) node[below] {\small $T_{p}M$};
\draw (-5.4,-0.5) node[below] {\small $F_{p,3}^{u}$};
 \draw[->] (-5.5,-0.6) -- (-5.7,-0.45);
\draw (-6,-0.5) node[below] {\small $F_{p,2}^{u}$};
\draw[->] (-6.2,-0.6) -- (-6.2,-0.45);
\draw (-6.6,-0.5) node[below] {\small $F_{p,1}^{u}$};
\draw[->] (-6.5,-0.6) -- (-6.5,-0.45);
\draw (-8.4,1.6) node[below] {\small $F_{p,3}^{s}$}; 
\draw[->] (-8.1,2.1) -- (-7.95,2.1);
\draw (-8.4,2) node[below] {\small $F_{p,2}^{s}$};
\draw[->] (-8.1,1.8) -- (-7.95,1.8);
\draw (-8.4,2.4) node[below] {\small $F_{p,1}^{s}$};
\draw[->] (-8.1,1.3) -- (-7.95,1.3);

\draw (-5,4) node[below] {\small $K_{\alpha,\mathcal{F},p}^{u}$};
\draw (-3.8,2.3) node[below] {\quad{\small $K_{\alpha\mathcal{F},p}^{s}$}};
 \end{tikzpicture} 

\end{center}
\caption{$F_{p,n}^{r}=\bigcap_{k=1}^{n}D_{\mathcal{G}^{\pm k}(p)} \mathcal{G}^{\pm k}   (\overline{K_{\alpha,\mathcal{F},\mathcal{G}^{\pm k}(p)}^{s}})$, for $r=s,u$ and $n=1,2,3$.} \label{intercone}
\end{figure}
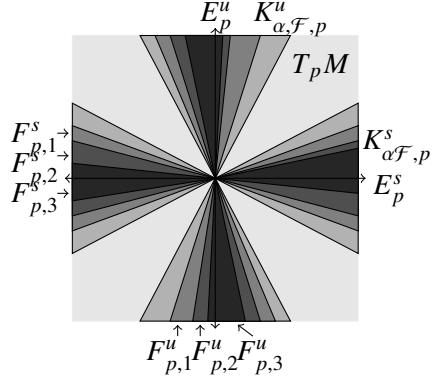
\begin{proof} See \cite{Jeo1}, Lemmas 4.4 and 4.5.
\end{proof}

Consequently,  we have:

 \begin{teo}\label{teoprincipal1} For all $\mathcal{F}\in \mathcal{A}^{2}_{b}(\mathcal{M})$, there exists $\xi>0$ such that $B^{2}(\mathcal{F}, \xi)\subseteq \mathcal{A}^{2}_{b}(\mathcal{M})$. That is,  $\mathcal{A}^{2}_{b}(\mathcal{M})$ is open in ($\mathcal{D}^{2} (\mathcal{M}),\tau_{unif}$). 
\end{teo}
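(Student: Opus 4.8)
The plan is to treat Theorem~\ref{teoprincipal1} as the packaging of the cone construction already carried out in Lemma~\ref{invariantesespacios}, supplemented by a direct check that the uniform $C^{2}$-bound is itself an open condition. First I would fix $\xi\le\tfrac{1}{2C}\min\{\tilde r,\sigma_{A}\}$ exactly as in the hypothesis of Lemma~\ref{invariantesespacios}, and take an arbitrary $\mathcal{G}=(g_{i})_{i\in\mathbb{Z}}\in B^{2}(\mathcal{F},\xi)$. By that lemma the cone intersections $F^{s}_{p}$ and $F^{u}_{p}$ furnish a $D\mathcal{G}$-invariant continuous splitting $T_{p}\mathcal{M}=F^{s}_{p}\oplus F^{u}_{p}$ for which $\mathcal{G}$ is Anosov with uniform constants $\eta\in(0,1)$ and $\tilde c\ge 1$, and which satisfies the property of angles. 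Thus the two nontrivial dynamical requirements in the definition of $\mathcal{A}^{2}_{b}(\mathcal{M})$---that $\mathcal{G}$ be Anosov and s.p.a.---are already guaranteed on the whole ball.

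The only condition left to verify is $\sup_{i\in\mathbb{Z}}\Vert Dg_{i}\Vert_{C^{2}}<\infty$. Here I would argue directly from the metrics \eqref{edssc2} and \eqref{edssc3}: $d^{2}(f_{i},g_{i})$ dominates the supremal differences between the first and second chart--derivatives of $g_{i}$ (respectively $g_{i}^{-1}$) and those of $f_{i}$ (respectively $f_{i}^{-1}$). Since the injectivity radii are uniformly bounded below by $\varrho>0$, the exponential charts are uniformly bi-Lipschitz, so closeness of the chart--derivatives transfers to closeness of the intrinsic norms $\Vert Dg_{i}\Vert,\Vert Dg_{i}^{-1}\Vert,\Vert D^{2}g_{i}\Vert,\Vert D^{2}g_{i}^{-1}\Vert$. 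The triangle inequality then gives, for each $i$,
\[
\Vert Dg_{i}\Vert_{C^{2}}\le \Vert Df_{i}\Vert_{C^{2}}+d^{2}(f_{i},g_{i})\le S_{\mathcal{F}}+\xi,
\]
and taking the supremum over $i$ yields $\sup_{i\in\mathbb{Z}}\Vert Dg_{i}\Vert_{C^{2}}\le S_{\mathcal{F}}+\xi<\infty$, because $\mathcal{F}\in\mathcal{A}^{2}_{b}(\mathcal{M})$ forces $S_{\mathcal{F}}<\infty$. Hence the bounded-derivative condition is open and no further shrinking of $\xi$ is required.

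Combining the two paragraphs shows $\mathcal{G}\in\mathcal{A}^{2}_{b}(\mathcal{M})$, so $B^{2}(\mathcal{F},\xi)\subseteq\mathcal{A}^{2}_{b}(\mathcal{M})$ and $\mathcal{A}^{2}_{b}(\mathcal{M})$ is open in $(\mathcal{D}^{2}(\mathcal{M}),\tau_{unif})$. I expect essentially no obstacle at the level of this theorem: all of the analytic difficulty---building the invariant cone fields, extracting from their nested intersections genuinely continuous invariant subbundles, and in particular obtaining hyperbolicity constants $\eta,\tilde c$ that do not depend on the chosen $\mathcal{G}\in B^{2}(\mathcal{F},\xi)$ nor on $i$---is already absorbed into Lemma~\ref{invariantesespacios} and the cone estimates of Lemmas~\ref{limitadocones12} and \ref{limitadocontract12}. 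The one genuinely new point is the elementary observation that the uniform $C^{2}$-bound persists under small uniform perturbations, and this is precisely where the uniform topology (rather than the strong topology) is essential: it is what allows a single $\xi$, bounded away from zero, to work simultaneously for all indices $i$.
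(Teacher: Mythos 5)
Your proposal is correct and follows essentially the same route as the paper: the theorem is deduced directly from Lemma~\ref{invariantesespacios} (itself resting on the cone Lemmas~\ref{limitadocones12} and \ref{limitadocontract12}), which already gives that every $\mathcal{G}\in B^{2}(\mathcal{F},\xi)$ is Anosov with uniform constants and satisfies the property of angles. Your explicit check that $\sup_{i}\Vert Dg_{i}\Vert_{C^{2}}\le S_{\mathcal{F}}+\xi<\infty$ is a detail the paper leaves implicit, and it is a correct and welcome addition rather than a departure from the argument.
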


\section{Local Stable and Unstable Manifolds for Anosov Families} 

In    \cite{Jeo2},  we  give conditions for obtain stable and unstable  manifolds at each point of each component $M_{i}$. In that case, the size of each submanifold could decay as $i\rightarrow \pm \infty$.   In  Theorems \ref{variedadeinstave} and \ref{variedadeestavel} we will see that each $\mathcal{F}\in\mathcal{A}^{2}_{b}(\mathcal{M})$  admits stable and unstable  manifold   with  the same size at each point.

   \medskip
   
    Given two points $p,q\in \mathcal{M}$, set
\begin{align*} 
\Theta_{p,q}&= \underset{n\rightarrow \infty}\limsup  \frac{1}{n}\log d(\mathcal{F}_{i}^{n}(q),\mathcal{F}_{i}^{n}(p))\quad  \text{and}\quad 
 \Delta_{p,q}   =\underset{n\rightarrow \infty}\limsup\frac{1}{n}\log d(\mathcal{F}_{i}^{-n}(q),\mathcal{F}_{i}^{-n}(p)).
\end{align*}

\begin{defin}\label{conjuntosestaviesfam}  Let  $\varepsilon>0$. Fix  $p\in \mathcal{M}$.    
\begin{enumerate}[\upshape (i)]
\item  \(\mathcal{W}^{s}(p,\varepsilon)=\{q\in B(p,\varepsilon):  \Theta_{p,q}<0\text{  and }\mathcal{F}_{i}^{\, n}(q)\in B(\mathcal{F}_{i}^{\, n}(p),\varepsilon)\text{ for }n\geq1\}:=\) the \textit{local stable set at} $p$; 
\item  \(\mathcal{W}^{u}(p,\varepsilon)=\{q\in B(p,\varepsilon):   
  \Delta_{p,q}<0\text{  and }\mathcal{F}_{i}^{-n}(q)\in B(\mathcal{F}_{i}^{-n}(p),\varepsilon)\text{ for }n\geq1\}:=\) the \textit{local unstable set at} $p$.
\end{enumerate}
\end{defin}

Take $\mathcal{F}\in\mathcal{A}^{2}_{b}(\mathcal{M})$ and $\tilde{\lambda}$  as in Remark  \ref{remarnormas}.
Fix $\alpha\leq(\tilde{\lambda}^{-1}-1)/2$ and \(\gamma\in (\tilde{\lambda}^{2},1)\). It follows from Proposition \ref{primeirolem} that    there exists $\delta>0$ such that  
 \begin{equation*}\label{ultimaeqw} \sigma(\delta,\mathcal{F})<\min\left\{  \frac{(\tilde{\lambda}^{-1}-\tilde{\lambda})\alpha}{(1+\alpha)^{2}} , \frac{(\gamma\tilde{\lambda}^{-1}-\tilde{\lambda})}{(1+\alpha)(1+\gamma)} , \frac{1-\tilde{\lambda}}{1+\tilde{\lambda}}\right\}.
  \end{equation*}

Thus, $\mathcal{F}$ satisfies the assumption from Proposition 3.3 in  \cite{Jeo2}.  Therefore, there exists   $\zeta\in (0,1)$ such that: 

\begin{teo}\label{variedadeinstave} For   each $p\in \mathcal{M}$,     $\mathcal{W}^{u}(p,\delta)$  is a differentiable submanifold of $\mathcal{M}$ and there exists $K^{u}>0$ such that:
 \begin{enumerate}[\upshape (i)]
\item   $T_{p}\mathcal{W} ^{u}(p,\delta)=E_{p} ^{u}$, 
\item $\mathcal{F}^{-1}(\mathcal{W}^{u}(p,\delta))\subseteq \mathcal{W}^{u}( \mathcal{F}  ^{\, -1}(p),\delta)$,  
\item if $q\in \mathcal{W} ^{u}(p,\delta)$ and $n\geq1$ we have  
\( d(\mathcal{F}^{-n}(q),\mathcal{F}^{-n}(p))\leq  K^{u}\zeta^{n}d(q,p).\)  
\item Let $(p_{m})_{m\in \mathbb{N}}$   be a  sequence in $ M_{i}$ converging to $p\in M_{i}$ as $m\rightarrow \infty$. If  $q_{m}\in   \mathcal{W} ^{u}(p_{m},\delta)$ converges to $q\in B (p,\delta)$ as $m\rightarrow \infty$,  then $q\in \mathcal{W}^{u}(p,\delta)$.  
\end{enumerate}
\end{teo}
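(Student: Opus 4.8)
The plan is to realize $\mathcal{W}^{u}(p,\delta)$ as the graph of a $C^{1}$ map over the unstable subspace $E_{p}^{u}$, produced by the Hadamard graph-transform scheme carried out in the uniform exponential charts introduced before Proposition \ref{primeirolem}, and then to read the four conclusions off the fixed point. The decisive feature of $\mathcal{A}^{2}_{b}(\mathcal{M})$ is that every constant entering the construction can be taken independent of the base point, which is exactly what forces the radius $\delta$ to be uniform; concretely, the reduction is to check the hypotheses of Proposition 3.3 in \cite{Jeo2} and then invoke it.

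First I would fix $p\in M_{i}$ and pass to the chart $\exp_{p}$, representing the dynamics near $p$ by the local maps $\tilde{f}_{p}$ and $\tilde{f}_{p}^{-1}$. The smallness bound on $\sigma(\delta,\mathcal{F})$ displayed above, which is available because $\sup_{i}\Vert Df_{i}\Vert_{C^{2}}<\infty$ controls the nonlinear remainder uniformly through Proposition \ref{primeirolem}, guarantees that in these charts $\tilde{f}_{p}$ is a uniformly small $C^{1}$-perturbation of its linear part $D_{0}\tilde{f}_{p}=Df_{i}$, and that the stable and unstable cone fields $K^{s}_{\alpha,\mathcal{F},z}$, $K^{u}_{\alpha,\mathcal{F},z}$ are invariant with uniform expansion in the unstable cone (this is the mechanism isolated in Lemmas \ref{limitadocones12} and \ref{limitadocontract12}, specialized to $\mathcal{G}=\mathcal{F}$ and the present aperture $\alpha$). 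These are precisely the assumptions of Proposition 3.3 in \cite{Jeo2}.

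Next I would set up the graph transform on families of \emph{admissible $u$-graphs}: for each $p$, the graph over $\{v\in E_{p}^{u}:\Vert v\Vert_{\star}<\delta\}$ of a map $\varphi_{p}:E_{p}^{u}\to E_{p}^{s}$ with $\varphi_{p}(0)=0$ and Lipschitz constant $\leq\alpha$, so that the graph lies in the unstable cone $K^{u}_{\alpha,\mathcal{F}}$. Forward invariance $D\mathcal{F}(K^{u})\subseteq K^{u}$ makes $\mathcal{F}$ carry the admissible graph at $\mathcal{F}^{-1}(p)$ to an admissible graph at $p$, and the uniform unstable expansion contracts slopes, so the induced transform on families of admissible graphs is a uniform contraction; its unique fixed family $\{\varphi_{p}\}$ defines $\mathcal{W}^{u}(p,\delta)=\{\exp_{p}(v,\varphi_{p}(v))\}$, with radius $\delta$ the same at every point since the contraction constant and domain are uniform in $i$. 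Differentiability and (i) then follow from $\varphi_{p}(0)=0$ together with $D_{0}\varphi_{p}=0$, the latter forced by exact $D\mathcal{F}$-invariance of the splitting $E^{s}\oplus E^{u}$; the invariance (ii) is the defining relation $\mathcal{F}^{-1}(\mathcal{W}^{u}(p,\delta))\subseteq\mathcal{W}^{u}(\mathcal{F}^{-1}(p),\delta)$ built into the fixed-point characterization; the exponential estimate (iii) comes from the uniform backward contraction rate $\zeta$ along the graph, with $K^{u}$ absorbing the chart-distortion constant $C$ of \eqref{metriceqe}; and (iv) follows either from continuous dependence of the fixed point $\varphi_{p}$ on the base point $p$, or equivalently from the closedness of the characterization in Definition \ref{conjuntosestaviesfam} under the stated limits.

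The main obstacle, and the only place where the full strength of $\mathcal{A}^{2}_{b}(\mathcal{M})$ is used, is securing this uniformity: in \cite{Jeo2} the same construction yields manifolds whose size may decay as $i\to\pm\infty$, and the new input is that bounded $C^{2}$ norms (uniform control of the remainder via Proposition \ref{primeirolem}) together with the property of angles (uniform transversality of $E^{s}\oplus E^{u}$, so the aperture $\alpha$ and the rate $\zeta$ can be fixed once and for all) let $\alpha$, $\zeta$, $\sigma(\delta,\mathcal{F})$, and hence $\delta$ and $K^{u}$ be chosen independently of $p$ and $i$. Once this is in place the four conclusions are the standard output of the Hadamard--Perron fixed-point argument, so the remaining work is the routine verification that $\mathcal{F}$ meets the hypotheses of Proposition 3.3 in \cite{Jeo2}.
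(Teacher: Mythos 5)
Your proposal is correct and takes essentially the same route as the paper: the paper's proof consists precisely of using Proposition \ref{primeirolem} to choose a uniform $\delta$ with $\sigma(\delta,\mathcal{F})$ below the required threshold, observing that $\mathcal{F}$ then satisfies the hypotheses of Proposition 3.3 in \cite{Jeo2}, and citing Theorems 3.7 and 4.5 of \cite{Jeo2} for the four conclusions. Your sketch of the graph-transform construction is simply an unpacking of that citation, and you correctly identify the uniformity of the constants (via bounded $C^{2}$ norms and the property of angles) as the new input.
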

\begin{proof}  See \cite{Jeo2}, Theorems 3.7 and 4.5. 
\end{proof}

 \begin{teo}\label{variedadeestavel}   For   each $p\in \mathcal{M}$,     $\mathcal{W}^{s}(p,\delta)$  is a differentiable submanifold of $\mathcal{M}$ and there exists $K^{s}>0$ such that:
\begin{enumerate}[\upshape (i)]
\item   $T_{p}\mathcal{W}^{s}(p,\delta)=E_{p} ^{s}$,   
\item $\mathcal{F}  (\mathcal{W} ^{s}(p,\delta))\subseteq \mathcal{W}^{s}(\mathcal{F}(p),\delta)$,
\item if $q\in \mathcal{W} ^{s}(p,\delta)$ and $n\geq1$ we have  
\(d(\mathcal{F}^{\, n}(q),\mathcal{F}^{\, n}(p))\leq  K^{s}\zeta^{n}d(q,p).
\)
\item Let $(p_{m})_{m\in \mathbb{N}}$   be a  sequence in $ M_{i}$ converging to $p\in M_{i}$ as $m\rightarrow \infty$. If  $q_{m}\in   \mathcal{W} ^{s}(p_{m},\delta)$ converges to $q\in B (p,\delta)$ as $m\rightarrow \infty$,  as $q\in \mathcal{W}^{s}(p,\delta)$.       
\end{enumerate}
\end{teo}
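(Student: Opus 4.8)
The plan is to deduce Theorem \ref{variedadeestavel} from the unstable manifold theorem (Theorem \ref{variedadeinstave}) by passing to the \emph{time-reversed family}, for which the stable objects of $\mathcal{F}$ become the unstable objects. Concretely, I would set $\widehat{M}_i = M_{-i}$ (with the same Riemannian metric, so that $\inf_i \varrho_i>0$ is preserved) and define $\widehat{\mathcal{F}} = (\widehat{f}_i)_{i\in\mathbb{Z}}$ by $\widehat{f}_i = f_{-i-1}^{-1}: M_{-i} \to M_{-i-1}$, so that one forward step of $\widehat{\mathcal{F}}$ is one backward step of $\mathcal{F}$. Writing $\hat p$ for a point $p\in M_i$ viewed in $\widehat{M}_{-i}$, a direct bookkeeping with the composition law of Definition \ref{leidecomposicao} gives $\widehat{\mathcal{F}}_{-i}^{\,n}(\hat p) = \mathcal{F}_i^{-n}(p)$ and $\widehat{\mathcal{F}}_{-i}^{\,-n}(\hat p) = \mathcal{F}_i^{\,n}(p)$ for all $n\geq 1$.

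First I would check that $\widehat{\mathcal{F}}\in\mathcal{A}^2_b(\widehat{\mathcal{M}})$ with the \emph{same} constants. Since reversing time interchanges contraction and expansion, $\widehat{\mathcal{F}}$ is Anosov for the swapped splitting $E^{s}_{\hat p,\widehat{\mathcal{F}}} = E^{u}_{p}$ and $E^{u}_{\hat p,\widehat{\mathcal{F}}} = E^{s}_{p}$ with the same $\lambda,c$; indeed $D\widehat{\mathcal{F}}_{-i}^{\,n}=D\mathcal{F}_i^{-n}$ and $D\widehat{\mathcal{F}}_{-i}^{\,-n}=D\mathcal{F}_i^{\,n}$, so the two contraction estimates of Definition \ref{anosovfamily} just exchange roles. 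The angle between the two subbundles is unchanged, so $\widehat{\mathcal{F}}$ s.p.a.; and because $\Vert\cdot\Vert_{C^2}$ is defined symmetrically in $\phi$ and $\phi^{-1}$ one has $\Vert \widehat{f}_i\Vert_{C^2}=\Vert f_{-i-1}^{-1}\Vert_{C^2}=\Vert f_{-i-1}\Vert_{C^2}$, whence $\sup_i\Vert \widehat{f}_i\Vert_{C^2}=\sup_i\Vert f_i\Vert_{C^2}<\infty$. Moreover all the quantitative constants entering the construction are invariant under the swap: $\tilde\lambda$ is unchanged, the admissible ranges $\alpha\leq(\tilde\lambda^{-1}-1)/2$ and $\gamma\in(\tilde\lambda^2,1)$ are symmetric, and $\sigma(\delta,\widehat{\mathcal{F}})=\sigma(\delta,\mathcal{F})$ because the quantity in Proposition \ref{primeirolem} already takes a maximum over the forward and inverse charts $\tilde{f}_p,\tilde{f}_p^{-1}$. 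Hence the \emph{same} $\delta$ and the \emph{same} $\zeta\in(0,1)$ supplied by Theorem \ref{variedadeinstave} apply to $\widehat{\mathcal{F}}$.

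Next I would identify the sets. The relation $\widehat{\mathcal{F}}^{-n}(\hat q)=\mathcal{F}^{n}(q)$ gives $d(\widehat{\mathcal{F}}^{-n}(\hat q),\widehat{\mathcal{F}}^{-n}(\hat p))=d(\mathcal{F}^{n}(q),\mathcal{F}^{n}(p))$ for every $n\geq1$, so $\Delta^{\widehat{\mathcal{F}}}_{\hat p,\hat q}=\Theta^{\mathcal{F}}_{p,q}$ and the local-orbit containment conditions coincide; this yields the exact equality $\mathcal{W}^{u}_{\widehat{\mathcal{F}}}(\hat p,\delta)=\mathcal{W}^{s}_{\mathcal{F}}(p,\delta)$ as subsets of the common manifold. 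Applying Theorem \ref{variedadeinstave} to $\widehat{\mathcal{F}}$ and translating each conclusion then produces Theorem \ref{variedadeestavel}: the submanifold structure is immediate; item (i) follows from $T_{\hat p}\mathcal{W}^{u}_{\widehat{\mathcal{F}}}(\hat p,\delta)=E^{u}_{\hat p,\widehat{\mathcal{F}}}=E^{s}_{p}$; item (ii) is the image of the backward invariance $\widehat{\mathcal{F}}^{-1}(\mathcal{W}^u_{\widehat{\mathcal{F}}}(\hat p,\delta))\subseteq\mathcal{W}^u_{\widehat{\mathcal{F}}}(\widehat{\mathcal{F}}^{-1}(\hat p),\delta)$ under the correspondence $\widehat{\mathcal{F}}^{-1}\leftrightarrow\mathcal{F}$; item (iii) is the contraction estimate with $K^s=K^u$; and item (iv) transfers verbatim.

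The main obstacle I expect is bookkeeping rather than analysis: keeping the index reversal $i\mapsto -i$ and the inversion $f_i\mapsto f_{-i-1}^{-1}$ consistent through the composition law, and verifying carefully that every quantitative constant ($\delta$, $\zeta$, $\alpha$, $\gamma$ and $\sigma$) is genuinely symmetric under the swap, so that no new smallness requirement is introduced. Once the time-reversed family is seen to lie in $\mathcal{A}^2_b$ with the same constants, the stable manifold theorem becomes a formal corollary of the unstable one, with no independent graph-transform or contraction-mapping argument required; alternatively, one could simply repeat the construction of \cite{Jeo2} with the backward iterates, but the duality argument is considerably shorter.
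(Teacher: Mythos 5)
Your argument is correct, but it is not the route the paper takes: here the proof of Theorem \ref{variedadeestavel} is simply a citation of Theorems 3.8 and 4.6 of \cite{Jeo2}, where the stable case is established directly, in parallel with the unstable case, rather than deduced from it. Your time-reversal reduction is a legitimate alternative, and the checks you single out are exactly the ones that matter: the class $\mathcal{A}^{2}_{b}$ is closed under $f_{i}\mapsto f_{-i-1}^{-1}$ because $\Vert\cdot\Vert_{C^{2}}$ and the s.p.a.\ condition are symmetric in $\phi$ and $\phi^{-1}$; the adapted norm of Remark \ref{remarnormas} makes the reversed family strictly Anosov with the same $\tilde{\lambda}$ since the two estimates of Definition \ref{anosovfamily} merely exchange roles; and $\sigma(\delta,\cdot)$ in Proposition \ref{primeirolem} already maximizes over the forward and inverse charts, so the same $\delta$ is admissible (for the reversed family one should take the chart centers to be $f_{i}(X_{i})\subseteq M_{i+1}$, which makes the two suprema range over literally the same quantities; and if one is not fully confident that the resulting $\zeta$ coincides, taking the larger of the two values of $\zeta$ costs nothing). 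The identifications $\widehat{\mathcal{F}}_{-i}^{\,n}=\mathcal{F}_{i}^{-n}$, $\widehat{\mathcal{F}}^{-1}=\mathcal{F}$ and $\mathcal{W}^{u}_{\widehat{\mathcal{F}}}(\hat p,\delta)=\mathcal{W}^{s}_{\mathcal{F}}(p,\delta)$ then translate each item of Theorem \ref{variedadeinstave} into the corresponding item here, including the invariance statement (ii). What your approach buys is economy --- no second graph-transform or cone-intersection argument --- at the price of the bookkeeping you describe; what the paper's (i.e.\ the companion paper's) direct approach buys is that the stable and unstable constructions are self-contained and visibly share the same constants without any symmetry audit.
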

\begin{proof} See \cite{Jeo2}, Theorems 3.8 and 4.6. 
\end{proof}
 
$K^{u}$ and $K^{s}$ from Theorems \ref{variedadeinstave} and   \ref{variedadeestavel}, respectively, depend on the constant $ c$ of $\mathcal{F}$, on the constant $C$ in \eqref{metriceqe} and on the minimum angle between the stable and unstable subspaces of the splitting $T\mathcal{M}=E^{s}\oplus E^{u},$ which is positive because we are supposing that $\mathcal{F}$  s.p.a.

\section{Structural stability of  $\mathcal{A}^{2}_{b}(\mathcal{M})$}

 In this section   we will show that $\mathcal{A}^{2}_{b}(\mathcal{M})$ is uniformly structurally stable in $\mathcal{D}^{2}(\mathcal{M})$: for each $\mathcal{F}\in \mathcal{A}^{2}_{b}(\mathcal{M})$ there exists a uniform basic neighborhood $B^{2} (\mathcal{F}, \delta)$ of $\mathcal{F}$ such that,  
  each $\mathcal{G}\in B^{2} (\mathcal{F}, \delta)$ is   uniformly conjugate to $\mathcal{F}.$   Since $\Vert \cdot \Vert$ and $\Vert \cdot \Vert_{\star}$ (see Remark \ref{remarnormas}), then any
two-sided sequence $(h_{i} : M_{i}\rightarrow M_{i})_{i\in \mathbb{Z}}$  is equicontinuous with respect to the metric $\Vert \cdot\Vert$  if and only if
is equicontinuous with respect  $\Vert \cdot\Vert_{\star}$. Therefore, in order to show our result it is sufficient to prove it
considering the metric  $\Vert \cdot\Vert_{\star}$ on $\mathcal{M}$. Hence, we will consider the metric  $\Vert \cdot\Vert_{\star}$ given in \ref{metriceq}. To simplify
the notation, we will omit the symbol ``$  \star$''  for this metric. Therefore, we will fix $\mathcal{F}\in  \mathcal{A}^{2}_{b}(\mathcal{M})$ and we
can suppose that is strictly Anosov with respect to $\Vert \cdot\Vert$, with constant $\lambda\in (0, 1)$.
  
  \medskip
  
In order to prove the structural stability of Anosov families in $\mathcal{A}^{2}_{b}(\mathcal{M})$  we have adapted the Shub's ideas in
\cite{Shub} to prove the structural stability of Anosov diffeomorphisms on compact Riemannian manifolds.
We will divide the proof of this fact into a series of lemmas and propositions. Throughout this section,
we will consider $\tilde{r} > 0$, $\xi > 0$, $\varrho>0$ and $\eta \in [\lambda, 1)$ as in Section 5.

\medskip  
  
Let $0 < \varepsilon \leq \varrho/2$. We can identify $\mathcal{D}(\varepsilon)$   with $\Gamma_{\varepsilon}(\mathcal{M})$ (see Defifition \ref{subestp}) by the homeomorphism
\begin{align*} \Phi: \mathcal{D}(\varepsilon)&\rightarrow  \Gamma_{\varepsilon}(\mathcal{M})\\
(h_{i})_{i\in\mathbb{Z}}&\mapsto  (\Phi_{i}(h_{i}))_{i\in\mathbb{Z}} ,\end{align*}
where  $\Phi_{i}(h_{i})(p)=\text{exp}_{p}^{-1}(h_{i}(p))$, for $p\in M_{i}.$ Note that  $\Phi_{i}(I_{i})$ is the zero section in $\Gamma(M_{i} ).$

 \begin{lem}\label{definiciong1} Fix $\kappa>0$.    There exist $\xi^{\prime}\in (0,\xi]$ and $r^{\prime}\in(0,\tilde{r}/3]$ such that, if $ (g_{i})_{i\in\mathbb{Z}}\in   B^{2}(\mathcal{F},\tilde{\xi})$,  the map 
 \begin{equation*} \mathcal{G}:  \mathcal{D}(r^{\prime}) \rightarrow  \mathcal{D}(\kappa), \quad  
(h_{i})_{i\in\mathbb{Z}} \mapsto (g_{i-1} \circ h_{i} \circ f_{i-1}^{-1})_{i\in\mathbb{Z}}  
\end{equation*}
 is well-defined.
 \end{lem}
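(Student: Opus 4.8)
The plan is to check well-definedness directly: given $(h_i)_{i\in\mathbb{Z}}\in\mathcal D(r')$ and a family $(g_i)_{i\in\mathbb{Z}}\in B^2(\mathcal F,\xi')$, I must show that the sequence whose $i$-th component is $k_i:=g_{i-1}\circ h_{i-1}\circ f_{i-1}^{-1}$ lies in $\mathcal D(\kappa)$. Typing is immediate: $f_{i-1}^{-1}\colon M_i\to M_{i-1}$, $h_{i-1}\colon M_{i-1}\to M_{i-1}$ and $g_{i-1}\colon M_{i-1}\to M_i$, so each $k_i$ is a continuous self-map of $M_i$; hence only the uniform $C^0$-estimate $d(k_i(p),p)\le\kappa$, for all $i$ and all $p\in M_i$, remains to be established.

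For the estimate I would fix $i$ and $p\in M_i$ and set $q=f_{i-1}^{-1}(p)$, so that $p=f_{i-1}(q)$. The triangle inequality gives
\[ d(k_i(p),p)\le d\bigl(g_{i-1}(h_{i-1}(q)),g_{i-1}(q)\bigr)+d\bigl(g_{i-1}(q),f_{i-1}(q)\bigr). \]
The second summand is at most $\sup_{x\in M_{i-1}}d(g_{i-1}(x),f_{i-1}(x))\le d^0(f_{i-1},g_{i-1})\le d^2(f_{i-1},g_{i-1})<\xi'$. For the first summand I would combine $d(h_{i-1}(q),q)\le r'$, which holds since $(h_i)\in\mathcal D(r')$, with a Lipschitz bound for $g_{i-1}$; because $r'\le\tilde r/3$ and $\xi'$ are small, all the points involved lie in balls of radius $<\varrho$, where $d$ coincides with the underlying Riemannian distance, so the derivative bounds translate into genuine Lipschitz estimates.

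The Lipschitz bound is where the standing hypotheses enter. Since $(g_i)\in B^2(\mathcal F,\xi')$ with $\xi'\le\xi$, the $C^1$-part of $d^2(f_{i-1},g_{i-1})<\xi'$ yields $\Vert Dg_{i-1}\Vert\le\Vert Df_{i-1}\Vert+\xi'\le S_{\mathcal F}+\xi'$ for every $i$, so each $g_{i-1}$ is Lipschitz with constant $\le S_{\mathcal F}+\xi'$, with the \emph{same} constant for all $i$. Hence $d(g_{i-1}(h_{i-1}(q)),g_{i-1}(q))\le (S_{\mathcal F}+\xi')\,r'$ and
\[ d(k_i(p),p)\le (S_{\mathcal F}+\xi')\,r'+\xi'\qquad\text{for all }i\in\mathbb Z,\ p\in M_i. \]
It then suffices to choose, for instance, $\xi'=\min\{\xi,\kappa/2\}$ and $r'=\min\{\tilde r/3,\ \kappa/(2(S_{\mathcal F}+\xi))\}$, for which the right-hand side is $\le\kappa/2+\kappa/2=\kappa$; these also respect the required $\xi'\le\xi$ and $r'\le\tilde r/3$, proving that $\mathcal G$ maps $\mathcal D(r')$ into $\mathcal D(\kappa)$.

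I expect the only delicate point to be the uniformity in $i$ of the Lipschitz constant of $g_{i-1}$. This relies essentially on $\mathcal F\in\mathcal A^2_b(\mathcal M)$, which guarantees $S_{\mathcal F}=\sup_{i\in\mathbb Z}\Vert Df_i\Vert_{C^2}<\infty$, and on working in the uniform topology, so that the single radius $\xi'$ controls $d^2(f_i,g_i)$ for all $i$ simultaneously. In the strong topology, where the admissible perturbation sizes may decay as $i\to\pm\infty$, no such uniform Lipschitz bound is available and $\mathcal G$ would fail to land in a fixed $\mathcal D(\kappa)$; thus the restriction to $\mathcal A^2_b(\mathcal M)$ and to $\tau_{unif}$ is precisely what makes the lemma work.
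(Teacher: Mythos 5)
Your proof is correct and takes essentially the same route as the paper's: the identical triangle-inequality split $d(g_{i-1}h_{i-1}f_{i-1}^{-1}(p),p)\le d(g_{i-1}(h_{i-1}(q)),g_{i-1}(q))+d(g_{i-1}(q),f_{i-1}(q))$ with $q=f_{i-1}^{-1}(p)$, the first term controlled by the uniform derivative bound $\Vert Dg_{i-1}\Vert\le S_{\mathcal F}+\xi'$ together with $d(h_{i-1}(q),q)\le r'$, the second by $d^{1}(f_{i-1},g_{i-1})<\xi'$, and an analogous choice of $\xi'$ and $r'$. You also correctly re-indexed the $i$-th component as $g_{i-1}\circ h_{i-1}\circ f_{i-1}^{-1}$ (consistent with the paper's own definition of $\mathcal G_{i-1}$), silently fixing the index slip in the statement.
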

 \begin{proof} 
It is sufficient to prove  that there exist $\xi^{\prime}>0$ and $r^{\prime}>0$ such that, for every $i\in\mathbb{Z}$, if $g_{i}:M_{i}\rightarrow M_{i+1}$ is a diffeomorphism with $d^{2}(g_{i},f_{i})<\xi^{\prime}$ and  $h\in D(I_{i},r^{\prime})$,  then  $g_{i}  h f_{i} ^{-1}\in  D(I_{i+1},\kappa)$. For each continuous map $h:M_{i}\rightarrow M_{i}$, we have  
\[d(g_{i}  h f_{i} ^{-1}(p), I_{i+1}(p))\leq d(g_{i}  h f_{i} ^{-1}(p), g_{i}   f_{i} ^{-1}(p))+ d(g_{i} f_{i} ^{-1}(p), I_{i+1}(p)).\]
Let $\mathcal{S}:=\sup_{i\in\mathbb{Z}} \Vert Df_{i}\Vert <\infty$. If $r^{\prime}<\kappa/2(\mathcal{S}+1)$ and $\xi^{\prime} <\min\{1,\kappa /2\}.$ For   $i\in\mathbb{Z}$, if  $$g_{i}\in B^{1}(f_{i},\xi^{\prime})=\{g\in \text{Diff}^{1}(M_{i},M_{i+1}): \max\{d^{1}(g,f_{i}),d^{1}(g^{-1},f_{i}^{-1})<\xi^{\prime}\}$$ we have \[d(g_{i}   f_{i} ^{-1}(p), I_{i+1}(p))=d(g_{i}  f_{i} ^{-1}(p), f_{i} f_{i}^{-1}(p)) \leq d^{1}(g_{i},f_{i}) <\xi ^{\prime} <\kappa/2.\]
Furthermore, if  $h\in D(I_{i},r^{\prime})$ and $g_{i}\in B^{1}(f_{i},\xi^{\prime})$, then 
 \[d(g_{i}  h f_{i} ^{-1}(p), g_{i} f_{i}^{-1}(p))\leq \Vert D g_{i}\Vert d(h f_{i}^{-1}(p),f_{i}^{-1}(p))\leq (\xi^{\prime}+\mathcal{S})r^{\prime}< \kappa/2.\]   Therefore, if  $h\in D(I_{i},r^{\prime})$ and $ g_{i} \in   B^{1}(f_{i},\xi^{\prime})$, we have that  $g_{i}\circ h\circ f_{i}^{-1}\in D(I_{i+1},\kappa),$ which proves the lemma.
\end{proof}
 
Fix  \begin{equation}\label{zeta}\zeta \in (0,\min\{1-{\lambda},1-\eta,\varrho/2\}).\end{equation}
   It   follows from Lemma \ref{definiciong1} that there exist   \begin{equation}\label{dd}r^{\prime}\in (0,\tilde{r}/3)\quad\text{ and }\quad\xi^{\prime}\in (0,\min\{\xi,r^{\prime}(1-{\lambda} -\zeta)\}),
 \end{equation}  such that if $ (g_{i})_{i\in\mathbb{Z}}\in B^{2}(\mathcal{F},\xi^{\prime})$ then  
 \begin{align*}\textbf{G}:\Gamma_{r^{\prime}}(\mathcal{M})&\rightarrow \Gamma_{\zeta}(\mathcal{M}) \\
 (\sigma_{i})_{i\in\mathbb{Z}}&\mapsto (\Phi_{i}\mathcal{G}_{i-1}\Phi_{i-1}^{-1}(\sigma_{i-1}))_{i\in\mathbb{Z}}
 \end{align*}
 is well-defined, where $\mathcal{G}_{i-1}(h)=g_{i-1}\circ h \circ f_{i-1}^{-1},$ for $h\in D(I_{i-1},r^{\prime}).$ Consequently, 
 \[\Phi_{i} \mathcal{G}_{i-1} \Phi^{-1}_{i-1}(\sigma)(p)= \text{exp}_{p}^{-1}\circ g_{i-1}\circ \text{exp}_{f_{i-1}^{-1}(p)}\sigma (f_{i-1} ^{-1} (p)), \quad \text{for } p\in M_{i}, \sigma  \in \Gamma_{r^{\prime}}(M_{i-1}).\]  
 Let \textbf{F} be the operator given in Definition \ref{poerator} for $\mathcal{F}$. Since $\mathcal{F}\in\mathcal{A}^{2}_{b}(\mathcal{M})$,  \textbf{F} is a bounded hyperbolic
linear operator and $\mathcal{D}(\textbf{F}) = \Gamma(\mathcal{M})$  (see Theorem \ref{characte}).
 
\begin{lem}\label{lipchit} Take any $\zeta $ as in \eqref{zeta}. There exist   $\xi^{\prime}  \in (0,\xi] $ and $r^{\prime}\in (0,\tilde{r}/3]$ such that, for each $i\in\mathbb{Z}$, if $\mathcal{G}  \in   B^{1}(\mathcal{F},\xi^{\prime})$,  then   \[\text{Lip}([\textbf{F}  -\textbf{G}  ]  |_{\Gamma_{r^{\prime}} (\textbf{M})})<\zeta ,\]  where $\text{Lip}$ denotes a  Lipschitz constant. 
\end{lem}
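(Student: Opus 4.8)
The plan is to reduce the Lipschitz estimate to a fibrewise computation in the exponential charts, where the difference $\textbf{F}-\textbf{G}$ becomes exactly the object that Proposition \ref{primeirolem} controls. First I would apply Proposition \ref{primeirolem} with $\tau=\zeta$ to obtain a radius and a threshold, and then set $\xi'$ to be the minimum of $\xi$ and that threshold, and $r'$ to be the minimum of $\tilde r/3$ and that radius. The point I would stress is that the inequality actually proved inside Proposition \ref{primeirolem}, namely $\Vert D_{z}(D_{0}(\tilde f_{p})-\tilde g_{p})\Vert<\zeta$, is established for \emph{every} base point $p$ and every $z\in B(0_{p},\tilde r)$ with a constant $K$ that does not depend on $p$; hence it holds uniformly over all of $M_{i}$, not merely on the finite nets $X_{i}$.

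Next I would unwind the two operators in charts. Writing $q=f_{i-1}^{-1}(p)\in M_{i-1}$ and $v=\sigma_{i-1}(q)$, the definition of $\textbf{F}$ gives $\textbf{F}_{i-1}(\sigma)(p)=D_{q}(f_{i-1})(v)$, while the displayed formula for $\Phi_{i}\mathcal{G}_{i-1}\Phi_{i-1}^{-1}$ gives $\textbf{G}_{i-1}(\sigma)(p)=\text{exp}_{p}^{-1}(g_{i-1}(\text{exp}_{q}(v)))=\tilde g_{q}(v)$. Since the exponential maps have derivative the identity at the origin, $D_{0}(\tilde f_{q})=D_{q}(f_{i-1})$, so the two components coincide fibrewise with the maps of Proposition \ref{primeirolem}:
\[
[\textbf{F}_{i-1}-\textbf{G}_{i-1}](\sigma)(p)=\bigl(D_{0}(\tilde f_{q})-\tilde g_{q}\bigr)(v).
\]
This is the step I expect to be the main (conceptual) obstacle: recognising that $\textbf{F}-\textbf{G}$ is, fibre by fibre, precisely the discrepancy between $\tilde g_{q}$ and its linearisation $D_{0}(\tilde f_{q})$ at the origin, whose first derivative is exactly what Proposition \ref{primeirolem} bounds.

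Then, for $\sigma,\sigma'\in\Gamma_{r'}(\mathcal{M})$ and each $p\in M_{i}$, I would apply the mean value inequality along the segment from $v=\sigma_{i-1}(q)$ to $v'=\sigma'_{i-1}(q)$, which stays inside $B(0_{q},r')\subseteq B(0_{q},\tilde r)$ by convexity and the bounds $\Vert\sigma_{i-1}(q)\Vert,\Vert\sigma'_{i-1}(q)\Vert\leq r'$:
\[
\bigl\Vert[\textbf{F}_{i-1}-\textbf{G}_{i-1}](\sigma)(p)-[\textbf{F}_{i-1}-\textbf{G}_{i-1}](\sigma')(p)\bigr\Vert\leq\Bigl(\sup_{z\in B(0_{q},r')}\Vert D_{z}(D_{0}(\tilde f_{q})-\tilde g_{q})\Vert\Bigr)\Vert v-v'\Vert.
\]
By the uniform bound from the first step the supremum is $<\zeta$, while $\Vert v-v'\Vert=\Vert\sigma_{i-1}(q)-\sigma'_{i-1}(q)\Vert\leq\Vert\sigma-\sigma'\Vert_{\infty}$; taking the supremum over $i\in\mathbb{Z}$ and $p\in M_{i}$ gives $\Vert[\textbf{F}-\textbf{G}](\sigma)-[\textbf{F}-\textbf{G}](\sigma')\Vert_{\infty}<\zeta\Vert\sigma-\sigma'\Vert_{\infty}$, which is the claim. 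A final remark I would add is that only the $C^{1}$-size of $\mathcal{G}-\mathcal{F}$ enters the estimate, through $\Vert D_{z}\tilde f_{q}-D_{z}\tilde g_{q}\Vert\leq d^{1}(f_{i-1},g_{i-1})$, whereas the second-order data is furnished by the uniform bound $S_{\mathcal{F}}<\infty$ attached to $\mathcal{F}\in\mathcal{A}^{2}_{b}(\mathcal{M})$; this explains why requiring $\mathcal{G}\in B^{1}(\mathcal{F},\xi')$ is enough.
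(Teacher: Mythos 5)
Your proposal is correct and follows essentially the same route as the paper: reduce $\textbf{F}-\textbf{G}$ fibrewise to the discrepancy $D_{0}(\tilde f_{q})-\tilde g_{q}$ in exponential charts at $q=f_{i-1}^{-1}(p)$, bound its derivative on $B(0_{q},r')$ by $K[1+S_{\mathcal{F}}]S_{\mathcal{F}}\,r'+d^{1}(f_{i-1},g_{i-1})$ uniformly in $p$ (the paper re-derives this inline ``as in the proof of Proposition \ref{primeirolem}'' rather than citing the proposition, precisely because the stated proposition only quantifies over the nets $X_{i}$ --- a point you correctly flag and resolve), and then shrink $\xi'$ and $r'$ so the bound is below $\zeta$. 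Your explicit mean-value step and the closing remark that only $C^{1}$-closeness of $\mathcal{G}$ is needed are both accurate refinements of what the paper leaves implicit.
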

  \begin{proof}
 For $\sigma\in\Gamma_{r^{\prime}}(M_{i-1})$ and $p\in M_{i}$ we have
 \[ [\textbf{F}_{i-1}-\Phi_{i}\mathcal{G}_{i-1}\Phi_{i-1}^{-1}](\sigma)(p)=[D_{f_{i-1}^{-1}(p)}(f_{i-1}) -\text{exp}_{p}^{-1}\circ g_{i-1} \circ \text{exp}_{f_{i-1}^{-1}(p)}](\sigma(f_{i-1}^{-1}(p))).\] 
 Let $q =f_{i-1}^{-1}(p)$. Note that
\[D_{q}( f_{i-1} )  = D_{0_{q}} (\text{exp}_{p}^{-1}\circ f_{i-1}\circ \text{exp}_{q}) \quad
\text{where $0_{q}$ is the zero vector in $T_{q} M_{i-1}$}.\]
For any $v \in T_{q} M_{i-1}$ with $\Vert v\Vert < \xi$, we have
 \begin{align*} D_{v}[D_{q}(f_{i-1}) -\text{exp}_{p}^{-1}\circ g_{i-1} \circ \text{exp}_{q}] & =  D_{v} [D_{0_{q}}(\text{exp}_{p}^{-1}\circ f_{i-1} \circ \text{exp}_{q}) -\text{exp}_{p}^{-1}\circ g_{i-1} \circ \text{exp}_{q}] \\
 &=  D_{0_{q}}(\text{exp}_{p}^{-1}\circ f_{i-1} \circ \text{exp}_{q}) -D_{v}(\text{exp}_{p}^{-1}\circ g_{i-1} \circ \text{exp}_{q}) .\end{align*}

As we saw in the proof of Proposition \ref{primeirolem},
\[\Vert  D_{0_{q}}(\text{exp}_{p}^{-1}\circ f_{i-1} \circ \text{exp}_{q}) -D_{v}(\text{exp}_{p}^{-1}\circ g_{i-1} \circ \text{exp}_{q}) \Vert \leq K [1+ \mathcal{S}_{\mathcal{F}}]\mathcal{S}_{\mathcal{F}}
\Vert v\Vert+ d^{2}(f_{i-1}, g_{i-1}). \]
Hence if  $\sigma=(\sigma_{i})_{i\in\mathbb{Z}}$,   $\tilde{\sigma}=(\tilde{\sigma}_{i})_{i\in\mathbb{Z}}\in \Gamma_{r^{\prime}}(M_{i-1})$ and $p\in  M_{i}$  we have
\begin{align*}\Vert [\textbf{F}_{i-1}-\Phi_{i}\mathcal{G}_{i-1}\Phi_{i-1}^{-1}](\sigma_{i-1})(p)-[\textbf{F}_{i-1}-\Phi_{i}\mathcal{G}_{i-1}\Phi_{i-1}^{-1}](\tilde{\sigma}_{i-1}) (p)\Vert &\leq \mathcal{J}\Vert (\sigma_{i-1}) (p) -(\tilde{\sigma}_{i-1}) (p)\Vert \\
&\leq \mathcal{J}\Vert \sigma_{i-1}    -\tilde{\sigma}_{i-1}   \Vert_{\Gamma_{i}}\\
&\leq \mathcal{J}\Vert \sigma  -\tilde{\sigma} \Vert_{\infty},\end{align*}
where  $\mathcal{J}=K [1+ \mathcal{S}_{\textbf{\textit{f}}}]\mathcal{S}_{\textbf{\textit{f}}}
\Vert v\Vert+ d^{2}(f_{i-1}, g_{i-1}).$
We can choose $ \xi^{\prime}$ and $r^{\prime}$ small enough such that $\mathcal{J}<\zeta.$  
Thus
 \[ \Vert [\textbf{F}_{i-1}-\Phi_{i}\mathcal{G}_{i-1}\Phi_{i-1}^{-1}](\sigma_{i-1}) -[\textbf{F}_{i-1}-\Phi_{i}\mathcal{G}_{i-1}\Phi_{i-1}^{-1}](\tilde{\sigma}_{i-1}) \Vert_{\Gamma_{i}}\leq \zeta\Vert \sigma  -\tilde{\sigma} \Vert_{\infty},\]
and therefore
 \[ \Vert [\textbf{F}- \textbf{G} ](\sigma)-[\textbf{F}- \textbf{G}](\tilde{\sigma})\Vert_{\infty}\leq \zeta\Vert \sigma  -\tilde{\sigma} \Vert_{\infty},\]
which proves the lemma.\end{proof}

From now on we will suppose that   $\xi^{\prime}$ and $r^{\prime}$ satisfy \eqref{dd} and Lemma    \ref{lipchit}. Furthermore, we will fix   $\mathcal{G} =(g_{i})_{i\in\mathbb{Z}}\in   B^{2}(\mathcal{F},\xi^{\prime})$.
 
 \begin{lem}\label{lemapuntofijo}  $\textbf{G}|_{\Gamma_{r^{\prime}}  (\mathcal{M})}$ has a fixed point in $  \Gamma_{r^{\prime}}  (\mathcal{M})$. 
 \end{lem}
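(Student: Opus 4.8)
The plan is to produce the fixed point via the Banach fixed point theorem, converting the hyperbolicity of $\textbf{F}$ into a contraction. Since $\mathcal{F}\in\mathcal{A}^{2}_{b}(\mathcal{M})$, Theorem \ref{characte} guarantees that $\textbf{F}:\Gamma(\mathcal{M})\rightarrow\Gamma(\mathcal{M})$ is a bounded hyperbolic automorphism; in particular $1\notin\sigma(\textbf{F})$, so the operator $I-\textbf{F}$ (with $I$ the identity on $\Gamma(\mathcal{M})$) is invertible. A section $\sigma\in\Gamma_{r^{\prime}}(\mathcal{M})$ satisfies $\textbf{G}(\sigma)=\sigma$ if and only if $(\textbf{G}-\textbf{F})(\sigma)=(I-\textbf{F})(\sigma)$, i.e. if and only if $\sigma$ is a fixed point of
\[\textbf{T}:=(I-\textbf{F})^{-1}\circ(\textbf{G}-\textbf{F}):\Gamma_{r^{\prime}}(\mathcal{M})\rightarrow\Gamma(\mathcal{M}).\]
Thus it suffices to show that $\textbf{T}$ is a contraction of the complete metric space $\Gamma_{r^{\prime}}(\mathcal{M})$ (a closed ball in the Banach space $\Gamma(\mathcal{M})$) into itself.

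Second, I would bound $\Vert(I-\textbf{F})^{-1}\Vert$ using the invariant splitting $\Gamma(\mathcal{M})=\Gamma^{s}(\mathcal{M})\oplus\Gamma^{u}(\mathcal{M})$ from Lemma \ref{lema44}. Since we work with the norm $\Vert\cdot\Vert_{\star}$ in which $\mathcal{F}$ is strictly Anosov with constant $\lambda$, one has $\Vert\textbf{F}|_{\Gamma^{s}(\mathcal{M})}\Vert\leq\lambda$ and $\Vert(\textbf{F}|_{\Gamma^{u}(\mathcal{M})})^{-1}\Vert\leq\lambda$; Neumann series on each summand then give $\Vert(I-\textbf{F})^{-1}\Vert\leq 1/(1-\lambda)$, the splitting being a max-decomposition so that the associated projections have norm one. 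Combining this with Lemma \ref{lipchit}, which yields $\mathrm{Lip}((\textbf{G}-\textbf{F})|_{\Gamma_{r^{\prime}}(\mathcal{M})})<\zeta$, I obtain
\[\mathrm{Lip}(\textbf{T})\leq\frac{1}{1-\lambda}\,\mathrm{Lip}(\textbf{G}-\textbf{F})<\frac{\zeta}{1-\lambda}<1,\]
the last inequality holding because $\zeta<1-\lambda$ by \eqref{zeta}. Hence $\textbf{T}$ is a contraction.

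Third, I would check that $\textbf{T}$ maps $\Gamma_{r^{\prime}}(\mathcal{M})$ into itself. Here $(\textbf{G}-\textbf{F})(0)=\textbf{G}(0)$ is the section associated with the family $(g_{i-1}\circ f_{i-1}^{-1})_{i}$, whose distance to the identity is at most $d^{2}(f_{i-1},g_{i-1})<\xi^{\prime}$, so $\Vert\textbf{T}(0)\Vert_{\infty}\leq\xi^{\prime}/(1-\lambda)$. For $\Vert\sigma\Vert_{\infty}\leq r^{\prime}$ this gives
\[\Vert\textbf{T}(\sigma)\Vert_{\infty}\leq\Vert\textbf{T}(\sigma)-\textbf{T}(0)\Vert_{\infty}+\Vert\textbf{T}(0)\Vert_{\infty}<\frac{\zeta}{1-\lambda}\,r^{\prime}+\frac{\xi^{\prime}}{1-\lambda}\leq r^{\prime},\]
where the final inequality uses $\xi^{\prime}<r^{\prime}(1-\lambda-\zeta)$ from \eqref{dd}. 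Therefore $\textbf{T}(\Gamma_{r^{\prime}}(\mathcal{M}))\subseteq\Gamma_{r^{\prime}}(\mathcal{M})$, and the Banach fixed point theorem produces a unique fixed point $\sigma\in\Gamma_{r^{\prime}}(\mathcal{M})$ of $\textbf{T}$, which is the sought fixed point of $\textbf{G}|_{\Gamma_{r^{\prime}}(\mathcal{M})}$. The main obstacle is the second step: extracting the explicit bound $1/(1-\lambda)$ for $\Vert(I-\textbf{F})^{-1}\Vert$ from the hyperbolic splitting and confirming that it dovetails with the constants $\zeta$, $\xi^{\prime}$, $r^{\prime}$ fixed in \eqref{zeta}--\eqref{dd}, so that the contraction and the self-map properties hold simultaneously.
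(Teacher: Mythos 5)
Your proof is correct, but it takes a genuinely different route from the paper. The paper never inverts $I-\textbf{F}$; instead it builds an auxiliary operator $\tilde{\textbf{G}}(\sigma)=(\textbf{G}(\sigma))_{s}+(\textbf{F}^{-1}[\sigma_{u}+\textbf{F}(\sigma_{u})-(\textbf{G}(\sigma))_{u}])_{u}$ that treats the two summands of $\Gamma(\mathcal{M})=\Gamma^{s}(\mathcal{M})\oplus\Gamma^{u}(\mathcal{M})$ asymmetrically (keeping $\textbf{G}$ on the stable part, pulling the unstable part back by $\textbf{F}^{-1}$), shows its fixed points coincide with those of $\textbf{G}$, and proves directly that it is a $(\lambda+\zeta)$-contraction of $\Gamma_{r^{\prime}}(\mathcal{M})$ into itself. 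You instead use the full strength of Theorem \ref{characte} ($1\notin\sigma(\textbf{F})$, so $I-\textbf{F}$ is invertible) and contract $\textbf{T}=(I-\textbf{F})^{-1}\circ(\textbf{G}-\textbf{F})$, which is the classical Mather--Moser formulation of structural stability. The two arguments consume exactly the same inputs (Lemma \ref{lema44} for the splitting, Lemma \ref{lipchit} for $\mathrm{Lip}(\textbf{F}-\textbf{G})<\zeta$, and the estimate $\Vert\textbf{G}(0)\Vert_{\infty}<\xi^{\prime}$), and, pleasingly, the self-map conditions coincide: your requirement $\zeta r^{\prime}+\xi^{\prime}\leq r^{\prime}(1-\lambda)$ is literally the bound $\xi^{\prime}<r^{\prime}(1-\lambda-\zeta)$ imposed in \eqref{dd}, while your contraction constant $\zeta/(1-\lambda)$ replaces the paper's $\lambda+\zeta$; both are below $1$ precisely because $\zeta<1-\lambda$ in \eqref{zeta}. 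What your approach buys is conceptual economy and uniqueness of the fixed point in one stroke, at the price of having to justify the operator bound $\Vert(I-\textbf{F})^{-1}\Vert\leq 1/(1-\lambda)$; this is legitimate here because in the max-norm $\Vert\cdot\Vert_{\star}$ of \eqref{metriceq} the projections onto $\Gamma^{s}(\mathcal{M})$ and $\Gamma^{u}(\mathcal{M})$ have norm one and $\Vert\textbf{F}|_{\Gamma^{s}(\mathcal{M})}\Vert\leq\lambda$, $\Vert(\textbf{F}|_{\Gamma^{u}(\mathcal{M})})^{-1}\Vert\leq\lambda$ by strict Anosovness, so the Neumann series converge as you claim. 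The paper's version avoids any spectral input beyond the splitting itself, which is why it only invokes Lemma \ref{lema44} rather than the hyperbolicity of $\textbf{F}$ as an operator.
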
 
\begin{proof} Since $\Gamma (\mathcal{M}) = \Gamma ^{s}(\mathcal{M})  \oplus \Gamma ^{u}(\mathcal{M}) $, each $\sigma=(\sigma_{i})_{i\in\mathbb{Z}}\in   \Gamma_{r^{\prime}} (\mathcal{M})$ can be written as $\sigma= \sigma_{s}+\sigma_{u}$, where $\sigma_{s}=(\sigma_{i,s})_{i\in\mathbb{Z}}\in  \Gamma ^{s} (\mathcal{M})$ and $\sigma_{u}=(\sigma_{i,u})_{i\in\mathbb{Z}}\in  \Gamma^{u}(\mathcal{M}) $.  Let $ \tilde{\textbf{G}}$ be defined on $ \Gamma_{r^{\prime}} (\mathcal{M})$ as 
\[  \tilde{\textbf{G}}(\sigma)= (\textbf{G}(\sigma))_{s}+(\textbf{F}^{-1}[\sigma_{u}+\textbf{F}(\sigma_{u})  - (\textbf{G}(\sigma))_{u}])_{u}.\]
If $\sigma\in  {\Gamma}_{r^{\prime}}  (\mathcal{M})$ is a fixed point of $\tilde{\textbf{G}}$,  we have 
\( (\textbf{G}(\sigma))_{s}=\sigma_{s} \) and \( (\textbf{G}(\sigma))_{u}=\sigma_{u},\) 
that is, $\sigma$ is a fixed point of $\textbf{G}$. Therefore, in order to prove the lemma,  it is sufficient to find a fixed point of $\tilde{\textbf{G}}$. First we prove that $\tilde{\textbf{G}}$ is a contraction.  
 Let $\sigma=(\sigma_{i,s})_{i\in\mathbb{Z}}+(\sigma_{i,u})_{i\in\mathbb{Z}}$ and $\tilde{\sigma} =(\tilde{\sigma}_{i,s})_{i\in\mathbb{Z}}+(\tilde{\sigma}_{i,u})_{i\in\mathbb{Z}}$ in ${\Gamma}_{r^{\prime}}  (\mathcal{M}) $, where $\sigma_{s}, \tilde{\sigma_{s}}\in \Gamma ^{s}(\mathcal{M})$ and $\sigma_{u}, \tilde{\sigma_{u}}\in \Gamma ^{u}(\mathcal{M})$. Thus, we can prove that
 \begin{align*} \Vert  \tilde{\textbf{G}}(\sigma)_{i+1}-  \tilde{\textbf{G}} (\tilde{\sigma})_{i+1} \Vert_{\Gamma_{i+1}}  \leq  ({\lambda} +\zeta)\Vert \sigma -\tilde{\sigma}\Vert_{\infty}.
 \end{align*}
Hence, $\Vert  \tilde{\textbf{G}}(\sigma)-  \tilde{\textbf{G}} (\tilde{\sigma}) \Vert_{\infty}\leq ({\lambda} +\zeta)\Vert \sigma -\tilde{\sigma}\Vert_{\infty}.$ Since ${\lambda} +\zeta<1$ (see \eqref{zeta}), $\tilde{\textbf{G}}$ is a contraction. Now we prove that  $\tilde{\textbf{G}}({\Gamma}_{r^{\prime}}  (\mathcal{M}) )\subseteq {\Gamma}_{r^{\prime}}  (\mathcal{M}).$ If $0=(0_{i})_{i\in\mathbb{Z}}$ is the sequence of  the zero sections,   we have that 
\begin{align*}\Vert  \tilde{\textbf{G}}(\sigma)_{i+1}\Vert_{\Gamma_{i+1}} 
&\leq ({\lambda} +\zeta)\Vert \sigma \Vert_{\infty} + \max\{{\lambda}\Vert (\textbf{G}(0)_{u})_{i+1}  \Vert_{\Gamma_{i+1}},\Vert (\textbf{G}(0)_{s})_{i+1}\Vert _{\Gamma_{i+1}}\}, 
\end{align*}
thus $\Vert  \tilde{\textbf{G}}(\sigma)\Vert_{\infty}\leq ({\lambda} +\zeta)\Vert \sigma \Vert_{\infty}+ \Vert \textbf{G}(0) \Vert_{\infty}$. 
Now, for each $i\in \mathbb{Z}$, $p\in M_{i}$, we have 
\begin{align*} \Vert \textbf{G}_{i}(0_{i})(p) \Vert &= \Vert \text{exp}_{p}^{-1}\circ g_{i}\circ \text{exp}_{f_{i}^{-1}(p)}(0_{f_{i}^{-1}(p)} )\Vert =\Vert \text{exp}_{p}^{-1}( g_{i}  f_{i}^{-1}(p) )\Vert\\
&= d ( g_{i}  f_{i}^{-1}(p),p)= d ( g_{i}  f_{i}^{-1}(p),f_{i}  f_{i}^{-1}(p))<\xi^{\prime}
\end{align*}
Consequently, if $\sigma\in  {\Gamma}_{r^{\prime}}  (\mathcal{M}) $, then $\Vert  \tilde{\textbf{G}}(\sigma)\Vert_{\infty}<({\lambda} +\zeta)r^{\prime} +\xi^{\prime}<r^{\prime}, $ that is, $\tilde{\textbf{G}}(\sigma)\in  {\Gamma}_{r^{\prime}} (\mathcal{M}) $. Therefore, $ \tilde{\textbf{G}}$ has a fixed point in ${\Gamma}_{r^{\prime}} (\mathcal{M}) $.
\end{proof}

\begin{lem}\label{prmerlema1}   Let $p\in \textbf{M}$ and $v=(v_{s},v_{u}),w=(w_{s},w_{u})\in B^{s}(0,\tilde{r})\times B^{u}(0,\tilde{r})$. If $\Vert v_{s}-w_{s}\Vert \leq \Vert v_{u}-w_{u}\Vert  $, then 
\[\Vert (\tilde{g}_{p}(v))_{s}-(\tilde{g}_{p}(w))_{s}\Vert \leq (\eta^{-1}- \zeta) \Vert v_{u}- w_{u}\Vert\leq   \Vert (\tilde{g}_{p}(v))_{u}-(\tilde{g}_{p}(w))_{u}\Vert.\]
On the other hand, if $\Vert v_{u}-w_{u}\Vert  \leq \Vert v_{s}-w_{s}\Vert$, then 
\[ \Vert (\tilde{g}_{p}^{-1}(v))_{u}-(\tilde{g}_{p}^{-1}(w))_{u}\Vert \leq (\eta^{-1}- \zeta) \Vert v_{s}- w_{s}\Vert\leq   \Vert (\tilde{g}_{p}^{-1}(v))_{s}-(\tilde{g}_{p}^{-1}(w))_{s}\Vert.\] 
\end{lem}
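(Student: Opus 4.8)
The plan is to treat $\tilde g_p$ as a small $C^1$ perturbation of its linearization at the origin and to read off the two inequalities directly from the hyperbolic bounds of that linearization. Set $L:=D_0(\tilde f_p)$. Under the canonical identifications coming from the exponential charts one has $L=D_pf_i$, so $L$ preserves the splitting, $L(E^s_p)=E^s_{\mathcal F(p)}$ and $L(E^u_p)=E^u_{\mathcal F(p)}$; and since $\mathcal F$ is strictly Anosov for the norm fixed in this section (with constant $\lambda$), $L$ satisfies $\|L v_s\|\le\lambda\|v_s\|$ for $v_s\in E^s_p$ and $\|L v_u\|\ge\lambda^{-1}\|v_u\|$ for $v_u\in E^u_p$. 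By Proposition \ref{primeirolem} (whose constants $\tilde r$ and $\tilde\delta$ do not depend on the base point, so the estimate is available for every $p\in\mathcal M$ and not only for $p\in X_i$), for all $z\in B(0_p,\tilde r)$ we have $\|D_z(\tilde g_p)-L\|=\|D_z(D_0(\tilde f_p)-\tilde g_p)\|\le\sigma(\tilde r)$. First I would shrink $\tilde r$ and $\tilde\delta$ once more so that $\sigma(\tilde r)\le\min\{\zeta,\ \eta^{-1}-1\}$, which is compatible with all earlier choices.

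Next I would integrate the derivative along the segment joining $w$ to $v$, which stays inside the convex set $B^s(0,\tilde r)\times B^u(0,\tilde r)$, to write
\[
\tilde g_p(v)-\tilde g_p(w)=L(v-w)+E,\qquad E:=\int_0^1\big[D_{w+t(v-w)}(\tilde g_p)-L\big](v-w)\,dt,
\]
so that $\|E\|\le\sigma(\tilde r)\,\|v-w\|$. Putting $\Delta_s:=v_s-w_s$ and $\Delta_u:=v_u-w_u$ and projecting onto $E^s_{\mathcal F(p)}$ and $E^u_{\mathcal F(p)}$, the hypothesis $\|\Delta_s\|\le\|\Delta_u\|$ gives $\|v-w\|=\|\Delta_u\|$ in the box norm, whence $\|E_s\|\le\sigma(\tilde r)\|\Delta_u\|$ and $\|E_u\|\le\sigma(\tilde r)\|\Delta_u\|$.

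The two required inequalities then fall out. For the stable component,
\[
\|(\tilde g_p(v))_s-(\tilde g_p(w))_s\|\le\|L\Delta_s\|+\|E_s\|\le(\lambda+\sigma(\tilde r))\|\Delta_u\|\le(\eta^{-1}-\zeta)\|\Delta_u\|,
\]
where the last step uses $\lambda\le\eta<1$, $\zeta<\min\{1-\lambda,1-\eta\}$ and $\sigma(\tilde r)\le\eta^{-1}-1$. For the unstable component, since $L\Delta_u\in E^u_{\mathcal F(p)}$,
\[
\|(\tilde g_p(v))_u-(\tilde g_p(w))_u\|\ge\|L\Delta_u\|-\|E_u\|\ge(\lambda^{-1}-\sigma(\tilde r))\|\Delta_u\|\ge(\eta^{-1}-\zeta)\|\Delta_u\|,
\]
using $\lambda^{-1}\ge\eta^{-1}$ and $\sigma(\tilde r)\le\zeta$. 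Chaining the two displays gives the first assertion. The second assertion is proved by the same computation applied to $\tilde g_p^{-1}=\exp_p^{-1}\circ\mathcal G^{-1}\circ\exp_{\mathcal F(p)}$, whose linear part $L^{-1}=(D_pf_i)^{-1}$ contracts $E^u_{\mathcal F(p)}$ by $\lambda$ and expands $E^s_{\mathcal F(p)}$ by $\lambda^{-1}$, with the roles of the stable and unstable components interchanged and using the corresponding bound of Proposition \ref{primeirolem} on $B(0_{f_i(p)},\tilde r)$.

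The argument is elementary once the linearization is isolated; the only delicate point is the bookkeeping of constants. The hard part will be checking that $\sigma(\tilde r)$, which is at our disposal through $\tilde r$ and $\tilde\delta$, can indeed be forced below both $\zeta$ and $\eta^{-1}-1$ simultaneously while staying consistent with the constraints already imposed in Sections 5 and 7 — in particular with $\eta\in[\lambda,1)$ and with the definition \eqref{zeta} of $\zeta$ — and in verifying that the derivative estimate is uniform in $p$, so that the pointwise bound of Proposition \ref{primeirolem} applies along the whole segment.
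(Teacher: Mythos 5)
Your proof is correct and is essentially the argument the paper delegates to its citation of Shub's Lemma II.1: linearize $\tilde{g}_{p}$ at the origin, bound the error term by $\sigma(\tilde{r})$ via Proposition \ref{primeirolem} (correctly noting that its constants are independent of the base point, so the estimate is not restricted to $p\in X_{i}$), and chain the constants using $\zeta<1-\lambda$, $\lambda\leq\eta$ and the box norm $\Vert\cdot\Vert_{\star}$. The bookkeeping $(\lambda+\sigma)\leq\eta^{-1}-\zeta$ and $\lambda^{-1}-\sigma\geq\eta^{-1}-\zeta$ checks out under your additional (harmless) shrinking $\sigma(\tilde{r})\leq\min\{\zeta,\eta^{-1}-1\}$, so no gap remains.
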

\begin{proof}See \cite{Shub}, Lemma II.1. 
\end{proof}

Let $v=(v_{s},v_{u})\in B^{s}(0,\tilde{r})\times B^{u}(0,\tilde{r})$ and  $q=\text{exp}_{p}(v)$.  Suppose  that, for $n=\pm 1$, $d(g^{n}(p),g^{n}(q))< \tilde{r}$. Thus, if $\Vert v_{s} \Vert \leq \Vert v_{u}\Vert$, by Lemma   \ref{prmerlema1} we have
\begin{align*}d(q ,p)&=\Vert v\Vert \leq \Vert v_{s}\Vert+  \Vert v_{u}\Vert\leq 2 \Vert v_{u} \Vert\leq 2(\eta^{-1}-\zeta)^{-1}\Vert (\tilde{g}_{p}(v))_{u}-(\tilde{g}_{p}(0_{p}))_{u}\Vert\\
&  \leq 2(\eta^{-1}-\zeta)^{-1}[\Vert (\tilde{g}_{p}(v))_{u}-(\tilde{g}_{p}(0_{p}))_{u}\Vert+\Vert (\tilde{g}_{p}(v))_{s}-(\tilde{g}_{p}(0_{p}))_{s}\Vert]\\
&\leq 2\sqrt{2}(\eta^{-1}-\zeta) ^{-1}\Vert \tilde{g}_{p}(v) -\tilde{g}_{p}(0_{p})\Vert\leq   2\sqrt{2}(\eta^{-1}-\zeta)^{-1}\tilde{r}. 
\end{align*}
Analogously   if $\Vert v_{u} \Vert  \leq \Vert v_{s}\Vert$, then \(d(q ,p)\leq 2\sqrt{2}(\eta^{-1}-\zeta)^{-1}\tilde{r}.\)  Inductively, we can prove that: 
  
  \begin{propo}\label{propodelacota} For each $i\in\mathbb{Z}$, if  $p,q\in M_{i}$ and $d(g_{i}^{n}(p),g_{i}^{n}(q))<\tilde{r}$ for each $n\in [-N,N]$, then \[d(q ,p)\leq 2\sqrt{2}(\eta^{-1}-\zeta)^{-N}\tilde{r}.\] 
\end{propo}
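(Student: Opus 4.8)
The plan is to prove the statement by induction on $N$, the base case $N=1$ being exactly the one-step estimate carried out immediately before the statement. The inductive step rests on a \emph{persistence of dominance} that is built into Lemma \ref{prmerlema1}: writing $v=\text{exp}_p^{-1}(q)=(v_s,v_u)$, if the unstable component dominates, $\Vert v_s\Vert\leq\Vert v_u\Vert$, then applying the lemma with $w=0_p$ shows that the image displacement $z=\tilde{g}_p(v)-\tilde{g}_p(0_p)$ satisfies $\Vert z_s\Vert\leq(\eta^{-1}-\zeta)\Vert v_u\Vert\leq\Vert z_u\Vert$, so $z$ is again unstable-dominant, while $\Vert v_u\Vert\leq(\eta^{-1}-\zeta)^{-1}\Vert z_u\Vert$. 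Symmetrically, if $\Vert v_u\Vert\leq\Vert v_s\Vert$ then the displacement produced by $\tilde{g}_p^{-1}$ is again stable-dominant and $\Vert v_s\Vert\leq(\eta^{-1}-\zeta)^{-1}\Vert(\cdot)_s\Vert$. Thus whichever of the two alternatives holds at $p$ persists along the corresponding one-sided orbit segment, which is precisely what allows one to apply the single-step lemma in the same case at every index.

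I would then iterate. Set $p_n=g_i^{\,n}(p)$, $q_n=g_i^{\,n}(q)$ and $v^{(n)}=\text{exp}_{p_n}^{-1}(q_n)$; these are all well defined for $|n|\leq N$ because the hypothesis gives $d(p_n,q_n)<\tilde{r}<\varrho$. Assume first $\Vert v_s\Vert\leq\Vert v_u\Vert$. Persistence lets me invoke Lemma \ref{prmerlema1} in the unstable-dominant case at each of $p_0,\dots,p_{N-1}$, and chaining the resulting $N$ inequalities yields $\Vert v_u\Vert\leq(\eta^{-1}-\zeta)^{-N}\Vert v^{(N)}_u\Vert$. Bounding the terminal unstable displacement by the distance $d(p_N,q_N)<\tilde{r}$ — this is the single place where the norm comparison contributing the factor $\sqrt{2}$ enters, exactly as in the base case — and using $d(p,q)=\Vert v\Vert\leq\Vert v_s\Vert+\Vert v_u\Vert\leq 2\Vert v_u\Vert$, I obtain $d(p,q)\leq 2\sqrt{2}\,(\eta^{-1}-\zeta)^{-N}\tilde{r}$. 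The case $\Vert v_u\Vert\leq\Vert v_s\Vert$ is identical after iterating $\tilde{g}_p^{-1}$ backward along $p_0,p_{-1},\dots,p_{-N}$ and invoking $d(p_{-N},q_{-N})<\tilde{r}$, so the stated bound holds in both cases.

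The step I expect to be the main obstacle is the bookkeeping for the chart centers, since $\tilde{g}_{p_n}$ lands in the chart centered at $f(p_n)$ whereas the natural chart for the next iterate is centered at $g(p_n)=p_{n+1}$, and these centers differ by at most $\xi'$. I would handle this by measuring each image displacement directly as $\text{exp}_{f(p_n)}^{-1}(q_{n+1})-\text{exp}_{f(p_n)}^{-1}(p_{n+1})$ and observing that, since $g(p_n)$ lies within $\xi'<\varrho/2$ of $f(p_n)$, this quantity is comparable to $d(p_{n+1},q_{n+1})$ with distortion that is absorbed into $\zeta$ — which is exactly why Lemma \ref{prmerlema1} carries the relaxed rate $\eta^{-1}-\zeta$ rather than the sharp $\eta^{-1}$. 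Once this comparison is recorded, the transfer of dominance from $z$ to $v^{(n+1)}$ and the chaining become the purely geometric bookkeeping sketched above, producing the constant $2\sqrt{2}\,(\eta^{-1}-\zeta)^{-N}$.
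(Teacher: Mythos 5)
Your proposal is correct and follows essentially the same route as the paper: the paper proves only the one-step case (the displayed chain of inequalities preceding the proposition) and then simply asserts ``Inductively, we can prove that,'' and your induction via the persistence of the dominant component in Lemma \ref{prmerlema1} is exactly the intended iteration. If anything, you supply more detail than the paper does, in particular the chart-center bookkeeping, which the paper leaves entirely implicit.
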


 Finally, we have:
 
\begin{teo}\label{teofprlf} $ \mathcal{A}^{2}_{b}(\mathcal{M})$  is uniformly structurally stable in $ \mathcal{F}^{2}(\mathcal{M})$, that is, any  $\mathcal{F}\in \mathcal{A}^{2}_{b}(\mathcal{M})$  is uniformly
structurally stable.\end{teo}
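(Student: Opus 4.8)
The plan is to realize a uniform conjugacy as a fixed point of the operator already constructed in this section, and then to upgrade that fixed point to an equicontinuous homeomorphism using expansivity. Recall from Definition \ref{definconjugacy} that a uniform conjugacy $\mathcal{H}=(h_i)_{i\in\mathbb{Z}}$ from $\mathcal{F}$ to $\mathcal{G}$ must satisfy $h_{i+1}\circ f_i = g_i\circ h_i$, equivalently $h_i = g_{i-1}\circ h_{i-1}\circ f_{i-1}^{-1}$; that is, $\mathcal{H}$ is precisely a fixed point of the operator $\mathcal{G}$ of Lemma \ref{definiciong1}, which under the identification $\Phi$ corresponds to a fixed point of $\textbf{G}$. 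So first I would fix an arbitrary $\mathcal{G}=(g_i)_{i\in\mathbb{Z}}\in B^2(\mathcal{F},\xi^{\prime})$, with $\xi^{\prime}$ and $r^{\prime}$ as in \eqref{dd}, and invoke Lemma \ref{lemapuntofijo} to produce a fixed section, hence a sequence $(h_i)_{i\in\mathbb{Z}}\in\mathcal{D}(r^{\prime})$ of continuous maps with $h_{i+1}\circ f_i=g_i\circ h_i$ and $d(h_i(p),p)\le r^{\prime}$ for every $p\in M_i$ and every $i$.

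Next I would show each $h_i$ is a homeomorphism. Iterating the conjugacy gives $g_i^{\,n}\circ h_i = h_{i+n}\circ f_i^{\,n}$ for all $n$. If $h_i(p)=h_i(q)$, then $h_{i+n}(f_i^{\,n}(p))=h_{i+n}(f_i^{\,n}(q))$ for every $n\in\mathbb{Z}$; since each $h_{i+n}$ is within $r^{\prime}$ of the identity, this forces $d(f_i^{\,n}(p),f_i^{\,n}(q))\le 2r^{\prime}<\tilde r$ for all $n$, and expansivity of $\mathcal{F}$ — Proposition \ref{propodelacota} applied to $\mathcal{F}$ itself, letting $N\to\infty$ — yields $p=q$, so $h_i$ is injective. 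Being an injective continuous self-map of the compact connected manifold $M_i$, $h_i$ is open by invariance of domain, so $h_i(M_i)$ is open, compact, and hence all of $M_i$; thus $h_i$ is a homeomorphism. A short computation gives $d(h_i^{-1}(y),y)\le r^{\prime}$ as well, and $(h_i^{-1})$ satisfies $h_{i+1}^{-1}\circ g_i=f_i\circ h_i^{-1}$, i.e. it is the conjugacy in the opposite direction.

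It remains to prove that $(h_i)$ and $(h_i^{-1})$ are equicontinuous, which is the one genuinely non-stationary point of the argument and the step I expect to be the main obstacle. The idea is that every constant entering the estimates ($\tilde r$, $\eta$, $\zeta$, and $\sup_i\Vert Df_i\Vert$, $\sup_i\Vert Dg_i\Vert<\infty$, the latter finite because $\mathcal{G}\in\mathcal{A}^2_b(\mathcal{M})$ by Theorem \ref{teoprincipal1}) is uniform in $i$, so expansivity yields a modulus of continuity independent of $i$. Concretely, given $\varepsilon>0$ I would pick $N$ with $2\sqrt{2}(\eta^{-1}-\zeta)^{-N}\tilde r<\varepsilon$. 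For $p,q\in M_i$ and $y=h_i(p)$, $y'=h_i(q)$, the identity $g_i^{\,n}(y)=h_{i+n}(f_i^{\,n}(p))$ gives $d(g_i^{\,n}(y),g_i^{\,n}(y'))\le d(f_i^{\,n}(p),f_i^{\,n}(q))+2r^{\prime}$ for each $n$; using the uniform Lipschitz bound $d(f_i^{\,n}(p),f_i^{\,n}(q))\le (\sup_i\Vert Df_i\Vert)^{\,|n|}d(p,q)$ for $|n|\le N$ and choosing $\delta>0$ with $(\sup_i\Vert Df_i\Vert)^{N}\delta+2r^{\prime}<\tilde r$, one obtains $d(g_i^{\,n}(y),g_i^{\,n}(y'))<\tilde r$ for all $|n|\le N$ whenever $d(p,q)<\delta$, so Proposition \ref{propodelacota} forces $d(h_i(p),h_i(q))<\varepsilon$, uniformly in $i$. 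The symmetric estimate, interchanging the roles of $f$ and $g$ (using expansivity of $\mathcal{F}$ and the uniform Lipschitz bound on the $g_i$), yields equicontinuity of $(h_i^{-1})$. Collecting these facts, $\mathcal{H}=(h_i)_{i\in\mathbb{Z}}$ is a uniform conjugacy (Definition \ref{definconjugacy}) between $\mathcal{F}$ and $\mathcal{G}$; since $\mathcal{G}\in B^2(\mathcal{F},\xi^{\prime})$ was arbitrary, $\mathcal{F}$ is uniformly structurally stable in the sense of Definition \ref{estruturalmenteestavel}, as required.
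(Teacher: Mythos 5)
Your proof follows the paper's argument essentially verbatim: obtain $\mathcal{H}=(h_i)_{i\in\mathbb{Z}}$ as the fixed point supplied by Lemma \ref{lemapuntofijo}, then combine the expansivity estimate of Proposition \ref{propodelacota} with the uniform bounds on the derivatives to deduce injectivity and the equicontinuity of $(h_i)$ and $(h_i^{-1})$. The only substantive difference is that you also verify surjectivity of each $h_i$ via invariance of domain, a point the paper's proof leaves implicit, so your write-up is if anything slightly more complete.
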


\begin{proof} Take $(g_{i})_{i\in\mathbb{Z}}\in B^{2}(\mathcal{F}, \xi^{\prime})$. It follows from Lemma \ref{lemapuntofijo} that there exists $\mathcal{H}=(h_{i})_{i\in\mathbb{Z}}$, with $h_{i}\in D (I_{i},\tilde{r}/3)$ for each $i$, such that \( g_{i}\circ h_{i}=h_{i+1}\circ f_{i}\) for each \(i\in\mathbb{Z}.\)   
We will prove that $\mathcal{H}$ is equicontinuous and each $h_{i}$ is injective.   Let $\alpha>0$. Take $N>0$ such that    $2\sqrt{2}(\eta^{-1}-\zeta)^{-N}\tilde{r}<\alpha$. Since   $(f_{i})_{i\in\mathbb{Z}}$ is equicontinuous, the family of  sequences    $$ \{(f_{i}^{n})_{i\in\mathbb{Z}}: n\in [-N,N]\}$$ is equicontinuous. Consequently, there exists $\beta>0$ such that, for each $i\in\mathbb{Z}$, if $x,y\in M_{i}$ and $d(x,y)<\beta,$ then $d(f_{i}^{n}(x),f_{i}^{n}(y))<\tilde{r}/3$ for any $n\in [-N,N]$. Hence, for each $i\in\mathbb{Z}$ and $n\in [-N,N]$,  if $x,y\in M_{i}$ and $d(x,y)<\beta,$ then
\begin{align*} d(g_{i}^{n}\circ h_{i}(x),g_{i}^{n}\circ h_{i}(y)) &\leq d(g_{i}^{n}\circ h_{i}(x),f_{i}^{n}(x))+d(f_{i}^{n}(x),f_{i}^{n}(y))+d(f_{i}^{n}(y),g_{i}^{n}\circ h_{i}(y)) \\
&=d(h_{i+n}\circ f_{i}^{n}(x),f_{i}^{n}(x))+d(f_{i}^{n}(x),f_{i}^{n}(y))+d(f_{i}^{n}(y),h_{i+n}\circ f_{i}^{n}(y)) \\  
&\leq \tilde{r}/3+\tilde{r}/3 +\tilde{r}/3=  \tilde{r}.
\end{align*}
It follows from Proposition \ref{propodelacota} that  $d(h_{i}(x),h_{i}(y))<\alpha$. This fact proves that $(h_{i})_{i\in\mathbb{Z}}$ is an equicontinuous family. Note that if $h_{i}(x)=h_{i}(y)$ for some $x,y\in M_{i}$, then $d(g_{i}^{n}\circ h_{i}(x),g_{i}^{n}\circ h_{i}(y))< \tilde{r} $ for any $n\in\mathbb{Z}$. Thus  $x=y$ and therefore $h_{i}$ is injective.     Analogously we can prove that $(h_{i}^{-1})_{i\in\mathbb{Z}}$ is equicontinuous. Consequently, $\mathcal{A}^{2}_{b}(\mathcal{M})$ is uniformly structurally stable in $\mathcal{D}^{2}(\mathcal{M})$. 
\end{proof}

It follows directly from Theorems \ref{teoprincipal1}  and   \ref{teofprlf} that if $\mathcal{F}=(f_{i})_{i\in\mathbb{Z}}$ is an Anosov family satisfying the property of angles consisting of a finite sequence of diffeomorphisms, there exists $\xi>0$ such that, if $\mathcal{G}\in  B^{2}(\mathcal{F}, \xi)$, then  $\mathcal{G}\in\mathcal{A}^{2}(\mathcal{M})$ and is uniformly conjugate to $\mathcal{F}.$



\begin{thebibliography}{20}
\bibitem{alb} P. Arnoux and A. M. Fisher. ``Anosov families, renormalization and non-stationary subshifts''. \textit{Ergodic Theory and Dynamical Systems}, 25(3): 661-709, 2005. 
\bibitem{Acastro} A. Castro,  F.  Rodrigues and P. Varandas. ``Stability and limit theorems for sequences of uniformly hyperbolic
dynamics”. arXiv preprint arXiv:1709.01652, (2017).
\bibitem{Kawan} C. Kawan. ``Entropy of nonautonomous dynamical systems''. arXiv preprint arXiv:1708.00815. (2017).
\bibitem{KawanL} C. Kawan and Y. Latushkin. ``Some results on the entropy of nonautonomous dynamical systems''. Dynamical
systems. 31(3): (2016), 251-279.
\bibitem{TKato} T. Kato. Perturbation theory for linear operators. Vol. 132. Springer Science \& Business Media, 2013.
\bibitem{Kolyada}   Kolyada, Sergiy and Lubom\'ir  Snoha. ``Topologial entropy of nonautonomous dynamical systems''. \textit{Random Comput. Dynamics} 4, No. 2-3:   205-233, 1996.
\bibitem{Kolyada2}  S. Kolyada, M. Misiurewicz, L. Snoha. Topological entropy of nonautonomous piecewise monotone dynamical systems on the interval. \textit{Fund.
Math.} 160 (1999), 161-181. 
\bibitem{Liu}  Pei-Dong Liu. ``Random perturbations of Axiom A basic sets''.  \textit{Journal of statistical physics}, 90(1-2):   467-490, 1998.
\bibitem{Liu2}  Pei-Dong Liu and Min Qian.  \textit{Smooth Ergodic Theory of Random Dynamical Systems}. (Lecture Notes in
Mathematics 1606). Springer, 2006.
\bibitem{Mather1} J. Mather. ``Anosov Diffeomorphism”, appendix to part I of: S. Smale, Differentiable Dynamical Systems, Bulletin
of the Amer. Math. Soc., 73, (1967) 792-795.
\bibitem{Mather12} J. Mather. ``Characterization of Anosov diffeomorphisms''. Indag. Math 30.5 (1968): 479-483.
\bibitem{Jeo2} Acevedo, Jeovanny de Jesus Muentes. ``Local stable and unstable manifolds for Anosov families.'' \textit{Hokkaido Mathematical Journal} 48.3 (2019): 513-535.
\bibitem{Jeo3}  J. Muentes. ``On the continuity of the topological entropy of 
non-autonomous dynamical systems''. \textit{Bulletin of the Brazilian Mathematical Society, New Series},  49(1): 89-106, 2017.
\bibitem{Jeo1} J. Muentes. ``Openness of  Anosov families''. \textit{Journal of the Korean Mathematical Society}, 55(3): 575-591, 2018.
\bibitem{Shub} Michael Shub. \textit{Global stability of Dynamical Systems}. Springer-Verlag, (1987).
\bibitem{Swan} R. Swan. ``Vector bundles and projective modules''. Transactions of the American Mathematical Society, 105, 264-
277 (1962).
\bibitem{Viana} Marcelo Viana.   \textit{Lectures on Lyapunov exponents}. Vol. 145. Cambridge University Press, 2014.
\bibitem{young} Lai-Sang Young. ``Stochastic stability of hyperbolic attractors''.  \textit{Ergod. Th. and Dynam. Sys}, 6(2): 311-319, 1986.
\end{thebibliography}
\end{document}